\begin{document}
\theoremstyle{plain}
\newtheorem{MainThm}{Theorem}
\newtheorem{thm}{Theorem}[section]
\newtheorem{clry}[thm]{Corollary}
\newtheorem{prop}[thm]{Proposition}
\newtheorem{lemma}[thm]{Lemma}
\newtheorem{deft}[thm]{Definition}
\newtheorem{hyp}{Assumption}
\newtheorem*{conjecture}{Conjecture}
\newtheorem{question}{Question}

\newtheorem{claim}[thm]{Claim}

\theoremstyle{definition}
\newtheorem*{definition}{Definition}
\newtheorem{assumption}{Assumption}
\newtheorem{rem}[thm]{Remark}
\newtheorem*{remark}{Remark}
\newtheorem*{acknow}{Acknowledgements}

\newtheorem{example}[thm]{Example}
\newtheorem*{examplenonum}{Example}
\numberwithin{equation}{section}
\newcommand{\nocontentsline}[3]{}
\newcommand{\tocless}[2]{\bgroup\let\addcontentsline=\nocontentsline#1{#2}\egroup}
\newcommand{\eps}{\varepsilon}
\renewcommand{\phi}{\varphi}
\renewcommand{\d}{\partial}
\newcommand{\re}{\mathop{\rm Re} }
\newcommand{\im}{\mathop{\rm Im}}
\newcommand{\mR}{\mathbb{R}}
\newcommand{\mC}{\mathbb{C}}
\newcommand{\mN}{\mathbb{N}} 
\newcommand{\mZ}{\mathbb{Z}} 
\newcommand{\mK}{\mathbb{K}}
\newcommand{\supp}{\mathop{\rm supp}}
\newcommand{\abs}[1]{\lvert #1 \rvert}
\newcommand{\norm}[1]{\lVert #1 \rVert}
\newcommand{\csubset}{\Subset}
\newcommand{\detg}{\lvert g \rvert}
\newcommand{\msetminus}{\setminus}

\newcommand{\br}[1]{\langle #1 \rangle}

\newcommand{\ehat}{\,\hat{\rule{0pt}{6pt}}\,}
\newcommand{\echeck}{\,\check{\rule{0pt}{6pt}}\,}
\newcommand{\etilde}{\,\tilde{\rule{0pt}{6pt}}\,}

\newcommand{\tr}{\mathrm{tr}}
\newcommand{\mdiv}{\mathrm{div}}

\title[Calder\'on problem with partial data]{Recent progress in the Calder\'on problem with partial data}

\author{Carlos Kenig}
\address{Department of Mathematics, University of Chicago}
\email{cek@math.uchicago.edu}

\author{Mikko Salo}
\address{Department of Mathematics and Statistics, University of Jyv\"askyl\"a}
\email{mikko.j.salo@jyu.fi}


\begin{abstract}
We survey recent results on Calder\'on's inverse problem with partial data, focusing on three and higher dimensions.
\end{abstract}

\maketitle

\tableofcontents

\section{Introduction} \label{sec_intro}

The Calder\'on problem with partial data asks to determine the electrical conductivity of a body from electrical measurements made on certain subsets of the boundary. This is a fundamental inverse problem, also mentioned as an open question in Gunther Uhlmann's ICM address \cite{U_ICM}. Subsequent years have seen several advances in partial data problems, many of them due to Gunther Uhlmann and his collaborators, and it is a pleasure for us to survey some of these developments in this volume in honor of Gunther's remarkable career.

Let us give the formal statement of the problem. If $\Omega \subset \mR^n$, $n \geq 2$, is a bounded domain with $C^{\infty}$ boundary, if $\gamma \in L^{\infty}(\Omega)$ is a positive function (the electrical conductivity of the medium), and if $\Gamma_D, \Gamma_N$ are open subsets of $\partial \Omega$, we consider the partial Cauchy data set 
\begin{align*}
C_{\gamma}^{\Gamma_D,\Gamma_N} = \{ (u|_{\Gamma_D}, \gamma \partial_{\nu} u|_{\Gamma_N}) &\,;\, \mdiv(\gamma \nabla u) = 0 \text{ in } \Omega, \ u \in H^1(\Omega), \\
 & \quad \supp(u|_{\partial \Omega}) \subset \Gamma_D \}.
\end{align*}
In the Calder\'on problem with partial data, the objective is to determine the conductivity $\gamma$ from the knowledge of $C_{\gamma}^{\Gamma_D, \Gamma_N}$ for given (possibly very small) sets $\Gamma_D, \Gamma_N$. Here $\partial_{\nu}$ is the normal derivative, and the conormal derivative $\gamma \partial_{\nu} u|_{\partial \Omega}$ is interpreted in the weak sense as an element of $H^{-1/2}(\partial \Omega)$. Thus we have 
$$
C_{\gamma}^{\Gamma_D,\Gamma_N} \subset H^{1/2}(\Gamma_D) \times H^{-1/2}(\Gamma_N).
$$

It is very useful to consider the related inverse boundary value problem for the Schr\"odinger equation with partial data. There, the objective is to determine a potential $q \in L^{\infty}(\Omega)$ from the partial Cauchy data set 
\begin{align*}
C_{q}^{\Gamma_D,\Gamma_N} = \{ (u|_{\Gamma_D}, \partial_{\nu} u|_{\Gamma_N}) &\,;\, (-\Delta+q) = 0 \text{ in } \Omega, \ u \in H_{\Delta}(\Omega), \\
 & \quad \supp(u|_{\partial \Omega}) \subset \Gamma_D \}.
\end{align*}
Here, the space $H_{\Delta}(\Omega)$ is defined by 
$$
H_{\Delta}(\Omega) = \{ u \in L^2(\Omega) \,;\, \Delta u \in L^2(\Omega) \}.
$$
It is known (see \cite{BU}) that for $u$ in this space, the trace $u|_{\partial \Omega}$ and normal derivative $\partial_{\nu} u|_{\partial \Omega}$ are in $H^{-1/2}(\partial \Omega)$ and $H^{-3/2}(\partial \Omega)$, respectively. Therefore, 
$$
C_{q}^{\Gamma_D,\Gamma_N} \subset H^{-1/2}(\Gamma_D) \times H^{-3/2}(\Gamma_N).
$$

We formulate the relevant partial data problems as follows:

\begin{question}
Let $\Gamma_D, \Gamma_N$ be open subsets of $\partial \Omega$ and let $\gamma_1, \gamma_2 \in L^{\infty}(\Omega)$. If 
$$
C_{\gamma_1}^{\Gamma_D,\Gamma_N} = C_{\gamma_2}^{\Gamma_D,\Gamma_N},
$$
is it true that $\gamma_1 = \gamma_2$?
\end{question}

\begin{question}
Let $\Gamma_D, \Gamma_N$ be open subsets of $\partial \Omega$ and let $q_1, q_2 \in L^{\infty}(\Omega)$. If 
$$
C_{q_1}^{\Gamma_D,\Gamma_N} = C_{q_2}^{\Gamma_D,\Gamma_N},
$$
is it true that $q_1 = q_2$?
\end{question}

In both problems above, we think of $u|_{\partial \Omega}$ as Dirichlet data (voltage) prescribed only on $\Gamma_D$, and we assume that one can measure the Neumann data of the corresponding solution (outgoing current) on $\Gamma_N$. The Cauchy data set is determined by the Dirichlet-to-Neumann map (DN map) $\Lambda_{\gamma}: H^{1/2}(\partial \Omega) \to H^{-1/2}(\partial \Omega)$, defined by  
$$
\Lambda_{\gamma}: u|_{\partial \Omega} \mapsto \gamma \partial_{\nu} u|_{\partial \Omega} \ \ \text{where $u \in H^1(\Omega)$ solves $\mdiv(\gamma \nabla u) = 0$ in $\Omega$}.
$$
The partial Cauchy data set is obtained from the graph of $\Lambda_{\gamma}$ as 
$$
C_{\gamma}^{\Gamma_D,\Gamma_N} = \{ (f|_{\Gamma_D}, \Lambda_{\gamma} f|_{\Gamma_N}) \,;\, f \in H^{1/2}(\partial \Omega), \ \supp(f) \subset \Gamma_D \}.
$$
Also $C_{q}^{\Gamma_D,\Gamma_N}$ is obtained by restricting the graph of the DN map $\Lambda_q: u|_{\partial \Omega} \mapsto \partial_{\nu} u|_{\partial \Omega}$, where $(-\Delta+q)u = 0$ in $\Omega$, provided that $0$ is not a Dirichlet eigenvalue of $-\Delta+q$ in $\Omega$.

One can think of three subcases of the above problems:
\begin{itemize}
\item
\emph{Full data}: $\Gamma_D = \Gamma_N = \partial \Omega$.
\item
\emph{Local data}: $\Gamma_D = \Gamma_N = \Gamma$, where $\Gamma$ can be any nonempty open subset of $\partial \Omega$.
\item
\emph{Data on disjoint sets}: $\Gamma_D$ and $\Gamma_N$ are disjoint open sets.
\end{itemize}

The most complete results are of course available for the full data case. Moreover, virtually all known results involve some version of the method of complex geometrical optics (CGO) solutions. This method has its origin in the works \cite{Faddeev}, \cite{C}, and major results for full data inverse boundary value problems based on the CGO method are \cite{SU}, \cite{HT} in dimensions $n \geq 3$ and \cite{N_2D}, \cite{AP}, \cite{Bu} in the case $n=2$. In particular, it has been proved that the set $C_{\gamma}^{\partial \Omega, \partial \Omega}$ determines uniquely a conductivity $\gamma \in C^1(\overline{\Omega})$ if $n \geq 3$ and a conductivity $\gamma \in L^{\infty}(\Omega)$ if $n = 2$. See the survey \cite{U_IP} for further references.

For the partial data question where the sets $\Gamma_D$ or $\Gamma_N$ may not be the whole boundary, we mention here four main approaches. Formulated in terms of the Schr\"odinger problem, it is known that $C_q^{\Gamma_D,\Gamma_N}$ determines $q$ in $\Omega$ in the following cases:
\begin{enumerate}
\item
$n \geq 3$, the set $\Gamma_D$ is possibly very small, and $\Gamma_N$ is slightly larger than $\partial \Omega \setminus \Gamma_D$ (Kenig, Sj\"ostrand, and Uhlmann \cite{KSU}) \\
\item
$n \geq 3$ and $\Gamma_D = \Gamma_N = \Gamma$, and $\partial \Omega \setminus \Gamma$ is either part of a hyperplane or part of a sphere (Isakov \cite{I}) \\
\item
$n = 2$ and $\Gamma_D = \Gamma_N = \Gamma$, where $\Gamma$ can be an arbitrary open subset of $\partial \Omega$ (Imanuvilov, Uhlmann, and Yamamoto \cite{IUY}) \\
\item
$n \geq 2$, linearized partial data problem, $\Gamma_D = \Gamma_N = \Gamma$ where $\Gamma$ can be an arbitrary open subset of $\partial \Omega$ (Dos Santos, Kenig, Sj\"ostrand, and Uhlmann \cite{DKSjU09})
\end{enumerate}

Here approach (1) requires roughly complementary sets $\Gamma_D$ and $\Gamma_N$, whereas approaches (2)--(4) deal with the local data problem. Approaches (1)--(3) also give a partial data result for the conductivity equation with the same assumptions on the dimension and the sets $\Gamma_D, \Gamma_N$. In (4), the linearized partial data problem asks to show injectivity of the Fr\'echet derivative of $\Lambda_q$ at $q=0$ instead of injectivity of the full map $q \mapsto \Lambda_q$, when restricted to the sets $\Gamma_D$ and $\Gamma_N$.

It is interesting that each of the four approaches is based on a version of CGO solutions, but still the approaches are distinct in the sense that none of the above results is contained in any of the others. Approach (1) is based on Carleman estimates with boundary terms for limiting Carleman weights, whereas approach (2) is based on reflection arguments and the full data methods of \cite{SU}. Approach (3) uses limiting Carleman weights with critical points and stationary phase, but involves complex analysis and is therefore restricted to two dimensions. Approach (4) is based on analytic microlocal analysis but so far only works for the linearized case.

In a recent work \cite{KSa}, we unified the approaches (1) and (2) and extended both of them, giving new partial data results in dimensions $n \geq 3$ also on certain Riemannian manifolds. Below, we will explain the approaches (1) and (2) from the new perspective obtained from \cite{KSa}, and we will also give detailed proofs of certain partial data results in \cite{KSa} restricting to the special case of Euclidean domains. We hope that the present treatment will be more accessible to readers familiar with Euclidean analysis than the geometric paper \cite{KSa}. Approaches (3) and (4) give rather final results in the local data problem for $n=2$ and for the linearized case. We refer to the recent survey \cite{GT_survey} on two-dimensional partial data problems for more details on approach (3). Approach (4) will be discussed in Section \ref{sec_linearized}, and some open problems are listed in Section \ref{sec_open}.

We list here some further references for partial data results, first for the case $n \geq 3$. The Carleman estimate approach was initiated in \cite{BU} and \cite{KSU}. Based on this approach, there are low regularity results \cite{Knudsen}, \cite{Zhang}, results for other scalar equations \cite{DKSjU07}, \cite{KnudsenSalo}, \cite{Chung}, \cite{Chung_ND} and systems \cite{SaloTzou}, stability results \cite{HeckWang}, \cite{HeckWang2}, \cite{Tzou}, \cite{CDR}, and reconstruction results \cite{NS}. The reflection approach was introduced in \cite{I}, and has been employed for the Maxwell system \cite{COS}, \cite{Caro_stability}. Partial data results for slab geometries are given in \cite{LiUhlmann}, \cite{KLU}. A result analogous to Theorem \ref{thm_ks1} was independently obtained in \cite{IY_3D}.

In two dimensions, the main partial data result is \cite{IUY} which has been extended to more general equations \cite{IUY_general}, combinations of measurements on disjoint sets \cite{IUY_disjoint}, less regular coefficients \cite{IY_nonsmooth}, and some systems \cite{IY_systems}. An earlier result is in \cite{ALP}. In the case of Riemann surfaces with boundary, corresponding partial data results are given in \cite{GT}, \cite{GT_magnetic}, \cite{GT_system}.

For piecewise analytic conductivities, uniqueness in the local data problem follows from boundary determination results \cite{KV}, \cite{KV2}. See \cite{Gebauer} for a related construction. An early result for the linearized problem in an annular domain in $\mR^2$ with no measurements on the inner boundary is in \cite{H98}. In the case when the conductivity is known near the boundary, the partial data problem can be reduced to the full data problem \cite{I88}, \cite{AU}, \cite{Fathallah}, \cite{Benjoud}, \cite{Alessandrini_partial}, \cite{HPS}. Also, we remark that in the corresponding problem for the wave equation, it has been known for a long time (see \cite{KKL}) that measuring the Dirichlet and Neumann data of waves on an arbitrary open subset of the boundary is sufficient to determine the coefficients uniquely up to natural gauge transforms. Recent partial results for the case where Dirichlet and Neumann data are measured on disjoint sets are in \cite{LO1}, \cite{LO2}.

The paper is structured as follows. Section \ref{sec_intro} is the introduction. In Section \ref{sec_results} we give precise statements for the various partial data results in the literature, concentrating on the Schr\"odinger problem. Section \ref{sec_strategy} describes the main strategy for proving most of these results, by reducing to a density statement for products of solutions that vanish on parts of the boundary. A Carleman estimate with boundary terms relevant for partial data results is proved in Section \ref{sec_carleman}. This is a special case of the corresponding estimate in \cite{KSa}, but we give self-contained proofs in the Euclidean case. Section \ref{sec_cgo} discusses the construction of CGO solutions vanishing on parts of the boundary, and Section \ref{sec_uniqueness} explains the corresponding uniqueness results relying on the injectivity of a mixed Fourier transform/local attenuated geodesic ray transform of the unknown coefficient. The linearized problem is considered in Section \ref{sec_linearized}, and Section \ref{sec_open} lists open questions related to the partial data problem.


\bigskip

\noindent {\bf Acknowledgements.} \ 
C.K.~is partly supported by NSF, and M.S.~is supported in part by the Academy of Finland and an ERC Starting Grant.

\section{Partial data results} \label{sec_results}

In this section we give precise statements of the partial data results mentioned in the introduction. We will only consider the Schr\"odinger problem, since the conductivity problem can be reduced to that case by the well known relation  
\begin{gather*}
\Lambda_{q_{\gamma}} f = \gamma^{-1/2} \Lambda_{\gamma}(\gamma^{-1/2} f) + \frac{1}{2} \gamma^{-1} (\partial_{\nu} \gamma) f, \qquad q_{\gamma} = \frac{\Delta(\gamma^{1/2})}{\gamma^{1/2}}.
\end{gather*}
This reduction works if the conductivities are sufficiently regular (one may additionally need to assume that the two conductivities agree on part of the boundary).

It will be useful to introduce some notation. In all results below, we assume that $\Omega \subset \mR^n$ is a bounded open connected set with $C^{\infty}$ boundary. If $\phi \in C^{\infty}(\overline{\Omega})$ is a real valued function, we can write the boundary of $\Omega$ as the disjoint union 
$$
\partial \Omega = \partial \Omega_+ \cup \partial \Omega_- \cup \partial \Omega_{\rm tan}
$$
where 
\begin{gather*}
\partial \Omega_{\pm} = \partial \Omega_{\pm}(\phi) = \{ x \in \partial \Omega \,;\, \pm \nabla \phi(x) \cdot \nu(x) > 0 \}, \\
\partial \Omega_{\rm tan} = \partial \Omega_{\rm tan}(\phi) = \{ x \in \partial \Omega \,;\, \nabla \phi(x) \cdot \nu(x) = 0 \}.
\end{gather*}
Here $\nu$ is the outer unit normal of $\partial \Omega$.

The functions $\phi$ that can be used in partial data results are typically \emph{limiting Carleman weights}. This is a special class of functions, introduced in \cite{KSU}, that coincides with the set of harmonic functions (with some restriction on their critical points) if $n=2$, and if $n \geq 3$ it consists of the following six functions up to translation and scaling:
\begin{gather*}
\alpha \cdot x, \ \ \log \,\abs{x}, \ \ \frac{\alpha \cdot x}{\abs{x}^2}, \ \ \text{arg}((\alpha + i\beta) \cdot x), \\
\text{arg}(e^{i\theta}(x+i\xi)^2), \ \ \log \frac{\abs{x+\xi}^2}{\abs{x-\xi}^2}.
\end{gather*}
Here $\alpha$ and $\beta$ are orthogonal unit vectors, $\theta \in [0,2\pi)$, $\xi \in \mR^n \setminus \{ 0 \}$, and the argument function is defined by 
$$
\text{arg}(z) = 2 \arctan \frac{\im(z)}{\abs{z}+\re(z)}, \quad z \in \mC \setminus (-\infty,0].
$$
We refer to \cite{DKSaU} for more information and a thorough analysis of limiting Carleman weights that have no critical points.

It is suggested by the methods of \cite{KSa} that for a fixed limiting Carleman weight $\phi$, measuring Neumann data on $\partial \Omega_+(\phi)$ for Dirichlet data input of $\partial \Omega_-(\phi)$, with no measurements required on $\partial \Omega_{\rm tan}(\phi)$, might be sufficient for determining the unknown coefficients (and this should also hold with $\partial \Omega_+(\phi)$ and $\partial \Omega_-(\phi)$ interchanged). All results described below can be understood in light of this idea, but most of them require measurements on $\partial \Omega_{\pm}(\phi)$ for a large family of different $\phi$'s instead of just one $\phi$.

\bigskip

\noindent 2.1. {\bf The result of \cite{KSU}.}
This result is stated in terms of the front and back faces of $\partial \Omega$ with respect to some point $x_0$ which is outside the convex hull ${\rm ch}(\overline{\Omega})$ of $\overline{\Omega}$. Note that if $\Omega$ is strictly convex, the front face can be made arbitrarily small by placing $x_0$ close to the boundary, but in this case the back face will be very large.

\begin{thm} \label{thm_ksu1}
Let $\Omega \subset \mR^n$, $n \geq 3$. If $x_0 \in \mR^n \setminus {\rm ch}(\overline{\Omega})$, define the front and back face of $\partial \Omega$ by 
\begin{gather*}
F(x_0) = \{ x \in \partial \Omega \,;\, (x-x_0) \cdot \nu(x) \leq 0 \}, \\
B(x_0) = \{ x \in \partial \Omega \,;\, (x-x_0) \cdot \nu(x) \geq 0 \}.
\end{gather*}
Let $\Gamma_D, \Gamma_N$ be open subsets of $\partial \Omega$ with $F(x_0) \subset \Gamma_D$ and $B(x_0) \subset \Gamma_N$. If $q_1, q_2 \in L^{\infty}(\Omega)$ and if 
$$
C_{q_1}^{\Gamma_D,\Gamma_N} = C_{q_2}^{\Gamma_D,\Gamma_N},
$$
then $q_1 = q_2$.
\end{thm}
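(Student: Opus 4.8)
The plan is to prove Theorem~\ref{thm_ksu1} via the standard CGO strategy: reduce the uniqueness question to a density statement for products of solutions, construct complex geometrical optics solutions adapted to a limiting Carleman weight, and use a Carleman estimate with boundary terms to control the parts of the solutions living on the unmeasured portion of the boundary. The key observation is that for a point $x_0$ outside the convex hull of $\overline{\Omega}$, the function
\begin{equation*}
\phi(x) = \log\abs{x-x_0}
\end{equation*}
is a limiting Carleman weight on $\overline{\Omega}$, and with respect to this $\phi$ the front face $F(x_0)$ is exactly the set where $(x-x_0)\cdot\nu(x)\leq 0$, i.e.\ the closure of $\partial\Omega_-(\phi)$ together with $\partial\Omega_{\mathrm{tan}}(\phi)$, while the back face $B(x_0)$ contains $\partial\Omega_+(\phi)$. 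Thus the hypotheses $F(x_0)\subset\Gamma_D$ and $B(x_0)\subset\Gamma_N$ say precisely that we prescribe Dirichlet data on a set containing $\partial\Omega_-$ and measure Neumann data on a set containing $\partial\Omega_+$, which is the configuration the Carleman estimate is built to exploit.

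First I would set $q = q_1 - q_2$ and, using the equality of the Cauchy data sets together with an integration-by-parts (Green's) identity, derive the orthogonality relation $\int_\Omega q\, u_1 u_2 \, dx = 0$, where $u_1$ solves $(-\Delta+q_1)u_1=0$ and $u_2$ solves $(-\Delta+q_2)u_2=0$, subject to the constraint that the boundary terms arising on the unmeasured faces can be made to vanish in the limit. Next I would construct CGO solutions of the form
\begin{equation*}
u_j = e^{\tau(\phi + i\psi)}(a_j + r_j), \qquad j=1,2,
\end{equation*}
where $\psi$ is a phase conjugate to $\phi$ (so that $\phi+i\psi$ solves the eikonal equation associated to the Laplacian), $a_j$ are suitable amplitudes, and $r_j$ are correction terms that are $o(1)$ as the large parameter $\tau\to\infty$. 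The Carleman estimate with boundary terms from Section~\ref{sec_carleman} is what permits the construction of $r_j$ vanishing on the appropriate face and simultaneously controls the boundary integrals so that, after inserting the $u_j$ into the orthogonality relation and letting $\tau\to\infty$, one is left with a transform of $q$ equal to zero.

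The outcome of letting $\tau\to\infty$ is not the full Fourier transform of $q$ but rather a mixed Fourier/ray-transform along the level-set geometry of the weight $\phi=\log\abs{x-x_0}$; for the logarithmic weight these level sets are spheres centered at $x_0$, and the phases can be chosen so that the limit recovers enough integrals of $q$ to conclude $q\equiv 0$. Concretely, one varies $x_0$ over an open subset of $\mR^n\setminus\mathrm{ch}(\overline\Omega)$ and combines the resulting family of transforms; the injectivity of this combined transform is what forces $q_1=q_2$. The main obstacle, and the genuinely hard part, is the boundary-term analysis: one must show that the contributions of the CGO solutions on $\partial\Omega_{\mathrm{tan}}(\phi)$ and on the unmeasured part of the boundary are negligible in the limit. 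This is exactly where the Carleman estimate must carry a favorably-signed boundary term on $\partial\Omega_+$ and $\partial\Omega_-$, absorbing the uncontrolled pieces; handling the degenerate set $\partial\Omega_{\mathrm{tan}}$, where $\nabla\phi\cdot\nu$ vanishes and the weight provides no coercivity, requires the most care and is the technical heart of the argument.
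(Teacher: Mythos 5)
Your overall plan is the right one and matches how the paper frames Theorem \ref{thm_ksu1}: the weight is $\phi(x)=\log\abs{x-x_0'}$ with $x_0'$ ranging over a neighborhood of $x_0$, and with this choice $F(x_0)=\partial \Omega_-(\phi)\cup\partial \Omega_{\rm tan}(\phi)$ and $B(x_0)=\partial \Omega_+(\phi)\cup\partial \Omega_{\rm tan}(\phi)$, so the hypotheses are exactly the configuration $\partial \Omega_-\cup\partial \Omega_{\rm tan}\subset\Gamma_D$, $\partial \Omega_+\cup\partial \Omega_{\rm tan}\subset\Gamma_N$ that the Carleman machinery exploits (the paper itself only sketches this and refers to \cite{KSU} for the proof). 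Two problems, one a slip and one a genuine gap. The slip: you write both solutions as $u_j=e^{\tau(\phi+i\psi)}(a_j+r_j)$ with the \emph{same} sign in the exponent. They must carry opposite signs, $u_1=e^{-\tau\phi}(\cdots)$ and $u_2=e^{\tau\phi}(\cdots)$; otherwise $u_1u_2$ contains $e^{2\tau\phi}$ and has no limit as $\tau\to\infty$. The opposite signs are also what make the two solutions controllable on \emph{different} faces of the boundary, which is needed to arrange $\supp(u_1|_{\partial\Omega})\subset\Gamma_D$ and $\supp(u_2|_{\partial\Omega})\subset\Gamma_N$ simultaneously in the integral identity of Lemma \ref{prop_integralidentity}.

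The genuine gap is the final step. You write that ``the phases can be chosen so that the limit recovers enough integrals of $q$'' and that ``the injectivity of this combined transform is what forces $q_1=q_2$,'' but this injectivity is precisely the hard part of \cite{KSU} and you give no argument for it. For the logarithmic weight the limit of $\int_\Omega q\,u_1u_2\,dx$ is not a Fourier transform of $q$ and not a standard X-ray transform; in \cite{KSU} its injectivity was established by analytic microlocal analysis (an FBI-transform/watermelon-type argument), and only later was the argument simplified in \cite{DKSjU07} by reducing to the two-plane Radon transform. A complete proof must either carry out one of these arguments or explicitly invoke such a result; asserting injectivity of an unidentified transform does not close the proof. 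Relatedly, you locate the ``technical heart'' in the boundary terms on $\partial \Omega_{\rm tan}(\phi)$, but for this particular theorem that is not where the difficulty lies: since $F(x_0)$ and $B(x_0)$ are closed and $\Gamma_D,\Gamma_N$ are open neighborhoods of them, the unmeasured sets $\partial\Omega\setminus\Gamma_D$ and $\partial\Omega\setminus\Gamma_N$ are compact subsets of $\{(x-x_0)\cdot\nu>0\}$ and $\{(x-x_0)\cdot\nu<0\}$ respectively, so $\abs{\partial_\nu\phi}$ is bounded below there and the Carleman boundary terms are controlled with a fixed $\delta>0$; the delicate tangential analysis becomes essential only for the refined results such as Theorems \ref{thm_ks1}--\ref{thm_ks3}.
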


We also state another partial data result, which follows from the previous theorem by placing $x_0$ sufficiently far from $\Omega$. This is close to the earlier result of \cite{BU}.

\begin{thm} \label{thm_ksu2}
Let $\Omega \subset \mR^n$, $n \geq 3$. If $\alpha \in \mR^n$ is a unit vector, define the front and back face of $\partial \Omega$ by 
\begin{gather*}
F(\alpha) = \{ x \in \partial \Omega \,;\, \alpha \cdot \nu(x) \leq 0 \}, \\
B(\alpha) = \{ x \in \partial \Omega \,;\, \alpha \cdot \nu(x) \geq 0 \}.
\end{gather*}
Let $\Gamma_D, \Gamma_N$ be open subsets of $\partial \Omega$ with $F(\alpha) \subset \Gamma_D$ and $B(\alpha) \subset \Gamma_N$. If $q_1, q_2 \in L^{\infty}(\Omega)$ and if 
$$
C_{q_1}^{\Gamma_D,\Gamma_N} = C_{q_2}^{\Gamma_D,\Gamma_N},
$$
then $q_1 = q_2$.
\end{thm}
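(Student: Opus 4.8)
\emph{Proof proposal.} The plan is to deduce this statement directly from Theorem \ref{thm_ksu1} by sending the point $x_0$ to infinity in the direction $-\alpha$, so that the linear weight $\alpha \cdot x$ arises as a limit of the point weights associated to $x_0$. Concretely, I would set $x_0 = -t\alpha$ for a large parameter $t > 0$. Since ${\rm ch}(\overline{\Omega})$ is compact, $x_0 \notin {\rm ch}(\overline{\Omega})$ once $t$ is large enough, so Theorem \ref{thm_ksu1} is applicable with this choice. The whole task then reduces to showing that for $t$ sufficiently large one has the inclusions $F(x_0) \subset \Gamma_D$ and $B(x_0) \subset \Gamma_N$; granting these, the hypothesis $C_{q_1}^{\Gamma_D,\Gamma_N} = C_{q_2}^{\Gamma_D,\Gamma_N}$ together with Theorem \ref{thm_ksu1} immediately yields $q_1 = q_2$.

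The key computation is that for $x \in \partial \Omega$,
$$
(x - x_0) \cdot \nu(x) = x \cdot \nu(x) + t\, \alpha \cdot \nu(x),
$$
where the term $x \cdot \nu(x)$ is bounded, say $\abs{x \cdot \nu(x)} \leq M$, by compactness of $\partial \Omega$. Thus for large $t$ the sign of $(x-x_0)\cdot \nu(x)$ is governed by $\alpha \cdot \nu(x)$, which is exactly the quantity defining $F(\alpha)$ and $B(\alpha)$. I would establish the inclusion $F(x_0) \subset \Gamma_D$ by a compactness argument: if it failed, there would be $t_k \to \infty$ and $x_k \in F(x_0) \setminus \Gamma_D$, and passing to a convergent subsequence $x_k \to x_*$, the inequality $x_k \cdot \nu(x_k) + t_k\, \alpha \cdot \nu(x_k) \leq 0$ forces $\alpha \cdot \nu(x_k) \leq M/t_k \to 0$, hence $\alpha \cdot \nu(x_*) \leq 0$, i.e. $x_* \in F(\alpha) \subset \Gamma_D$. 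Since $\Gamma_D$ is open and its complement closed, this contradicts $x_* \notin \Gamma_D$. The inclusion $B(x_0) \subset \Gamma_N$ is proved in exactly the same way, using $B(\alpha) \subset \Gamma_N$.

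The only delicate point is the behaviour on the tangential set $\partial \Omega_{\rm tan}(\alpha) = \{ x \in \partial \Omega \,;\, \alpha \cdot \nu(x) = 0 \}$, where the dominant term $t\, \alpha \cdot \nu(x)$ vanishes and the sign of $(x - x_0) \cdot \nu(x)$ is determined by $x \cdot \nu(x)$ alone, so that such points may lie in $F(x_0)$ or in $B(x_0)$ for every $t$. This causes no trouble, however, because on this set both defining inequalities $\alpha \cdot \nu \leq 0$ and $\alpha \cdot \nu \geq 0$ hold with equality, so $\partial \Omega_{\rm tan}(\alpha) \subset F(\alpha) \cap B(\alpha) \subset \Gamma_D \cap \Gamma_N$; the tangential points are therefore already covered by both $\Gamma_D$ and $\Gamma_N$, and the compactness argument above goes through unchanged. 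I do not expect any genuine obstacle here: the whole argument is geometric and elementary once Theorem \ref{thm_ksu1} is in hand, the only care being the uniform choice of $t$, which the compactness argument supplies.
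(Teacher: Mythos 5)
Your proposal is correct and follows exactly the route the paper takes: the text introduces Theorem \ref{thm_ksu2} with the remark that it ``follows from the previous theorem by placing $x_0$ sufficiently far from $\Omega$,'' and your choice $x_0 = -t\alpha$ with the compactness argument for the inclusions $F(x_0) \subset \Gamma_D$, $B(x_0) \subset \Gamma_N$ is precisely the intended (and here fully detailed) justification of that remark.
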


Both results can be understood as follows: partial measurements are sufficient for determining the potential provided that $\Gamma_D$ and $\Gamma_N$ satisfy 
$$
\partial \Omega_-(\phi) \cup \partial \Omega_{\rm tan}(\phi) \subset \Gamma_D, \qquad \partial \Omega_+(\phi) \cup \partial \Omega_{\rm tan}(\phi) \subset \Gamma_N
$$
for a suitable family of limiting Carleman weights. In the first theorem $\phi(x) = \log\,\abs{x-x_0'}$ where $x_0'$ ranges over a small neighborhood of $x_0$, and in the second theorem $\phi(x) = \alpha' \cdot x$ where $\alpha'$ ranges over a small neighborhood of $\alpha$ in the unit sphere.

\bigskip

\noindent 2.2. {\bf The result of \cite{I}.} This is a local data result where one measures both Dirichlet and Neumann data on the same set $\Gamma = \Gamma_D = \Gamma_N$, but where the inaccessible part $\Gamma_0 = \partial \Omega \setminus \Gamma$ has to be part of a hyperplane.

\begin{thm} \label{thm_i1}
Let $\Omega \subset \mR^n$, $n \geq 3$. Assume that $\Omega \subset \{ x_n > 0 \}$, let $\Gamma$ be a nonempty open subset of $\partial \Omega$, and assume that $\Gamma_0 = \partial \Omega \setminus \Gamma$ satisfies $\Gamma_0 \subset \{ x_n = 0 \}$. If $q_1, q_2 \in L^{\infty}(\Omega)$ and if 
$$
C_{q_1}^{\Gamma,\Gamma} = C_{q_2}^{\Gamma,\Gamma},
$$
then $q_1 = q_2$.
\end{thm}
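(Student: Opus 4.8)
The plan is to use Isakov's reflection idea to turn this local data problem into a full data density argument across the hyperplane. Write $x = (x',x_n)$ and let $R(x) = x^* = (x',-x_n)$ be the reflection in $\{x_n=0\}$. Since $\Omega \subset \{x_n>0\}$ and $\Gamma_0 \subset \{x_n=0\}$, the outer unit normal on $\Gamma_0$ is $\nu = -e_n$, and $\Omega^* = R(\Omega) \subset \{x_n<0\}$ is glued to $\Omega$ along $\Gamma_0$. Set $\widetilde\Omega = \mathrm{int}(\overline\Omega \cup \overline{\Omega^*})$, a bounded open set symmetric under $R$, and extend each potential by even reflection: $\widetilde q_j(x) = q_j(x)$ for $x\in\Omega$ and $\widetilde q_j(x) = q_j(x^*)$ for $x\in\Omega^*$, so that $\widetilde q_j \in L^\infty(\widetilde\Omega)$ with $\widetilde q_j \circ R = \widetilde q_j$. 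The flatness of $\Gamma_0$ is essential: because $R$ is a Euclidean isometry, $-\Delta$ commutes with $R$, so $u \mapsto u\circ R$ sends solutions of $(-\Delta+\widetilde q_j)u=0$ to solutions of the same equation. (For a curved $\Gamma_0$ the reflected operator would have variable coefficients, which is why the hyperplane hypothesis cannot simply be removed.)

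The key observation is that a solution $u$ that is \emph{odd} in $x_n$ vanishes on $\{x_n=0\}\supset\Gamma_0$, hence has $\supp(u|_{\partial\Omega})\subset\Gamma$ and is an admissible input for $C_q^{\Gamma,\Gamma}$. So I would produce such solutions from full data CGO solutions and symmetrize. Given $\rho\in\mC^n$ with $\rho\cdot\rho=0$ and $\abs\rho$ large, let $\widetilde u = e^{\rho\cdot x}(1+r)$ solve $(-\Delta+\widetilde q_1)\widetilde u=0$ on a ball containing $\widetilde\Omega$ (extending $\widetilde q_1$ by zero), with $\norm{r}_{L^2}=O(\abs\rho^{-1})$, via the construction of \cite{SU}. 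Then $\widetilde u\circ R$ solves the same equation, so $u_1 = \tfrac12(\widetilde u - \widetilde u\circ R)$ is an odd solution with $u_1|_{\Gamma_0}=0$; build $v$ likewise from $\widetilde v = e^{\sigma\cdot x}(1+\widetilde r)$ for $\widetilde q_2$. Since $u_1$ is admissible and $C_{q_1}^{\Gamma,\Gamma}=C_{q_2}^{\Gamma,\Gamma}$, there is a solution $u_2$ of $(-\Delta+q_2)u_2=0$ with $u_2|_{\Gamma_0}=0$ and matching Cauchy data on $\Gamma$; the difference $w=u_1-u_2$ satisfies $(-\Delta+q_2)w=(q_2-q_1)u_1$, $w|_{\partial\Omega}=0$, $\partial_\nu w|_\Gamma=0$. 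Green's identity against $v$ then gives
$$
\int_\Omega (q_1-q_2)\,u_1 v\,dx = \int_{\Gamma_0} v\,\partial_\nu w\,dS = 0,
$$
the boundary term vanishing because $v|_{\Gamma_0}=0$ (oddness of $v$).

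Next I would convert this into a density statement on $\widetilde\Omega$. Writing $\widetilde m = \widetilde q_1-\widetilde q_2$ (even under $R$), expanding the odd parts and changing variables $x\mapsto x^*$ in the reflected terms turns $\int_\Omega \widetilde m\,u_1 v\,dx = 0$ into
$$
\int_{\widetilde\Omega} \widetilde m\,\widetilde u\,\widetilde v\,dx = \int_{\widetilde\Omega} \widetilde m\,\widetilde u\,(\widetilde v\circ R)\,dx .
$$
To recover a target frequency $\xi\in\mR^n$ I would choose, in Sylvester--Uhlmann fashion, $\rho+\sigma = -i\xi$ with $\rho\cdot\rho=\sigma\cdot\sigma=0$, writing $\rho = -\tfrac{i\xi}{2}+s\mu+it\eta$ and $\sigma = -\tfrac{i\xi}{2}-s\mu-it\eta$ for orthonormal real $\mu,\eta\perp\xi$ and $t=\sqrt{s^2-\abs\xi^2/4}$. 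As $s\to\infty$ the left side tends to $\widehat{\widetilde m}(\xi)$. The right side carries the reflected phase $\rho+\sigma^*$, with $\sigma^* = (\sigma',-\sigma_n)$; choosing $\mu_n=0$ makes $\rho+\sigma^* = -i\xi'+2it\eta_n e_n$ purely imaginary, and if $\eta_n\neq 0$ its $x_n$-frequency tends to infinity, so by Riemann--Lebesgue the right side tends to $0$, giving $\widehat{\widetilde m}(\xi)=0$.

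The main obstacle is precisely this choice of $\mu,\eta$, that is, controlling the extra cross term the reflection introduces. For $n\geq 3$ one takes $\mu$ in the $(n-2)$-dimensional space $\{x_n=0\}\cap\xi^\perp$ and then $\eta\perp\{\mu,\xi\}$ with $\eta_n\neq 0$; a short linear-algebra check shows $\eta_n\neq 0$ is attainable exactly when $e_n\notin\mathrm{span}\{\mu,\xi\}$, i.e. when $\xi' \neq 0$. Thus $\widehat{\widetilde m}(\xi)=0$ for all $\xi$ with $\xi'\neq 0$, and since $\widetilde m\in L^\infty$ has compact support its Fourier transform is continuous, so $\widehat{\widetilde m}\equiv 0$ on the remaining line $\xi=\xi_n e_n$ as well. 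Hence $\widetilde m=0$ and $q_1=q_2$. Note that the dimensional restriction $n\geq 3$ enters exactly in leaving enough room to select $\mu$ and $\eta$, and that the argument never requires $\widetilde\Omega$ to have smooth boundary, since the CGO solutions live on a ball and only the integrals over $\widetilde\Omega$ are used.
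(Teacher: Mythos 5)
Your argument is correct and is precisely the reflection approach of Isakov that the paper cites for this theorem (the survey describes approach (2) only as ``reflection arguments and the full data methods of \cite{SU}'' without reproducing the proof, and instead recovers the $n=3$ case as a corollary of Theorem \ref{thm_ks1} via Carleman estimates and the Helgason support theorem); all the key points are in place --- the even extension of the potentials, the odd symmetrization of Sylvester--Uhlmann solutions, the choice $\mu_n=0$ and $\eta_n\neq 0$ so that $\rho+\sigma^*$ is purely imaginary with large modulus, and continuity of the Fourier transform to handle the exceptional line $\xi'=0$. The one point to tidy is that $\supp(u_1|_{\partial\Omega})$ is a closed set and need not be contained in the \emph{open} set $\Gamma$ merely because $u_1$ vanishes on $\Gamma_0$ (it may meet $\overline{\Gamma}\setminus\Gamma\subset\Gamma_0$), so to apply the hypothesis $C_{q_1}^{\Gamma,\Gamma}=C_{q_2}^{\Gamma,\Gamma}$ literally as defined in Section \ref{sec_intro} one should either observe that the equality of Cauchy data sets extends by an approximation argument to traces supported in $\overline{\Gamma}$, or adopt Isakov's formulation of local data in terms of boundary values vanishing on $\Gamma_0$.
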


By applying the Kelvin transform $K(x) = x/\abs{x}^2$, this theorem implies a similar result (also proved in \cite{I}) where the inaccessible part of the boundary has to be part of a sphere.

\begin{thm} \label{thm_i2}
Let $\Omega \subset \mR^n$, $n \geq 3$. Assume that $\Omega$ is a strict subset of some ball $B$, let $\Gamma$ be a nonempty open subset of $\partial \Omega$, and assume that $\Gamma_0 = \partial \Omega \setminus \Gamma$ satisfies $\Gamma_0 \subset \partial B$. If $q_1, q_2 \in L^{\infty}(\Omega)$ and if 
$$
C_{q_1}^{\Gamma,\Gamma} = C_{q_2}^{\Gamma,\Gamma},
$$
then $q_1 = q_2$.
\end{thm}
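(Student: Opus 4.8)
The plan is to deduce Theorem \ref{thm_i2} from Theorem \ref{thm_i1} by a Kelvin transform that straightens the spherical piece $\partial B$ into a hyperplane. Since $\Omega$ is a strict subset of $B$ and $\Gamma = \partial\Omega \setminus \Gamma_0$ is nonempty, I would first choose a point $p \in \partial B$ with $p \notin \overline{\Omega}$ and let $K(x) = p + (x-p)/\abs{x-p}^2$ be the inversion centered at $p$. Because $p$ lies on $\partial B$, the map $K$ sends $\partial B \setminus \{p\}$ to a hyperplane $H$ and the open ball $B$ to one of the half-spaces bounded by $H$; after composing with a rigid motion I may assume $H = \{x_n = 0\}$ and $K(B) \subset \{x_n > 0\}$. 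Since $p \notin \overline{\Omega}$ and $\Omega$ is bounded, $K$ is a smooth diffeomorphism near $\overline{\Omega}$ and $\abs{y-p}$ stays bounded and bounded away from $0$ on $\tilde\Omega := K(\Omega)$. Hence $\tilde\Omega$ is a bounded connected open set with $C^\infty$ boundary satisfying $\tilde\Omega \subset \{x_n > 0\}$, with $\tilde\Gamma_0 := K(\Gamma_0) \subset H$ and $\tilde\Gamma := K(\Gamma)$ a nonempty open subset of $\partial\tilde\Omega$ obeying $\partial\tilde\Omega \setminus \tilde\Gamma = \tilde\Gamma_0$, so that $\tilde\Omega, \tilde\Gamma$ satisfy the hypotheses of Theorem \ref{thm_i1}.

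Next I would transport the Schr\"odinger equation through the conformal map. For $u$ on $\Omega$ set $v(y) = \abs{y-p}^{2-n} u(K(y))$ on $\tilde\Omega$. The standard conformal covariance of the Laplacian under inversion gives $(\Delta v)(y) = \abs{y-p}^{-(n+2)} (\Delta u)(K(y))$, so $(-\Delta+q)u = 0$ in $\Omega$ if and only if $(-\Delta+\tilde q)v = 0$ in $\tilde\Omega$ with $\tilde q(y) = \abs{y-p}^{-4} q(K(y))$. Since $\abs{y-p}$ is bounded below on $\tilde\Omega$, we have $\tilde q \in L^\infty(\tilde\Omega)$, and $u \mapsto v$ is an isomorphism $H_{\Delta}(\Omega) \to H_{\Delta}(\tilde\Omega)$. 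Crucially $\tilde q_1 = \tilde q_2$ if and only if $q_1 = q_2$, because the two potentials carry the same geometric factor $\abs{y-p}^{-4}$.

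The heart of the argument, and the step I expect to require the most care, is to check that the partial Cauchy data sets transform correctly, i.e.\ that $C_{q_1}^{\Gamma,\Gamma} = C_{q_2}^{\Gamma,\Gamma}$ implies $C_{\tilde q_1}^{\tilde\Gamma,\tilde\Gamma} = C_{\tilde q_2}^{\tilde\Gamma,\tilde\Gamma}$. The factor $c(y) = \abs{y-p}^{2-n}$ is smooth and nonvanishing near $\tilde\Gamma$, and $K$ carries $\partial\Omega$ to $\partial\tilde\Omega$ preserving normal directions (conformal maps preserve angles). Hence on $\tilde\Gamma$ one has $v|_{\tilde\Gamma} = c\,(u|_{\Gamma}\circ K)$ and $\partial_{\tilde\nu} v|_{\tilde\Gamma} = (\partial_{\tilde\nu} c)\,(u|_{\Gamma}\circ K) + c\,J\,(\partial_\nu u|_{\Gamma}\circ K)$, where $J = \abs{y-p}^{-2} > 0$ is the conformal scaling factor of $K$ on the boundary. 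This realizes the pair $(v|_{\tilde\Gamma}, \partial_{\tilde\nu} v|_{\tilde\Gamma})$ as a fixed invertible triangular linear transformation of $(u|_{\Gamma}, \partial_\nu u|_{\Gamma})$ depending only on the geometry and not on the potential; moreover $\supp(v|_{\partial\tilde\Omega}) = K(\supp(u|_{\partial\Omega})) \subset \tilde\Gamma$ since $c$ does not vanish. Thus $K$ induces one and the same bijection between $C_q^{\Gamma,\Gamma}$ and $C_{\tilde q}^{\tilde\Gamma,\tilde\Gamma}$ for $q = q_1$ and $q = q_2$, so equality of the Cauchy data sets is preserved. The technical points to verify are that this correspondence respects the low-regularity trace spaces $H^{-1/2}$ and $H^{-3/2}$ attached to $H_{\Delta}$ and the support condition in the definition of the Cauchy data set.

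Finally, applying Theorem \ref{thm_i1} to the domain $\tilde\Omega$ with accessible set $\tilde\Gamma$ yields $\tilde q_1 = \tilde q_2$ in $\tilde\Omega$, and dividing out the common factor $\abs{y-p}^{-4}$ and pulling back through $K$ gives $q_1 = q_2$ in $\Omega$, as desired.
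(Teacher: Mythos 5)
Your proposal is correct and is exactly the reduction the paper has in mind: the paper deduces Theorem \ref{thm_i2} from Theorem \ref{thm_i1} in a single sentence by invoking the Kelvin transform $K(x)=x/\abs{x}^2$ centered at a point of $\partial B$ (after normalizing $B = B(\tfrac12 e_n,\tfrac12)$), and your write-up supplies the details of that reduction --- the conformal covariance of $\Delta$ under the Kelvin transform, the transformation law $\tilde q(y) = \abs{y-p}^{-4}\, q(K(y))$, and the invertible triangular correspondence of Cauchy data on $\Gamma$ and $\tilde\Gamma$. The one hypothesis you use that the paper leaves implicit is the existence of an inversion center $p\in\partial B\setminus\overline{\Omega}$, which is how ``strict subset'' is meant to be read here (and is needed in any case for the associated limiting Carleman weight $\alpha\cdot x/\abs{x}^2$ to be smooth on $\overline{\Omega}$).
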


These results can be understood so that measurements on $\Gamma$ are sufficient to determine the potential if the inaccessible part $\Gamma_0$ satisfies certain geometric conditions related to limiting Carleman weights. In the first theorem, it holds that 
$$
\Gamma_0 \subset \bigcap_{\underset{\alpha \cdot e_n = 0}{\alpha \in \mR^n, \abs{\alpha}=1}}\partial \Omega_{\rm tan}(\phi_{\alpha})
$$
where $\phi_{\alpha}(x) = \alpha \cdot x$ and $e_n$ is the $n$th coordinate vector. Taking complements, this condition means that $\Gamma$ should be sufficiently large in the sense that  
$$
\bigcup_{\underset{\alpha \cdot e_n = 0}{\alpha \in \mR^n, \abs{\alpha}=1}} (\partial \Omega_+(\phi_{\alpha})\cup \partial \Omega_-(\phi_{\alpha})) \subset \Gamma.
$$
In the second theorem, if the coordinates are normalized in such a way that $B = B(\frac{1}{2}e_n, \frac{1}{2})$, one has instead 
$$
\Gamma_0 \subset \bigcap_{\underset{\alpha \cdot e_n = 0}{\alpha \in \mR^n, \abs{\alpha}=1}}\partial \Omega_{\rm tan}(K^* \phi_{\alpha})
$$
where $K$ is the Kelvin transform and $K^* \varphi_{\alpha}$ is the limiting Carleman weight $K^* \varphi_{\alpha}(x) = \alpha \cdot x/\abs{x}^{2}$.

\bigskip

\noindent 2.3. {\bf The result of \cite{IUY}.} This is a general local data result that is valid for two-dimensional domains. (It was extended to potentials with $W^{1,p}$, $p > 2$, regularity in \cite{IY_nonsmooth}.)

\begin{thm} \label{thm_iuy}
Let $\Omega \subset \mR^2$ be a bounded open set with $C^{\infty}$ boundary, and let $\Gamma$ be a nonempty open subset of $\partial \Omega$. If $q_1, q_2 \in C^{4,\alpha}(\overline{\Omega})$ for some $\alpha > 0$ and if 
$$
C_{q_1}^{\Gamma,\Gamma} = C_{q_2}^{\Gamma,\Gamma},
$$
then $q_1 = q_2$.
\end{thm}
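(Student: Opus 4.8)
The plan is to follow the strategy of \cite{IUY}, which rests on three ingredients: an integral identity turning the equality of Cauchy data into an orthogonality relation, a construction of complex geometrical optics solutions in two dimensions built from harmonic (limiting Carleman) weights, and a stationary phase argument that localizes at an arbitrary interior point. Throughout I write $z = x_1 + ix_2$, $\partial_z = \frac12(\partial_{x_1} - i\partial_{x_2})$ and $\partial_{\bar z} = \frac12(\partial_{x_1}+i\partial_{x_2})$, so that $\Delta = 4\partial_z \partial_{\bar z}$. First I would record the identity: if $u_1$ solves $(-\Delta+q_1)u_1 = 0$ and $u_2$ solves $(-\Delta+q_2)u_2 = 0$ in $\Omega$, then Green's formula together with $\Delta u_j = q_j u_j$ gives
$$
\int_\Omega (q_1-q_2)u_1 u_2 \, dx = \int_{\partial\Omega}(u_2 \partial_\nu u_1 - u_1 \partial_\nu u_2)\,dS.
$$
The hypothesis $C_{q_1}^{\Gamma,\Gamma} = C_{q_2}^{\Gamma,\Gamma}$ lets one choose, for a solution $u_1$ with Dirichlet data supported in $\Gamma$, a solution $u_2$ of the second equation with the same Cauchy data on $\Gamma$, so the part of the boundary integral over $\Gamma$ cancels. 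The task then reduces to constructing rich families of solutions for which the remaining integral over the inaccessible arc $\Gamma_0 = \partial\Omega\setminus\Gamma$ is negligible as a large parameter $\tau \to \infty$.

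Next I would construct the CGO solutions. Since in two dimensions the limiting Carleman weights are exactly the harmonic functions, I take a holomorphic phase $\Phi = \varphi + i\psi$ and seek
$$
u_1 = e^{\tau\Phi}(a + r_1), \qquad u_2 = e^{-\tau\bar\Phi}(b + r_2),
$$
with $a$ holomorphic and $b$ antiholomorphic leading amplitudes determined by transport equations, and remainders $r_1, r_2$ controlled by a Carleman estimate with boundary terms (the two-dimensional analogue of the estimate in Section \ref{sec_carleman}). The decisive point for the \emph{local} data problem is that the real exponential weights $e^{\pm\tau\varphi}$ cancel in the product, leaving the purely oscillatory integrand
$$
u_1 u_2 = e^{2i\tau\psi}\bigl(ab + O(\tau^{-1})\bigr).
$$
To make the $\Gamma_0$ boundary term vanish I would choose the amplitudes $a, b$ to vanish on $\Gamma_0$ — which is possible because $\Gamma_0$ is a proper arc and one has the full flexibility of complex analysis — and then use the boundary Carleman estimate to guarantee that the traces of $r_1, r_2$ on $\Gamma_0$ are small enough that $\int_{\Gamma_0}(u_2\partial_\nu u_1 - u_1\partial_\nu u_2)\,dS \to 0$.

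Finally I would insert the critical point. Choose $\Phi$ so that $\Phi'(z_0) = 0$ and $\Phi''(z_0)\neq 0$ at a prescribed interior point $z_0$; then $\psi = \im \Phi$ has a nondegenerate (saddle) critical point at $z_0$. Putting the solutions into the identity and letting $\tau\to\infty$, stationary phase shows that the leading contribution of $\int_\Omega (q_1-q_2)e^{2i\tau\psi}(ab + O(\tau^{-1}))\,dx$ is a nonzero multiple of $\tau^{-1}(q_1-q_2)(z_0)$. Since the left-hand side tends to $0$, this forces $(q_1-q_2)(z_0) = 0$, and as $z_0$ is arbitrary we conclude $q_1 = q_2$.

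The main obstacle is the simultaneous presence of a \emph{critical point} in the weight and the demand for boundary control on $\Gamma_0$. The framework of \cite{KSU} deliberately excludes critical points, and near $z_0$ both the transport equations and the standard Carleman estimate degenerate; building amplitudes with enough correction terms — which is where the $C^{4,\alpha}$ regularity of the potentials enters — and remainders that are small in the right norms while the weight has a critical point is the technical heart of the argument. The companion difficulty is quantitative: one must show that the boundary remainders on $\Gamma_0$ decay strictly faster than the $\tau^{-1}$ bulk term, so that the stationary phase extraction of the pointwise value $(q_1-q_2)(z_0)$ is not contaminated.
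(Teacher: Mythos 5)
Your overall plan --- harmonic (limiting Carleman) weights with a nondegenerate interior critical point, CGO solutions with opposite exponential weights, and stationary phase extracting $(q_1-q_2)(z_0)$ at a dense set of points --- is the approach of \cite{IUY} that the paper describes in Section 2.3. However, two steps in your treatment of the inaccessible arc $\Gamma_0 = \partial\Omega\setminus\Gamma$ would fail as written. The first is that you never impose the geometric condition that the paper singles out as the \emph{starting point} of the proof: one must construct, for each $p$ in a dense subset $S$ of $\Omega$, a harmonic Morse weight $\phi_p$ with a critical point at $p$ \emph{and} with $\Gamma_0 \subset \partial\Omega_{\rm tan}(\phi_p)$, i.e.\ $\partial_\nu \phi_p = 0$ on all of $\Gamma_0$ (equivalently, $\im \Phi$ locally constant on $\Gamma_0$ for the holomorphic completion $\Phi$). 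This is not a convenience but a necessity. The Carleman estimate with boundary terms controls the trace of a remainder only on the portion of $\partial\Omega$ where $\partial_\nu\varphi$ has the favorable sign (and only weakly where it vanishes); since $u_1$ and $u_2$ carry the opposite weights $e^{\pm\tau\varphi}$, their favorable regions are complementary, and the only part of the boundary on which \emph{both} remainders can be controlled is $\partial\Omega_{\rm tan}(\varphi)$. If $\Gamma_0$ meets $\partial\Omega_+(\varphi)$, say, then the trace of one of the two solutions there is completely out of reach (it is exponentially large relative to the quantities you can estimate), the hypothesis $\supp(u_j|_{\partial\Omega})\subset\Gamma$ needed for Lemma \ref{prop_integralidentity} cannot be arranged, and your ``$\Gamma_0$ term is negligible'' step collapses. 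The existence of such a family $\{\phi_p\}_{p\in S}$ is itself a nontrivial lemma and is where the real work with complex analysis begins.

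The second gap is your device of making the leading amplitudes vanish on $\Gamma_0$. The transport equation $\Phi'\,\partial_{\bar z}a = 0$ forces $a$ to be holomorphic away from the critical point, and a holomorphic function continuous up to the boundary that vanishes on a boundary arc of positive length vanishes identically (conformally map to the disc and apply the F.~and M.~Riesz / Privalov uniqueness theorem); so ``the full flexibility of complex analysis'' does not permit a nonzero holomorphic $a$ with $a|_{\Gamma_0}=0$. The construction in \cite{IUY} circumvents this with the real-linear ansatz $u = e^{\tau\Phi}a + e^{\tau\bar\Phi}\bar a + \cdots$: on $\Gamma_0$, where $\im\Phi$ is constant, the leading boundary value is proportional to $e^{\tau\varphi}\,\re(e^{i\tau c}a)$, so one only needs a real condition such as $\re a|_{\Gamma_0}=0$, which a nonconstant holomorphic $a$ can satisfy. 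Note that this fix again uses the condition $\im\Phi|_{\Gamma_0}=\mathrm{const}$ in an essential way, so the two gaps are really one missing idea. With that condition supplied, the remainder of your outline (stationary phase at the critical point, the degeneration of the transport equation there being where the $C^{4,\alpha}$ regularity is consumed) is consistent with \cite{IUY}, to which the paper defers for the details.
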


The result is related to limiting Carleman weights as follows. Let $\Gamma$ be any open subset of $\partial \Omega$, and let $\Gamma_0 = \partial \Omega \setminus \Gamma$ be the inaccessible part of the boundary. The proof of \cite{IUY} begins by showing that one can find a family of harmonic functions (limiting Carleman weights in 2D) $\{\phi_p\}_{p \in S}$, where $S$ is a dense subset of $\Omega$, such that 
$$
\Gamma_0 \subset \bigcap_{p \in S} \partial \Omega_{\rm tan}(\phi_p)
$$
and each $\phi_p$ is a Morse function (its critical points are nondegenerate) having a critical point at $p$. We refer to \cite{IUY} and the survey \cite{GT_survey} for more details about partial data problems in two dimensions.

\bigskip

\noindent 2.4. {\bf The result of \cite{DKSjU09}.}
As it is explained in Lemma \ref{prop_integralidentity} below, if $\Gamma_D = \Gamma_N = \Gamma$, where $\Gamma$ is a nonempty open subset of $\partial \Omega$, and if $C_{q_1}^{\Gamma,\Gamma} = C_{q_2}^{\Gamma,\Gamma}$, then 
$$
\int_{\Omega} (q_1-q_2) u_1 u_2 \,dx = 0
$$
for any $u_j \in H_{\Delta}(\Omega)$ with $(-\Delta+q_j)u_j = 0$ in $\Omega$ and $\supp(u_j|_{\partial \Omega}) \subset \Gamma$. In \cite{DKSjU09}, the authors consider a linearization of this assumption at $q_1=q_2=0$. The main result of \cite{DKSjU09} is:

\begin{thm} \label{thm_linearized}
Let $\Omega$ be a bounded connected open subset of $\mR^n$, $n \geq 2$, with connected $C^{\infty}$ boundary. The set of products of harmonic functions in $C^{\infty}(\overline{\Omega})$ which vanish on a closed proper subset $F \subset \partial \Omega$ is dense in $L^1(\Omega)$.
\end{thm}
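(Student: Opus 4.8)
The plan is to combine a duality reduction with complex geometrical optics. By the Hahn--Banach theorem, the linear span of the products is dense in $L^1(\Omega)$ precisely when the only $f \in L^\infty(\Omega) = (L^1(\Omega))^*$ annihilating all of them is $f \equiv 0$. Thus I would fix $f \in L^\infty(\Omega)$ with
\[
\int_\Omega f\, u_1 u_2 \, dx = 0
\]
for all harmonic $u_1,u_2 \in C^\infty(\overline\Omega)$ vanishing on $F$, extend $f$ by zero to a compactly supported element of $L^\infty(\mR^n)$, and try to show that this extension vanishes. Write $V = \partial\Omega \setminus F$ for the (nonempty, open) accessible part.

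The building blocks are the harmonic exponentials $e^{x\cdot\zeta}$ with $\zeta \in \mC^n$ and $\zeta\cdot\zeta = 0$. In the full data case one selects $\zeta_1,\zeta_2$ with $\zeta_1+\zeta_2 = i\xi$, so that $u_1 u_2 = e^{i x\cdot\xi}$ and the orthogonality relation reads $\widehat f(\xi)=0$ for every $\xi \in \mR^n$, forcing $f \equiv 0$ by Fourier inversion. The entire difficulty of the present statement is the constraint that the factors vanish on $F$. One can always enforce this: writing $u_j = e^{x\cdot\zeta_j} + v_j$ with $v_j$ harmonic, $v_j|_F = -e^{x\cdot\zeta_j}|_F$ and $v_j|_V = 0$, the maximum principle gives $\norm{v_j}_{L^\infty(\Omega)} \le \norm{e^{x\cdot\zeta_j}}_{L^\infty(F)}$. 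To illustrate the mechanism take $n\ge 3$ and $\zeta_j = \tau(\omega_j + i\omega_j')$ with $\omega_j,\omega_j'$ orthonormal; then the factor $e^{x\cdot\zeta_j}$ has modulus $e^{\tau\, \omega_j\cdot x}$, so the corrector $v_j$ is negligible compared with the leading term exactly at interior points $x$ with $\omega_j\cdot x > \max_{y\in F} \omega_j\cdot y$.

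The key difficulty surfaces here: the two factors impose \emph{competing} half-space conditions of this kind, and there is no single choice of phases making both correctors negligible throughout $\Omega$. Consequently the orthogonality relation does not recover $\widehat f(\xi)$ for all $\xi$ at once; letting $\tau \to \infty$ and analyzing the resulting oscillatory integrals by stationary phase, it yields information about $f$ only along the restricted families of phases compatible with $F$, that is, along the limiting Carleman weights for which $F$ lies (up to the allowed exponential error) on the unfavorable side. Equivalently, one obtains one-sided constraints on the analytic wavefront set $\mathrm{WF}_a(f)$ of the zero-extension at points of $\partial\Omega$.

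The main obstacle, and the heart of \cite{DKSjU09}, is to upgrade this partial, one-sided information to $f\equiv 0$. For this I would pass to analytic microlocal analysis: using the FBI transform together with the real-analyticity of the weights, the accessible identities translate into the statement that certain codirections are absent from $\mathrm{WF}_a(f)$ over boundary points. Since $f$ vanishes identically outside $\overline\Omega$, a Sato--Kashiwara/Sj\"ostrand type propagation-of-analyticity argument (a microlocal Holmgren uniqueness theorem) then propagates the vanishing of $f$ from the exterior across $\partial\Omega$; because $F$ is a \emph{proper} closed subset, the accessible weights suffice to sweep this propagation over all of $\Omega$, giving $f\equiv 0$. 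This analytic-continuation step is the crux, and is precisely why the argument, unlike its full data counterpart, is currently confined to the linearized problem.
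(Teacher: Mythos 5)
Your proposal follows essentially the same route as the paper: reduce by duality to showing that an annihilating $f \in L^{\infty}(\Omega)$ vanishes, test against harmonic exponentials $e^{x \cdot \zeta}$ with $\zeta \cdot \zeta = 0$ corrected to vanish on $F$, observe that the boundary constraint only yields one-sided exponential decay of a Segal--Bargmann/FBI transform of the zero-extension of $f$, and upgrade this by a Kashiwara watermelon-theorem type argument. The paper implements the step you correctly flag as the crux in a concrete form --- after a Kelvin-transform normalization and a Runge-type local-to-global reduction, the propagation of analyticity is carried out by a harmonic-majorization lemma for the entire function $s \mapsto Tf(s,x')$ --- which is precisely the Sj\"ostrand-style propagation you invoke.
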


While the proof of this theorem is quite different from the ones of the other results mentioned here, it does depend on the linear limiting Carleman weights $\phi(x) = \alpha \cdot x$ and the associated complex geometrical optics solutions $e^{\zeta \cdot x}$, where $\zeta \in \mC^n$ satisfies $\zeta \cdot \zeta = 0$, or equivalently $\zeta = \tau(\alpha+i\beta)$ where $\alpha, \beta \in \mR^n$ are unit vectors satisfying $\alpha \cdot \beta = 0$. See Section \ref{sec_linearized} for a sketch of the proof of this result.

\bigskip

\noindent 2.5. {\bf The results of \cite{KSa}.}
The partial data results in \cite{KSa} are valid on a class of Riemannian manifolds of dimension $n \geq 3$, but here we will only mention some consequences for domains in $\mR^3$. In all of the results below $\Omega \subset \mR^3$ is a bounded open connected set with $C^{\infty}$ boundary, and we consider the decomposition 
$$
\partial \Omega = \partial \Omega_+(\phi) \cup \partial \Omega_-(\phi) \cup \partial \Omega_{\rm tan}(\phi)
$$
with respect to a fixed limiting Carleman weight $\phi$. We will also assume that $\partial \Omega_{\rm tan}$ is decomposed as 
$$
\partial \Omega_{\rm tan} = \Gamma_a \cup \Gamma_i
$$
where $\Gamma_a$ is an open subset of $\partial \Omega_{\rm tan}$ that is accessible to boundary measurements, and $\Gamma_i$ is the inaccessible part. Further, we will assume that $\Gamma_D$ and $\Gamma_N$ are nonempty open sets in $\partial \Omega$ with $\partial \Omega_- \cup \Gamma_a \subset \Gamma_D$ and $\partial \Omega_+ \cup \Gamma_a \subset \Gamma_N$.

The content of the results below is that if the inaccessible part $\Gamma_i$ satisfies a geometric condition, then Dirichlet measurements on $\partial \Omega_- \cup \Gamma_a$ and Neumann measurements on $\partial \Omega_+ \cup \Gamma_a$ are sufficient to determine the unknown coefficient locally away from $\Gamma_i$.

The first theorem corresponds to the case $\phi(x) = x_1$ and the case where the inaccessible part $\Gamma_i$ is part of a cylindrical set. This result generalizes Theorem \ref{thm_ksu2} (if one chooses $E = \partial \Omega_0$ and $\Gamma_i = \emptyset$) and Theorem \ref{thm_i1} (if one chooses $\Omega_0$ and $E$ so that  $\Omega \cap \{ x_3 = 0 \} \subset \mR \times (\partial \Omega_0 \setminus E)$).

\begin{thm} \label{thm_ks1}
Suppose that $\Omega \subset \mR \times \Omega_0$ where $\Omega_0$ is a bounded open set with $C^{\infty}$ boundary in $\mR^2$. Let $\phi(x) = x_1$, and let $E$ be an open subset of $\partial \Omega_0$ such that $\Gamma_i$ satisfies 
$$
\Gamma_i \subset \mR \times (\partial \Omega_0 \setminus E).
$$
If $q_1, q_2 \in C(\overline{\Omega})$ and if 
$$
C_{q_1}^{\Gamma_D,\Gamma_N} = C_{q_2}^{\Gamma_D,\Gamma_N},
$$
then $q_1 = q_2$ in $\overline{\Omega} \cap (\mR \times O)$ where $O$ is the intersection of $\overline{\Omega}_0$ with the union of all lines in $\mR^2$ that have $\partial \Omega_0 \setminus E$ on one side.
\end{thm}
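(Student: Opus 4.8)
The plan is to follow the strategy indicated for the results of \cite{KSa}: reduce equality of the Cauchy data to an integral identity, insert complex geometrical optics (CGO) solutions built from the linear limiting Carleman weight $\phi(x)=x_1$ and concentrating on lines in the cross-section $\Omega_0$, and read off from the limit the vanishing of a mixed Fourier/local attenuated geodesic ray transform of $f := q_1 - q_2$, which is then inverted on the set $O$.

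First I would apply Lemma \ref{prop_integralidentity} to the hypothesis $C_{q_1}^{\Gamma_D,\Gamma_N}=C_{q_2}^{\Gamma_D,\Gamma_N}$ to obtain the orthogonality relation
$$\int_\Omega f\, u_1 u_2 \,dx = 0,$$
valid for solutions $u_j \in H_\Delta(\Omega)$ of $(-\Delta+q_j)u_j=0$ whose boundary traces are compatible with the measured sets. By the support conditions built into the definition of $C_{q_j}^{\Gamma_D,\Gamma_N}$ together with the hypotheses $\partial\Omega_-\cup\Gamma_a\subset\Gamma_D$ and $\partial\Omega_+\cup\Gamma_a\subset\Gamma_N$, the boundary integral coming from Green's formula reduces to a contribution supported on the inaccessible tangential set $\Gamma_i$. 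The entire task is then to produce CGO families for which this last contribution is negligible.

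Next I would construct, using the Carleman estimate with boundary terms of Section \ref{sec_carleman}, CGO solutions adapted to $\phi=x_1$. Writing $x=(x_1,x')$ with $x'\in\Omega_0\subset\mR^2$, I look for
$$u_j = e^{\mp(x_1+i\psi)/h}\,(a_j + r_j),$$
where the transversal amplitude $a_j$ is a Gaussian beam concentrating, as $h\to 0$, on a fixed straight line $\gamma\subset\Omega_0$ (a geodesic of the Euclidean cross-section), the phase $\psi$ is chosen to generate decay along the lateral boundary, and $\norm{r_j}_{L^2}=o(1)$. The Carleman estimate both furnishes the correction $r_j$ and lets one prescribe enough vanishing of the traces on $\partial\Omega_\pm\cup\Gamma_a$ that the boundary term over $\Gamma_i$ can be killed, provided the line $\gamma$ has $\partial\Omega_0\setminus E$ on one side. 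I expect this to be the main obstacle: since $\phi=x_1$ is tangential along the entire lateral boundary $\mR\times\partial\Omega_0$, the Carleman weight gives no decay there, so the required smallness of the trace on $\Gamma_i\subset\mR\times(\partial\Omega_0\setminus E)$ must come from the imaginary phase $\psi$ together with the transversal concentration of $a_j$; reconciling this with the one-sided condition on $\gamma$ is the delicate step.

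Inserting these solutions into the integral identity and letting $h\to 0$, the product $u_1u_2$ localizes to the line $\gamma$ and carries a factor oscillating in $x_1$ with a dual frequency $\lambda$ and an attenuation generated by $\psi$. The limit yields, for every admissible line $\gamma$ and every $\lambda\in\mR$, the vanishing of the attenuated X-ray transform along $\gamma$ of the partial Fourier transform of $f$ in $x_1$. I would then invoke injectivity of this local attenuated geodesic ray transform over the family of lines having $\partial\Omega_0\setminus E$ on one side. Such lines sweep out precisely the region $O=\overline\Omega_0\cap\bigl(\bigcup_{\gamma}\gamma\bigr)$, so on $O$ the transform determines the Fourier transform $\widehat{f}(\lambda,\cdot)$; inverting in $x_1$ and using $q_j\in C(\overline\Omega)$ gives $f=0$, i.e.\ $q_1=q_2$, on $\overline\Omega\cap(\mR\times O)$. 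Taking $\Gamma_i=\emptyset$ recovers Theorem \ref{thm_ksu2}, and choosing the cross-section so that the inaccessible flat face lies in $\mR\times(\partial\Omega_0\setminus E)$ recovers Theorem \ref{thm_i1}.
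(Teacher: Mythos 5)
Your outline tracks the paper's proof closely through the first two stages: the integral identity of Lemma \ref{prop_integralidentity}, and the CGO solutions $u_j = e^{\mp s x_1}(e^{isr}r^{-1/2}b(\theta)+r_j)$ with slightly complex frequency $s=\tau+i\lambda$, built from the Carleman estimate of Section \ref{sec_carleman} and concentrating on a line in $\Omega_0$ avoiding $\partial\Omega_0\setminus E$ (Proposition \ref{prop_cgo_existence}). One small correction there: in the paper the boundary term in Lemma \ref{prop_integralidentity} vanishes \emph{exactly} because the solutions are required to satisfy $\supp(u_1|_{\partial\Omega})\subset\Gamma_D$, $\supp(u_2|_{\partial\Omega})\subset\Gamma_N$; there is no residual contribution on $\Gamma_i$ to be made ``negligible.'' The trace condition on $\Gamma_i$ is arranged by making the amplitude vanish identically on $\mR\times(\partial\Omega_0\setminus E)$ (via the angular support of $b$) and by prescribing the correction term on $S_-\cup S_0$ in Proposition \ref{prop_carleman_solvability}; the phase $e^{isr}$ plays no role in producing decay on the lateral boundary, since $\phi=x_1$ gives none there.

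The genuine gap is in your last step. You arrive, as the paper does, at the vanishing of
$$
T f(\lambda,\gamma)=\int_{-\infty}^{\infty} e^{-2\lambda t}\,\hat f(2\lambda,\gamma(t))\,dt
$$
for all $\lambda$ and all lines $\gamma$ missing $K={\rm ch}(\partial\Omega_0\setminus E)$, and then ``invoke injectivity of this local attenuated geodesic ray transform.'' But that injectivity is precisely the content of the proof of Theorem \ref{thm_ks1}, and it is not an off-the-shelf support theorem: the attenuation $e^{-2\lambda t}$ is coupled to the Fourier frequency $2\lambda$, and the Helgason support theorem applies only to the \emph{unattenuated} transform. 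The paper's key device (from \cite{DKLSa}) is to set $\lambda=0$, apply Helgason to conclude $\hat f(0,x')=0$ off $K$, then differentiate the identity in $\lambda$ at $\lambda=0$; at each order the already-established vanishing kills the terms involving $t$-weights, and Helgason yields $(\partial/\partial\xi_1)^k\hat f(0,x')=0$ off $K$ for every $k$. Since $x_1\mapsto f(x_1,x')$ is compactly supported, $\xi_1\mapsto\hat f(\xi_1,x')$ is entire, so all derivatives vanishing at $\xi_1=0$ forces $\hat f(\xi_1,x')=0$ for all $\xi_1$ and $x'\notin K$. Without this reduction (or a proved support theorem for the exponential X-ray transform restricted to lines missing a convex set), your final step does not go through.
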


Note that if $\Omega_0$ is convex and $E$ is connected, then the set $O$ in the previous theorem is just the convex hull of $E$ in $\overline{\Omega}_0$ and one recovers the potential in $\overline{\Omega} \cap (\mR \times {\rm ch}_{\overline{\Omega}_0}(E))$. The next theorem is related to the logarithmic weight and generalizes Theorem \ref{thm_ksu1} (choose $E = \partial \Omega_0$ and $\Gamma_i = \emptyset$). The inaccessible part of the boundary must now be part of a conical set.

\begin{thm} \label{thm_ks2}
Let $\overline{\Omega} \subset \{ x_3 > 0 \}$ and let $\phi(x) = \log \,\abs{x}$. Consider the hemisphere $S^2_+ = \{ \omega \in S^2 \,;\, \omega_3 > 0 \}$, and let $M_0$ be a compact subdomain of $S^2_+$ with $C^{\infty}$ boundary. Let $E$ be an open subset of $\partial M_0$ such that $\Gamma_i$ satisfies 
$$
\Gamma_i \subset \{ r\omega \,;\, r > 0, \omega \in \partial M_0 \setminus E \}.
$$
If $q_1, q_2 \in C(\overline{\Omega})$ and if 
$$
C_{q_1}^{\Gamma_D,\Gamma_N} = C_{q_2}^{\Gamma_D,\Gamma_N},
$$
then $q_1 = q_2 \text{ in } \overline{\Omega} \cap \{ r \omega \,;\, r > 0, \omega \in O \}$ where $O$ is the union of all great circle segments in $S^2_+$ such that $\partial M_0 \setminus E$ is on one side of the hyperplane containing the great circle segment.
\end{thm}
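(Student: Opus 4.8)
The plan is to exploit the fact that the logarithmic weight $\phi(x) = \log\abs{x}$ becomes a \emph{linear} limiting Carleman weight after a conformal change of variables, which reduces Theorem \ref{thm_ks2} to a situation that parallels Theorem \ref{thm_ks1}, with the transverse Euclidean domain $\Omega_0 \subset \mR^2$ replaced by the spherical domain $M_0 \subset S^2_+$ and straight lines replaced by great-circle geodesics. First I would pass to polar coordinates $x = r\omega$ with $r > 0$ and $\omega \in S^2_+$ (the restriction to the upper hemisphere being forced by $\overline{\Omega} \subset \{x_3 > 0\}$) and set $t = \log r$. Then the Euclidean metric satisfies $dx^2 = e^{2t}(dt^2 + g_{S^2})$, so it is conformal to the cylinder metric $g = dt^2 + g_{S^2}$ on $\mR \times S^2_+$, and $\phi(x) = \log\abs{x} = t$ is the linear weight on the $\mR$-factor. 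Writing the conformal factor as $c = e^{2t}$ and using the transformation law of the conformal Laplacian in dimension $n = 3$, the substitution $v_j = e^{t/2} u_j$ turns $(-\Delta + q_j)u_j = 0$ into $(-\Delta_g + \tfrac{1}{8}R_g + c\,q_j)v_j = 0$ on $\Omega$ viewed as a subset of $\mR \times M_0$. Since the curvature term $\tfrac{1}{8}R_g$ is independent of $j$, recovering the difference of the transformed potentials is equivalent to recovering $c(q_1 - q_2)$, hence $q_1 - q_2$.

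Next I would use the integral identity of Lemma \ref{prop_integralidentity}, which turns the equality of partial Cauchy data into $\int_\Omega (q_1 - q_2) u_1 u_2 \, dx = 0$ for solutions $u_j$ whose boundary supports lie in $\Gamma_D$ and $\Gamma_N$ respectively. Into this identity I would insert the complex geometrical optics solutions of Section \ref{sec_cgo}, built with respect to the linear weight $t$: transported to the cylinder they have the form $v \approx e^{\mp\frac{1}{h}(t + i\psi)}\,e^{i\lambda t}\,a$, where $h \to 0$, $\lambda \in \mR$ is a fixed frequency in the $t$-direction, $\psi$ solves the transverse eikonal equation on $M_0$, and the amplitude $a$ is a Gaussian-beam-type quasimode concentrating on a fixed great-circle segment $\gamma \subset S^2_+$. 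Crucially, the Carleman estimate with boundary terms from Section \ref{sec_carleman} permits these solutions to be constructed so as to vanish on the inaccessible boundary $\Gamma_i$; the hypothesis $\Gamma_i \subset \{r\omega : \omega \in \partial M_0 \setminus E\}$ guarantees that $\Gamma_i$ projects into $\partial M_0 \setminus E$, so only geodesics whose endpoints avoid $\partial M_0 \setminus E$ contribute.

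Choosing the two weights with opposite signs so that the real exponential factors cancel in the product $u_1 u_2$, and letting $h \to 0$, the integral identity collapses: integrating in $t$ produces a Fourier transform of $f = q_1 - q_2$ in the $t$-variable, while the transverse Gaussian-beam limit localizes to $\gamma$. The result is that the \emph{mixed Fourier transform / local attenuated geodesic ray transform} of $f$ vanishes along every admissible great-circle segment in $M_0$; explicitly, the $\lambda$-attenuated geodesic ray transform of $\hat{f}(\lambda,\cdot)$ vanishes along every such $\gamma$, for every $\lambda$. Invoking the injectivity of this transform (Section \ref{sec_uniqueness}) then forces $\hat{f}(\lambda,\cdot)$ to vanish on the region $O$ swept out by the admissible segments, and inverting the Fourier transform in $t$ yields $f = 0$ on the conical set $\overline{\Omega} \cap \{r\omega : \omega \in O\}$.

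The main obstacle is precisely this last injectivity step, which is genuinely geometric. On the sphere the geodesics are segments of great circles, so local injectivity cannot rest on a global convexity hypothesis; instead it must be extracted from the one-sided visibility condition that $\partial M_0 \setminus E$ lie on one side of the plane containing $\gamma$, which is exactly the condition defining $O$. I would establish it by a Stefanov--Uhlmann--Vasy-type local argument: foliate $O$ by the admissible great-circle segments, and use the microlocal ellipticity (and analyticity) of the normal operator of the attenuated transform to propagate the vanishing of $\hat{f}(\lambda,\cdot)$ inward from $E$ along a convex foliation, the one-sided condition being what makes the foliation convex so that the layer-stripping closes up. The remaining points---the role of the tangential accessible piece $\Gamma_a$ and the matching of Dirichlet and Neumann supports on $\partial\Omega_- \cup \Gamma_a$ and $\partial\Omega_+ \cup \Gamma_a$---are handled exactly as in the proof of Theorem \ref{thm_ks1}.
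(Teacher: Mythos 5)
Your reduction is the same one the paper (and \cite{KSa}) uses: conformally rescale so that $\log\abs{x}$ becomes the linear weight $t=\log r$ on the cylinder $\mR\times S^2_+$, absorb the conformal factor into the potential via the conformal Laplacian, run the integral identity of Lemma \ref{prop_integralidentity} with CGO solutions whose transversal quasimodes concentrate on great circle segments avoiding $\partial M_0\setminus E$, and arrive at the vanishing of the mixed Fourier transform / attenuated geodesic ray transform of $q_1-q_2$ over all such segments. Up to that point your argument tracks Section \ref{sec_uniqueness} faithfully.

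The divergence is in the final injectivity step, and this is where your proposal has a real gap. The paper does \emph{not} invert the attenuated transform at fixed $\lambda$ by a convex-foliation/normal-operator argument. Instead it uses that a strictly convex subdomain of the open hemisphere is a \emph{simple surface with real-analytic metric}, for which an analogue of the Helgason support theorem for the \emph{unattenuated} geodesic ray transform is available, and it removes the attenuation exactly as in the proof of Theorem \ref{thm_ks1}: set $\lambda=0$, apply the support theorem, then differentiate the identity $\int e^{-2\lambda t}\hat{f}(2\lambda,\gamma(t))\,dt=0$ repeatedly in $\lambda$ at $\lambda=0$ to conclude that all $\xi_1$-derivatives of $\hat{f}$ vanish on the admissible region, and finish by analyticity of $\xi_1\mapsto\hat{f}(\xi_1,\cdot)$ (which holds because $q_1-q_2$ has compact support in $t$). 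Two concrete problems with your alternative: (i) the transversal manifold here is two-dimensional, so the smooth-category Stefanov--Uhlmann--Vasy local support theorem, which requires transversal dimension at least three, does not apply; in 2D one must fall back on analytic microlocal analysis, i.e.\ on precisely the Helgason-type support theorem for simple real-analytic surfaces that the paper invokes, so your ``layer stripping'' would end up re-deriving that tool; (ii) support theorems for the \emph{attenuated} transform at fixed nonzero $\lambda$ are not off-the-shelf, whereas the $\lambda$-differentiation trick (from \cite{DKLSa}) reduces everything to the unattenuated case. Note also that the ``one side of the hyperplane'' condition defining $O$ is the exact spherical analogue of ``lines missing the convex hull of $\partial\Omega_0\setminus E$'' in Theorem \ref{thm_ks1}: it is the hypothesis under which the support theorem applies segment by segment, not a foliation hypothesis.
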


The final theorem involves the weight $\phi(x) = {\rm arg}(x_1+i x_2)$ and does not have a counterpart in the previous results. It corresponds to a case where the inaccessible part of the boundary is part of a surface of revolution.

\begin{thm} \label{thm_ks3}
Let $\Omega \subset \{ x_1 > 0 \}$ and let $\phi(x) = {\rm arg}(x_1+i x_2)$. 
Let $S = \{ (x_1,0,x_3) \,;\, x_1 > 0 \}$ be a half-plane with hyperbolic geodesics given by the half-circles (with $R > 0$ and $\alpha \in \mR$) 
$$
(x_1(t), x_2(t), x_3(t)) = (R \sin t, 0, R \cos t + \alpha), \quad t \in (0,\pi)
$$
or the lines 
$$
(x_1(t), x_2(t), x_3(t)) = (t, 0, \alpha), \quad t > 0.
$$
Let $M_0$ be a compact subdomain of $S$ with smooth boundary, and let $E$ be an open subset of $\partial M_0$ such that $\Gamma_i$ satisfies 
$$
\Gamma_i \subset \{ R_{\theta}(\partial M_0 \setminus E) \,;\, \theta \in (-\pi, \pi) \}
$$
where $R_{\theta}x = (\tilde{R}_{\theta}(x_1,x_2), x_3)$ and $\tilde{R}_{\theta}$ rotates vectors in $\mR^2$ by angle $\theta$ counterclockwise. That is, we assume that the inaccessible part $\Gamma_i$ is contained in a surface of revolution obtained by rotating the boundary curve $\partial M_0 \setminus E$ about the $x_3$-axis.

If $q_1, q_2 \in C(\overline{\Omega})$ and if 
$$
C_{q_1}^{\Gamma_D,\Gamma_N} = C_{q_2}^{\Gamma_D,\Gamma_N},
$$
then $q_1 = q_2 \text{ in } \overline{\Omega} \cap \{ R_{\theta}(O) \,;\, \theta \in (-\pi,\pi) \}$ where $O$ is the union of all geodesics in $S$ that have $\partial M_0 \setminus E$ on one side.
\end{thm}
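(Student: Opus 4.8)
The plan is to follow the general scheme laid out in the introduction (Sections \ref{sec_strategy}--\ref{sec_uniqueness}) specialized to the limiting Carleman weight $\phi(x) = {\rm arg}(x_1+ix_2)$. First I would reduce the equality of Cauchy data sets to an integral identity: since $\Gamma_D=\partial\Omega_-\cup\Gamma_a$ and $\Gamma_N=\partial\Omega_+\cup\Gamma_a$, the hypothesis $C_{q_1}^{\Gamma_D,\Gamma_N}=C_{q_2}^{\Gamma_D,\Gamma_N}$ yields (via the integral identity alluded to in Lemma \ref{prop_integralidentity}) that
$$
\int_{\Omega}(q_1-q_2)\,u_1 u_2\,dx = 0
$$
for all $u_j\in H_{\Delta}(\Omega)$ solving $(-\Delta+q_j)u_j=0$ whose boundary traces are suitably supported away from $\Gamma_i$. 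The second step is to insert complex geometrical optics solutions concentrated with respect to $\phi$. For $\phi(x)={\rm arg}(x_1+ix_2)$ the natural conjugate weight is $\tilde\phi(x)=\tfrac12\log(x_1^2+x_2^2)$, and the level sets of $(\phi,\tilde\phi)$ foliate $\{x_1>0\}$ by the half-planes obtained by rotating $S$ about the $x_3$-axis. The key geometric point is that this weight is associated, after a conformal change, to the hyperbolic geodesics on $S$ described in the statement, so that the CGO construction reduces to a transport equation along those geodesics.

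The heart of the argument is the CGO construction of Section \ref{sec_cgo}: one needs solutions $u_j = e^{(\tilde\phi\pm i\phi)/h}(a_j+r_j)$ with amplitudes $a_j$ obeying a $\bar\partial$-type transport equation in the variables transverse to the foliation and with remainders $r_j=O(h)$, and crucially with the correct vanishing properties on $\partial\Omega$ so that their traces are supported in $\partial\Omega_-\cup\Gamma_a$ and $\partial\Omega_+\cup\Gamma_a$ respectively. This vanishing is exactly what the boundary Carleman estimate of Section \ref{sec_carleman} delivers: the assumption $\Gamma_i\subset\{R_\theta(\partial M_0\setminus E)\}$ guarantees that $\Gamma_i$ lies in a surface of revolution built from the geodesic-boundary set, which is precisely the tangential/inaccessible configuration the Carleman estimate is designed to handle, allowing the boundary terms over $\Gamma_i$ to be controlled. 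Passing to the limit $h\to 0$ in the integral identity and using stationary phase, the products $u_1u_2$ localize onto individual rotated half-planes, so the identity collapses to the statement that a certain attenuated geodesic ray transform of $q_1-q_2$ along the hyperbolic geodesics in $S$ vanishes, for geodesics lying in the accessible region.

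The final step is the injectivity of this mixed transform, as in Section \ref{sec_uniqueness}: after taking a Fourier transform in the rotation angle $\theta$ (the symmetry direction of the foliation) one is left, on each half-plane $S$, with a \emph{local} attenuated geodesic ray transform on the hyperbolic half-plane, restricted to those geodesics having $\partial M_0\setminus E$ on one side. Injectivity of this local ray transform forces $q_1-q_2=0$ on the set swept out by such geodesics, which is exactly $\overline{\Omega}\cap\{R_\theta(O)\,;\,\theta\in(-\pi,\pi)\}$ with $O$ the union of all geodesics in $S$ having $\partial M_0\setminus E$ on one side. I expect the main obstacle to be the CGO step, and specifically matching the boundary-vanishing conditions to the geometry of $\Gamma_i$: one must verify that the surface-of-revolution hypothesis on $\Gamma_i$ puts it in the part of $\partial\Omega_{\rm tan}(\phi)$ where the Carleman boundary terms carry a favorable sign, and simultaneously construct amplitudes with Gaussian-beam-type concentration along hyperbolic geodesics that still solve the transport equation globally across $\Omega$. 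The ray-transform injectivity, by contrast, should follow from the local invertibility results for geodesic ray transforms on simple (here, hyperbolic) surfaces that underlie Section \ref{sec_uniqueness}.
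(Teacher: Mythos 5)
Your proposal follows essentially the same route as the paper, which itself only sketches Theorem \ref{thm_ks3} at the end of Section \ref{sec_uniqueness}: the integral identity of Lemma \ref{prop_integralidentity}, CGO solutions for the arg weight after a conformal scaling that turns $\Omega$ into a subset of $\mR_\theta \times M_0$ with $M_0$ a domain in the hyperbolic half-plane, the boundary Carleman estimate to keep the traces supported away from $\Gamma_i$, and finally a Fourier transform in $\theta$ combined with a Helgason-type support theorem for the attenuated geodesic ray transform on the real-analytic simple transversal manifold. The only slips are cosmetic: in $u_j=e^{(\tilde\phi\pm i\phi)/h}(a_j+r_j)$ the roles of $\phi$ and its conjugate are interchanged (the real exponential must carry the Carleman weight $\phi=\mathrm{arg}(x_1+ix_2)$, so that $\theta$ plays the role of $x_1$ in Theorem \ref{thm_ks1}), and the products $u_1u_2$ localize on the surface of revolution of a single hyperbolic geodesic rather than on individual rotated half-planes.
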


Note that the above results are local results that allow to determine the unknown potential locally away from the inaccessible part $\Gamma_i$. The paper \cite{KSa} also contains results where one obtains information on the potential near the inaccessible part $\Gamma_i$, but this information comes in the form of integrals along broken geodesic rays in the transversal manifold. In the Euclidean case, a continuous curve $\gamma: [0,L] \to \overline{\Omega}_0$ is called a \emph{nontangential broken ray} if $\gamma$ is obtained by following straight lines that are reflected in the standard way (angle of incidence = angle of reflection) whenever they hit $\partial \Omega_0$, all reflections are nontangential, and all reflection points are distinct. One also needs a somewhat stronger assumption on the sets $\Gamma_D$ and $\Gamma_N$ (this assumption was made for simplicity and could be removed in many cases).

Let us state a counterpart of Theorem \ref{thm_ks1} which involves the broken ray transform with exponential attenuation. Note that in the absence of the stronger assumption on $\Gamma_D$ and $\Gamma_N$, this result would generalize Theorem \ref{thm_ks1}.

\begin{thm} \label{thm_ks4}
Suppose that $\Omega \subset \mR \times \Omega_0$ where $\Omega_0$ is a bounded open set with $C^{\infty}$ boundary in $\mR^2$. Let $\phi(x) = x_1$, and let $E$ be an open subset of $\partial \Omega_0$ such that $\Gamma_i$ satisfies 
$$
\Gamma_i \subset \mR \times (\partial \Omega_0 \setminus E).
$$
Let $q_1, q_2 \in C(\overline{\Omega})$, let $\Gamma$ be a neighborhood of $\overline{\partial \Omega_+ \cup \partial \Omega_- \cup \Gamma_a}$, and assume that 
$$
C_{q_1}^{\Gamma,\Gamma} = C_{q_2}^{\Gamma,\Gamma}.
$$
Then for any nontangential broken ray $\gamma: [0,L] \to \overline{\Omega}_0$ with endpoints on $E$, and given any real number $\lambda$, one has 
$$
\int_0^L e^{-2\lambda t} (q_1-q_2)\ehat(2\lambda,\gamma(t)) \,dt = 0.
$$
Here $(\,\cdot\,)\ehat$ is the Fourier transform in the $x_1$ variable, and $q_1-q_2$ is extended by zero to $\mR^3 \setminus \overline{\Omega}$.
\end{thm}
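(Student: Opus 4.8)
\section*{Proof proposal}

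The plan is to combine the integral identity of Lemma~\ref{prop_integralidentity} with complex geometrical optics solutions concentrating on the broken ray $\gamma$, built from the CGO construction of Section~\ref{sec_cgo} and the Carleman estimate with boundary terms of Section~\ref{sec_carleman}. Since $C_{q_1}^{\Gamma,\Gamma} = C_{q_2}^{\Gamma,\Gamma}$, Lemma~\ref{prop_integralidentity} gives
$$
\int_{\Omega} (q_1-q_2) u_1 u_2 \,dx = 0
$$
for all $u_j \in H_{\Delta}(\Omega)$ solving $(-\Delta+q_j)u_j = 0$ in $\Omega$ with $\supp(u_j|_{\partial \Omega}) \subset \Gamma$. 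The whole argument consists of choosing a semiclassical family $u_1 = u_1(h)$, $u_2 = u_2(h)$ of such solutions and evaluating the limit of the identity as $h \to 0$.

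Writing $x = (x_1,x')$ with $x' \in \Omega_0$ and $\Delta = \d_{x_1}^2 + \Delta_{x'}$, I would seek solutions of the separated form $u_1 = e^{(\frac{1}{h}-i\lambda)x_1} v_1$ and $u_2 = e^{-(\frac{1}{h}+i\lambda)x_1} v_2$, so that the product carries the factor $e^{-2i\lambda x_1}$ responsible for the Fourier transform at frequency $2\lambda$, and each $v_j$ solves the transversal equation $(-\Delta_{x'} - \tau_j^2 + q_j)v_j = 0$ with $\tau_1 = \frac{1}{h}-i\lambda$, $\tau_2 = \frac{1}{h}+i\lambda$ (the $x_1$-dependence of $q_j$ makes the separation only approximate, but $q_j = O(1)$ affects only lower-order amplitudes, not the leading phase and attenuation). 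The functions $v_j$ are Gaussian beam quasimodes for the transversal Dirichlet operator on $\Omega_0$ concentrating on $\gamma$: $v_1$ following $\gamma$ in the forward direction and $v_2$ in the reverse direction, each reflecting off $\partial \Omega_0$ at the (distinct, nontangential) reflection points of $\gamma$. With this choice the real parts of the two transversal phases cancel along $\gamma$, so the product does \emph{not} oscillate along the ray, while off $\gamma$ the imaginary part of the combined phase is positive and forces concentration with width $\sqrt{h}$. The complex energies $\tau_j^2 = \frac{1}{h^2} \mp \frac{2i\lambda}{h} - \lambda^2$ contribute extra terms $\pm 2\lambda$ at order $1/h$ to the two transport equations along $\gamma$; solving these in the forward and reverse directions yields amplitudes whose product carries exactly the factor $e^{-2\lambda t}$ (after fixing orientations, the sign being immaterial since $\lambda$ is arbitrary).

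To be admissible in the identity the solutions must satisfy $\supp(u_j|_{\partial \Omega}) \subset \Gamma$, that is, they must vanish on the inaccessible part $\Gamma_i \subset \mR \times (\partial \Omega_0 \setminus E)$. Here the Dirichlet reflection is the key mechanism: the beam is constructed to reflect off $\partial \Omega_0$ in the manner appropriate to the Dirichlet problem, so that its trace vanishes to leading order on the reflecting lateral boundary, in particular on $\Gamma_i$; its (nonzero) trace on $\partial \Omega_{\pm}$ is harmless since those sets lie in $\Gamma$. A correction term $r_j$ then removes the lower-order residual trace and enforces the support condition exactly. I would obtain $r_j$ by solving an inhomogeneous equation with the required vanishing on $\Gamma_i$, the solvability together with the bound $\norm{r_j} = o(1)$ relative to the main term following from the adjoint Carleman estimate by the usual Hahn--Banach/duality argument. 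The geometric condition $\Gamma_i \subset \mR \times (\partial \Omega_0 \setminus E)$, together with $\gamma$ having endpoints on $E$, guarantees compatibility of the reflecting beam with this vanishing requirement, and the stronger hypothesis that $\Gamma$ be a neighborhood of $\overline{\d \Omega_+ \cup \d \Omega_- \cup \Gamma_a}$ is what makes the correction step go through cleanly.

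Finally I would substitute $u_1, u_2$ into the identity: the $x_1$-integration produces $(q_1-q_2)\ehat(2\lambda,x')$ (the potentials being extended by zero), and the remaining transversal integral, localized by the Gaussian concentration to an $O(\sqrt{h})$ neighborhood of $\gamma$, is evaluated by stationary phase. As $h \to 0$ the corrections $r_j$ drop out and, up to a nonzero normalizing constant, one is left with $\int_0^L e^{-2\lambda t} (q_1-q_2)\ehat(2\lambda,\gamma(t)) \,dt$, which must vanish; continuity of the $q_j$ ensures this limiting line integral is well defined. I expect the main obstacle to be the construction of the reflecting Gaussian beam quasimode on $\Omega_0$: one must propagate the beam through the successive reflections off $\partial \Omega_0$ while preserving positivity of the imaginary part of the transverse phase, so that concentration survives each reflection, and simultaneously arrange via the correction term that the global solution vanishes on $\Gamma_i$. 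The hypotheses that the reflections be nontangential and the reflection points distinct are precisely what keep the beam nondegenerate and prevent focusing at the reflection points.
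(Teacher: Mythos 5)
Your proposal matches the approach the paper itself indicates for Theorem \ref{thm_ks4}: the integral identity of Lemma \ref{prop_integralidentity}, CGO solutions $e^{\mp s x_1}$ with slightly complex frequency $s=\tau+i\lambda$ producing the factors $e^{-2i\lambda x_1}$ and $e^{-2\lambda t}$, transversal Gaussian beam quasimodes reflecting off $\partial\Omega_0$ with Dirichlet-type cancellation so as to be small on $\Gamma_i$, a correction term obtained from the Carleman estimate, and passage to the limit $h\to 0$. The paper only sketches this argument and defers the reflected Gaussian beam construction to \cite{KSa}, which is also the step you correctly identify as the main technical obstacle, so your outline is at essentially the same level of detail and along the same lines as the paper's.
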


It follows from this result that one could recover the unknown potential also near $\Gamma_i$ if one knew how to invert the broken ray transform, that is, to recover a function in $\overline{\Omega}_0$ from its (attenuated) integrals over nontangential broken rays with endpoints on a given open subset $E \subset \partial \Omega_0$. This is a question of independent interest and there seem to be only partial results in this direction, see \cite{Eskin_obstacles}, \cite{Ilmavirta}.

\section{Strategy of proof} \label{sec_strategy}

In this section we give an outline of the proof of the partial data results described above. The proofs proceed in three steps:
\begin{enumerate}
\item[1.]
Via an integral identity, the partial data uniqueness question is reduced to showing that products of solutions of Schr\"odinger equations, which vanish on suitable parts of the boundary, are dense in $L^1(\Omega)$.
\item [2.]
Construction of a family of special complex geometrical optics solutions to Schr\"odinger equations, and showing that certain transforms of any function that is $L^2$-orthogonal to products of complex geometrical optics solutions must vanish.
\item[3.] 
Showing that the transforms arising in Step 2 are injective, which completes the proof that products of solutions are dense and also the uniqueness proof.
\end{enumerate}
This general outline roughly applies to all of the results in Section \ref{sec_results}, but the particulars (the choice of complex geometrical optics solutions, and the transforms in the final step) vary from case to case.

\bigskip

\noindent 3.1. {\bf Reduction to the density of products of solutions.}
This step is achieved by the following basic integral identity.

\begin{lemma} \label{prop_integralidentity}
If $\Gamma_D, \Gamma_N \subset \partial \Omega$ are open and if $C_{q_1}^{\Gamma_D, \Gamma_N} = C_{q_2}^{\Gamma_D, \Gamma_N}$, then 
$$
\int_\Omega (q_1-q_2) u_1 u_2 \,dx = 0
$$
for any $u_j \in H_{\Delta}(\Omega)$ satisfying $(-\Delta + q_j) u_j = 0$ in $\Omega$ and 
$$
\supp(u_1|_{\partial \Omega}) \subset \Gamma_D, \quad \supp(u_2|_{\partial \Omega}) \subset \Gamma_N.
$$
\end{lemma}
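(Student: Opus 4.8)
The plan is to use the hypothesis $C_{q_1}^{\Gamma_D,\Gamma_N} = C_{q_2}^{\Gamma_D,\Gamma_N}$ to manufacture, from the given $q_1$-solution $u_1$, a companion $q_2$-solution with matching boundary data, and then to convert the bulk integral into boundary pairings that are forced to vanish. Concretely, since $u_1 \in H_{\Delta}(\Omega)$ solves $(-\Delta+q_1)u_1=0$ with $\supp(u_1|_{\partial\Omega})\subset\Gamma_D$, its restricted Cauchy data $(u_1|_{\Gamma_D},\partial_\nu u_1|_{\Gamma_N})$ lies in $C_{q_1}^{\Gamma_D,\Gamma_N}=C_{q_2}^{\Gamma_D,\Gamma_N}$. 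Hence there is a $q_2$-solution $\tilde u_1\in H_{\Delta}(\Omega)$ with $\supp(\tilde u_1|_{\partial\Omega})\subset\Gamma_D$ and
$$
u_1|_{\Gamma_D}=\tilde u_1|_{\Gamma_D},\qquad \partial_\nu u_1|_{\Gamma_N}=\partial_\nu \tilde u_1|_{\Gamma_N}.
$$

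Next I would pass to the difference $w := u_1-\tilde u_1$. Since both traces are supported in $\Gamma_D$ and agree there, they agree on all of $\partial\Omega$, so $w|_{\partial\Omega}=0$; the matching Neumann data gives $\partial_\nu w|_{\Gamma_N}=0$; and subtracting the two equations yields $(-\Delta+q_2)w=(q_2-q_1)u_1$, whose right-hand side lies in $L^2(\Omega)$ because $q_j\in L^\infty(\Omega)$ and $u_1\in L^2(\Omega)$. The crucial gain in regularity occurs here: $w$ has $L^2$ Laplacian and \emph{vanishing} Dirichlet trace, so elliptic regularity for the Dirichlet problem upgrades $w$ from $H_{\Delta}(\Omega)$ to $H^2(\Omega)$, whence $\partial_\nu w\in H^{1/2}(\partial\Omega)$ is a genuine function that pairs cleanly against $u_2|_{\partial\Omega}\in H^{-1/2}(\partial\Omega)$.

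With this in hand I would compute, using $q_2 u_2=\Delta u_2$,
$$
\int_\Omega (q_1-q_2)u_1 u_2\,dx = -\int_\Omega \big[(-\Delta+q_2)w\big]\,u_2\,dx = \int_\Omega \big[(\Delta w)\,u_2 - w\,(\Delta u_2)\big]\,dx,
$$
and then apply the generalized Green's formula, valid for the pair $w\in H^2(\Omega)$, $u_2\in H_{\Delta}(\Omega)$, to get
$$
\int_\Omega (q_1-q_2)u_1 u_2\,dx = \langle \partial_\nu w,\, u_2|_{\partial\Omega}\rangle - \langle w|_{\partial\Omega},\, \partial_\nu u_2\rangle.
$$
The second pairing vanishes since $w|_{\partial\Omega}=0$, and the first vanishes since $\supp(u_2|_{\partial\Omega})\subset\Gamma_N$ localizes it to $\Gamma_N$, where $\partial_\nu w=0$; this gives the claimed identity.

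The main obstacle, and essentially the only delicate point, is the regularity bookkeeping: a priori the solutions live only in the weak space $H_{\Delta}(\Omega)$, whose traces lie in negative-order Sobolev spaces (by the result quoted from \cite{BU}, $u_2|_{\partial\Omega}\in H^{-1/2}$ and $\partial_\nu u_2|_{\partial\Omega}\in H^{-3/2}$), so the boundary products in Green's formula are not meaningful as written. The resolution, which is the whole content of the argument, is that the difference $w$ solves a Dirichlet problem with $L^2$ data and therefore gains two derivatives; after this the pairings become honest dualities and the support conditions on $\Gamma_D,\Gamma_N$ eliminate the boundary terms. I would take care to justify the weak Green's formula for $(H^2,H_{\Delta})$ by a standard density argument.
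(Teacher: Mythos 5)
Your proposal is correct and follows essentially the same route as the paper: produce a companion $q_2$-solution $\tilde u_1$ with matching partial Cauchy data, pass to the difference $w=u_1-\tilde u_1$, use the vanishing Dirichlet trace to upgrade $w$ to $H^2(\Omega)$, and kill the boundary terms in the (weak) Green's formula via the support conditions. The only difference is cosmetic bookkeeping in how the bulk integrand is rewritten before integrating by parts.
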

\begin{proof}
Let $u_j$ be as stated. Since $C_{q_1}^{\Gamma_D, \Gamma_N} = C_{q_2}^{\Gamma_D, \Gamma_N}$, there is a function $\tilde{u}_2 \in H_{\Delta}(\Omega)$ with $(-\Delta + q_2) \tilde{u}_2 = 0$ in $\Omega$, $\supp(\tilde{u}_2|_{\partial \Omega}) \subset \Gamma_D$, and 
$$
(u_1|_{\Gamma_D}, \partial_{\nu} u_1|_{\Gamma_N}) = (\tilde{u}_2|_{\Gamma_D}, \partial_{\nu} \tilde{u}_2|_{\Gamma_N}).
$$
Using that $u_1$, $u_2$ and $\tilde{u}_2$ are solutions, we have 
\begin{align*}
\int_\Omega (q_1-q_2) u_1 u_2 \,dx &= \int_\Omega \left[ (\Delta u_1) u_2 - u_1 (\Delta u_2) \right] \,dx \\
 &= \int_\Omega \left[ (\Delta (u_1-\tilde{u}_2)) u_2 - (u_1-\tilde{u}_2) (\Delta u_2) \right] \,dx.
\end{align*}
Now $u_1-\tilde{u}_2|_{\partial \Omega} = 0$, so in fact $u_1-\tilde{u}_2 \in H^2(\Omega)$ by the properties of the space $H_{\Delta}(\Omega)$, see \cite{BU}. Recall also that $C^{\infty}(\overline{\Omega})$ is dense in $H_{\Delta}(\Omega)$ and that $u_2|_{\partial \Omega} \in H^{-1/2}(\partial \Omega)$ and $\partial_{\nu} u_2|_{\partial \Omega} \in H^{-3/2}(\partial \Omega)$. These facts make it possible to integrate by parts, and we obtain that 
$$
\int_\Omega (q_1-q_2) u_1 u_2 \,dx = \int_{\partial \Omega} \left[ (\partial_{\nu} (u_1-\tilde{u}_2)) u_2 - (u_1-\tilde{u}_2) (\partial_{\nu} u_2) \right] \,dS
$$
in the weak sense. The last expression vanishes since $\partial_{\nu} (u_1-\tilde{u}_2)|_{\Gamma_N} = 0$ and $\supp(u_2|_{\partial \Omega}) \subset \Gamma_N$.
\end{proof}

The next statement makes precise the reduction of the partial data problem to density of products.

\begin{clry} \label{corollary_reduction_density}
Let $\Omega \subset \mR^n$ be a bounded open set with $C^{\infty}$ boundary, let $q_1, q_2 \in L^{\infty}(\Omega)$, and let $\Gamma_D$ and $\Gamma_N$ be nonempty open subsets of $\partial \Omega$. If the set 
\begin{align*}
\{ u_1 u_2 \ ;\  &u_j \in H_{\Delta}(\Omega), \ \ (-\Delta + q_j) u_j = 0 \text{ in }\Omega, \\
 & \supp(u_1|_{\partial \Omega}) \subset \Gamma_D, \ \ \supp(u_2|_{\partial \Omega}) \subset \Gamma_N \}
\end{align*}
is dense in $L^1(\Omega)$, then the condition $C_{q_1}^{\Gamma_D, \Gamma_N} = C_{q_2}^{\Gamma_D, \Gamma_N}$ implies $q_1 = q_2$.
\end{clry}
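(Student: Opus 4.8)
The plan is to deduce the corollary directly from Lemma~\ref{prop_integralidentity} together with the $L^1$--$L^\infty$ duality. First I would note that the hypothesis $C_{q_1}^{\Gamma_D,\Gamma_N} = C_{q_2}^{\Gamma_D,\Gamma_N}$ is exactly what Lemma~\ref{prop_integralidentity} requires, so that identity yields $\int_\Omega (q_1-q_2) u_1 u_2\,dx = 0$ for every admissible pair $u_1,u_2$, i.e.\ for every $u_j \in H_\Delta(\Omega)$ solving $(-\Delta+q_j)u_j=0$ in $\Omega$ with $\supp(u_1|_{\partial\Omega})\subset\Gamma_D$ and $\supp(u_2|_{\partial\Omega})\subset\Gamma_N$. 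Since each $u_j \in L^2(\Omega)$, Cauchy--Schwarz shows that every product $u_1 u_2$ lies in $L^1(\Omega)$, so the set displayed in the statement is genuinely a subset of $L^1(\Omega)$ and the density hypothesis is meaningful.

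Next I would encode this orthogonality relation as the vanishing of a single bounded functional. Setting $f = q_1 - q_2 \in L^\infty(\Omega)$ and using the duality $(L^1(\Omega))^* = L^\infty(\Omega)$, the function $f$ defines a continuous linear functional $g \mapsto \int_\Omega f g\,dx$ on $L^1(\Omega)$. The identity from the previous step says precisely that this functional annihilates every element of the product set.

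Finally, since the product set is dense in $L^1(\Omega)$ by hypothesis, a continuous functional vanishing on it must vanish on all of $L^1(\Omega)$; hence $\int_\Omega (q_1-q_2)g\,dx = 0$ for every $g\in L^1(\Omega)$, which forces $q_1 - q_2 = 0$ almost everywhere, that is $q_1 = q_2$. There is no genuine obstacle in this argument: all of the analytic content is already carried by Lemma~\ref{prop_integralidentity} and by the density statement that is assumed as the hypothesis, and the remaining step is only the elementary observation that an $L^\infty$ function pairing to zero against a dense subspace of $L^1(\Omega)$ must be the zero function.
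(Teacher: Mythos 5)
Your argument is correct and is exactly the intended one: the paper states this as an immediate corollary of Lemma~\ref{prop_integralidentity}, with the same reduction to the vanishing of the $L^\infty$ functional $g \mapsto \int_\Omega (q_1-q_2)g\,dx$ on a dense subset of $L^1(\Omega)$.
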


\bigskip

\noindent 3.2. {\bf Complex geometrical optics solutions.}
To show the density claim in Corollary \ref{corollary_reduction_density}, one constructs special solutions $u_j \in H_{\Delta}(\Omega)$ of the equations $(-\Delta+q_j)u_j = 0$ in $\Omega$ with the required boundary conditions. This will involve complex geometrical optics solutions having the form 
\begin{align*}
u_1 &= e^{-\tau \phi}(m_1 + r_1), \\
u_2 &= e^{\tau \phi}(m_2 + r_2)
\end{align*}
where $\tau > 0$ is a large parameter, $\varphi$ is a real valued weight function, $m_1$ and $m_2$ are complex amplitudes, and $r_1$ and $r_2$ are correction terms that are small when $\tau$ is large. The solutions $u_1$ and $u_2$ are chosen with opposite signs for the exponential ($e^{-\tau \phi}$ and $e^{\tau \phi}$) to make sure that these exponentials go away in the product, allowing one to consider the limit of $u_1 u_2$ as $\tau \to \infty$.

The point is to choose the amplitudes $m_1$, $m_2$ so that $\Delta(e^{-\tau \phi} m_1) \approx 0$ and $\Delta(e^{\tau \phi} m_2) \approx 0$ in a suitable sense, ensuring that the approximate solutions $e^{-\tau \phi} m_1$ and $e^{\tau \phi} m_2$ can be corrected to exact solutions of $(-\Delta+q_j) u_j = 0$. The complex amplitudes often involve phase factors $e^{i\tau \psi_j}$, so the solutions contain complex exponentials $e^{\tau (-\phi+i\psi_1)}$ and $e^{\tau(\phi + i\psi_2)}$. This explains why these are called complex geometrical optics solutions.

The choice of the weight function $\phi$ is crucial in this process. The correction terms $r_1$ and $r_2$ are obtained by solving the equations 
\begin{align*}
e^{\tau \phi}(-\Delta+q_1)(e^{-\tau \phi} r_1) &= f_1 \quad \text{in } \Omega, \\
e^{-\tau \phi}(-\Delta+q_2)(e^{\tau \phi} r_2) &= f_2 \quad \text{in } \Omega
\end{align*}
where 
\begin{align*}
f_1 &= -e^{\tau \phi}(-\Delta+q_1)(e^{-\tau \phi} m_1), \\
f_2 &= -e^{-\tau \phi}(-\Delta+q_2)(e^{\tau \phi} m_2).
\end{align*}
Thus one would like to have solvability results for the conjugated operators $e^{\tau \phi}(-\Delta+q_1)e^{-\tau \phi}$ and $e^{-\tau \phi}(-\Delta+q_2)e^{\tau \phi}$ that come with estimates such as 
$$
\norm{r_j}_{L^2(\Omega)} \leq C \tau^{-1} \norm{f_j}_{L^2(\Omega)},
$$
for a constant $C$ uniform over all sufficiently large $\tau$. Additionally, one needs some control of the boundary values of $r_1$ and $r_2$ on parts of the boundary.

The above procedure can be carried out if $\phi$ is a \emph{limiting Carleman weight}. We refer to \cite{KSU}, \cite{DKSaU} for more information about the microlocal condition characterizing these weights that ensures the right kind of solvability results. However, as discussed in Section \ref{sec_results}, limiting Carleman weights are just harmonic functions (with some restriction on their critical points) if $n=2$, and in $\mR^n$ with $n \geq 3$ there are only six possibilities up to translation and scaling. Moreover, as will be explained in Section \ref{sec_carleman} below, one can expect relatively good control of the boundary values of $r_1$ on $\partial \Omega_+(\phi)$ and weak control on $\partial \Omega_{\rm tan}(\phi)$, and similarly relatively good control of the boundary values of $r_2$ on $\partial \Omega_-(\phi)$ and weak control on $\partial \Omega_{\rm tan}(\phi)$ (this is due to the sign in the exponential $e^{\mp \tau \phi}$). This will follow from a Carleman estimate (a weighted $L^2$ estimate with exponential weights depending on the large parameter $\tau$) with boundary terms.

It remains to describe the construction of the amplitudes $m_1$ and $m_2$. The following three conditions are typical ones that one might like the amplitudes to satisfy:
\begin{enumerate}
\item[1.]
$\norm{e^{\tau \phi} (-\Delta+q_1)(e^{-\tau \phi} m_1)}_{L^2(\Omega)} = \norm{e^{-\tau \phi} (-\Delta+q_2)(e^{\tau \phi} m_2)}_{L^2(\Omega)} = O(1)$ as $\tau \to \infty$,
\item[2.]
$\norm{m_1}_{L^2(\partial \Omega_+(\phi))} = \norm{m_2}_{L^2(\partial \Omega_-(\phi))} = O(1)$, and $\norm{m_j}_{L^2(\partial \Omega_{\rm tan}(\phi))} = o(1)$ as $\tau \to \infty$, 
\item[3.]
the set of limits $\{ \lim_{\tau \to \infty} m_1 m_2 \}$ for all such $m_1$ and $m_2$ is a dense set in $L^1(\Omega)$.
\end{enumerate}
The first condition means that $e^{-\tau \phi} m_1$ and $e^{\tau \phi} m_2$ are approximate solutions in a suitable sense, allowing to find correction terms $r_j$ with $\norm{r_j}_{L^2(\Omega)} = o(1)$ as $\tau \to \infty$. The second condition comes from the fact that one wants roughly that 
$$
u_1|_{\partial \Omega_+ \cup \partial \Omega_{\rm tan}} = 0, \qquad u_2|_{\partial \Omega_- \cup \partial \Omega_{\rm tan}} = 0,
$$
or equivalently 
$$
r_1|_{\partial \Omega_+ \cup \partial \Omega_{\rm tan}} = -m_1|_{\partial \Omega_+ \cup \partial \Omega_{\rm tan}}, \qquad r_2|_{\partial \Omega_- \cup \partial \Omega_{\rm tan}} = -m_2|_{\partial \Omega_- \cup \partial \Omega_{\rm tan}}.
$$
Since the Carleman estimate with boundary terms will give relatively good control of the boundary values of $r_j$ on $\partial \Omega_+$ or $\partial \Omega_-$ and weak control on $\partial \Omega_{\rm tan}$, it is enough that $m_j$ are bounded on $\partial \Omega_+$ or $\partial \Omega_-$ but they need to be small as $\tau \to \infty$ (or vanish completely) on $\partial \Omega_{\rm tan}$. The third condition follows since 
$$
\lim_{\tau \to \infty} u_1 u_2 = \lim_{\tau \to \infty} m_1 m_2.
$$
Thus, to show density of the products $u_1 u_2$, it is enough to show density of the set $\{ \lim_{\tau \to \infty} m_1 m_2 \}$ for all admissible $m_1$ and $m_2$. This leads to the transforms considered in the final step.

\bigskip

\noindent 3.3. {\bf Injectivity of transforms.}
In the density of products approach, one needs to show that any function $f$ in, say, $C(\overline{\Omega})$, for which 
$$
\int_{\Omega} f u_1 u_2 \,dx = 0
$$
for all solutions $u_j \in H_{\Delta}(\Omega)$ of $(-\Delta+q_j)u_j = 0$ in $\Omega$, must satisfy $f = 0$. By the discussion in this section, one can try to do this by constructing complex geometrical optics solutions with amplitudes $m_j = m_j(\,\cdot\,;\tau,\alpha)$ depending on the large parameter $\tau$ and some additional parameters described by $\alpha$ which ranges over some parameter set $A$. Assume that for each $\alpha$, there is a bounded measure $\mu_{\alpha}$ on $\Omega$ such that 
$$
\lim_{\tau \to \infty} m_1(\,\cdot\,;\tau,\alpha) m_2(\,\cdot\,;\tau,\alpha) = \mu_{\alpha}
$$
in the weak topology of measures in $\Omega$. The orthogonality condition then implies that 
$$
\int_{\Omega} f \,d\mu_{\alpha} = 0, \quad \alpha \in A.
$$
That is, the transform $Tf$ of $f$ vanishes, where 
$$
Tf(\alpha) = \int_{\Omega} f \,d\mu_{\alpha}, \quad \alpha \in A.
$$
If this transform is injective, then $f=0$ and the density of products follows.

Many different transforms have appeared in this connection. In the full data result of \cite{SU}, the transform $T$ was just the usual Fourier transform. In the partial data result \cite{BU} one obtained the Fourier transform in a cone, and also Isakov's approach \cite{I} results in the Fourier transform. The partial data result \cite{KSU} involves a more complicated transform whose injectivity was proved by analytic microlocal analysis, and this argument was simplified in \cite{DKSjU07} where matters reduce to inverting the two-plane Radon transform. In the two-dimensional case, the transform appearing \cite{Bu} and \cite{IUY} is related to stationary phase. Finally, in \cite{DKSaU}, \cite{KSa} a mixed transform involving the Fourier transform in one direction and X-ray transforms (or more generally attenuated geodesic ray transforms) in other directions was employed.

\section{Carleman estimates} \label{sec_carleman}

As discussed in Section \ref{sec_strategy}, the first step in the construction of complex geometrical optics solutions with controlled boundary behaviour is a Carleman estimate with boundary terms. Here we give the proof of such an estimate for the conjugated operator $e^{\phi/h}(-\Delta + q)e^{-\phi/h}$ in $\Omega$, where the limiting Carleman weight $\phi$ is a linear function and $h > 0$ is a small parameter (in the previous notation, $h = 1/\tau$). Following \cite{KSU}, it is useful to consider a slightly modified weight 
$$
\phi_{\eps} = \phi + h f_{\eps}
$$
where $f_{\eps}$ is a smooth real valued function in $\overline{\Omega}$ depending on a small parameter $\eps$ but independent of $h$. The convexity of $f_{\eps}$ will lead to improved lower bounds in terms of the $L^2(\Omega)$ norms of $u$ and $\nabla u$. On the other hand, the sign of $\partial_{\nu} \phi_{\eps}$ in the boundary term of the Carleman estimate will allow to control functions on different parts of the boundary. Of special interest is the set $\partial \Omega_{\rm tan}$ where $\partial_{\nu} \phi = 0$, so one has 
$$
\partial_{\nu} \phi_{\eps} = h \partial_{\nu} f_{\eps} \quad \text{on } \partial \Omega_{\rm tan}.
$$
We would like to have $\partial_{\nu} f_{\eps} < 0$ on $\partial \Omega_{\rm tan}$. It is not easy to find a global convex function $f_{\eps}$ satisfying the last condition for a general set $\partial \Omega_{\rm tan}$. However, splitting $f_{\eps}$ to a convex part whose normal derivative vanishes on $\partial \Omega_{\rm tan}$ and another part which ensures the correct sign on $\partial \Omega_{\rm tan}$ will give the required result. We will use the notation $D = -i\nabla$, $(u,v) = (u,v)_{L^2(\Omega)}$, $\norm{u} = \norm{u}_{L^2(\Omega)}$, and also $L^2$ inner products and norms on parts of $\partial \Omega$ are indicated by a subscript.

\begin{prop} \label{prop_carleman_first}
Let $\Omega \subset \mR^n$, $n \geq 2$, be a bounded open set with $C^{\infty}$ boundary, let $\varphi(x) = \alpha \cdot x$ where $\alpha \in \mR^n$ is a unit vector, and let $\kappa$ be a smooth real valued function in $\overline{\Omega}$ so that $\partial_{\nu} \kappa = -1$ on $\partial \Omega$. Let also $q \in L^{\infty}(\Omega)$. There are constants $\eps, C_0, h_0 > 0$ with $h_0 \leq \eps \leq 1$ such that for the weight 
$$
\varphi_{\eps} = \varphi + \frac{h}{\eps} \frac{\varphi^2}{2} + h \kappa
$$
where $0 < h \leq h_0$, one has 
\begin{multline*}
\frac{h^3}{C_0} (\abs{\partial_{\nu} \phi_{\eps}} \partial_{\nu} u,\partial_{\nu} u)_{\partial \Omega_{-}(\phi_{\eps})} + \frac{h^2}{C_0} (\norm{u}^2 + \norm{hDu}^2) \\
 \leq \norm{e^{\phi/h}((hD)^2 + h^2 q) (e^{-\phi/h} u)}^2 + h^3 ( \abs{\partial_{\nu} \phi_{\eps}} \partial_{\nu} u,\partial_{\nu} u)_{\partial \Omega_+(\phi_{\eps})}
\end{multline*}
for any $u \in C^{\infty}(\overline{\Omega})$ with $u|_{\partial \Omega} = 0$.
\end{prop}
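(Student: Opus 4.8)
\emph{Reductions.} The plan is to reduce to an interior Carleman estimate for a convexified weight and then to extract the boundary terms by hand, with their signs. After a rotation I may assume $\phi(x)=x_1$, so that $\nabla\phi=e_1$ is constant and $\phi$ is harmonic---this is exactly what makes a linear function a limiting Carleman weight. First I would remove the potential: since $\norm{h^2 q u}\le h^2\norm{q}_{L^\infty(\Omega)}\norm{u}$, the elementary bound $\norm{(P+h^2q)v}^2\ge\tfrac12\norm{Pv}^2-h^4\norm{q}_{L^\infty}^2\norm{v}^2$ with $P=(hD)^2$ shows that the term $h^2q$ is absorbed into the interior term $\tfrac{h^2}{C_0}\norm{u}^2$ once $h_0$ is small, so it suffices to treat $q=0$. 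Next I would pass from $\phi$ to the weight $\phi_\eps$ occurring in the boundary sets. Writing $\psi=\tfrac{\phi^2}{2\eps}+\kappa$, so that $\phi_\eps=\phi+h\psi$, and setting $w=e^{\psi}u$, the multiplication operators $e^{\pm\psi}$ commute with $e^{\pm\phi_\eps/h}$ and one checks the operator identity $e^{\phi/h}(hD)^2 e^{-\phi/h}=e^{-\psi}\,e^{\phi_\eps/h}(hD)^2 e^{-\phi_\eps/h}\,e^{\psi}$. Since $e^{\pm\psi}$ is bounded above and below on $\overline\Omega$ for fixed $\eps$, and since $w|_{\partial\Omega}=0$ with $\partial_\nu w=e^{\psi}\partial_\nu u$ on $\partial\Omega$, the interior and boundary norms for $u$ and $w$ are comparable up to factors absorbed into $C_0$. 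Everything thus reduces to an estimate for $L_\eps:=e^{\phi_\eps/h}(hD)^2 e^{-\phi_\eps/h}$ applied to $w$.

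\emph{The basic identity.} Writing $\beta=\nabla\phi_\eps$, the conjugation gives $L_\eps=(hD+i\beta)^2=A+iB$, where
$$A=(hD)^2-\abs{\beta}^2,\qquad iB=2h\,\beta\cdot\nabla+h\,(\nabla\cdot\beta),$$
with $A=A^*$ and $(iB)^*=-iB$ modulo boundary terms. Integrating by parts---and now retaining every boundary contribution, since $w$ is not compactly supported---yields the workhorse identity
$$\norm{L_\eps w}^2=\norm{Aw}^2+\norm{Bw}^2+(\,i[A,B]w,w\,)+\mathcal{B}(w),$$
where $\mathcal{B}(w)$ collects the boundary terms. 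The task splits into showing that the three interior terms dominate $\tfrac{h^2}{C_0}(\norm{w}^2+\norm{hDw}^2)$, and that $\mathcal{B}(w)$ reproduces the stated boundary terms with the correct signs.

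\emph{Interior positivity from convexity.} For the plain linear weight the coefficients are constant and $[A,B]=0$; this borderline vanishing is the ``limiting'' nature of $\phi$, and it forces all positivity to come from the convexifying term $\tfrac{h}{\eps}\tfrac{\phi^2}{2}$. For $\phi_\eps$ the Hessian is $\nabla^2\phi_\eps=\tfrac{h}{\eps}\,e_1\otimes e_1+h\,\nabla^2\kappa$, and computing the commutator shows that it has a nondegenerate zeroth-order part, of size $(\,i[A,B]w,w\,)\gtrsim\tfrac{h^2}{\eps}\norm{w}^2$ up to boundary terms, once $\eps$ is small enough that $\tfrac{h}{\eps}e_1\otimes e_1$ dominates $h\,\nabla^2\kappa$. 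To promote this to control of $\norm{hDw}^2$ I would combine it with $\norm{Aw}^2$: off the set where the semiclassical symbol $\abs{h\xi}^2-\abs{\beta}^2$ of $A$ vanishes, $A$ is elliptic and $\norm{Aw}^2$ controls $\norm{hDw}^2$ with an $O(1)$ constant, whereas on that set $\abs{h\xi}\approx\abs{\beta}\approx 1$ is bounded, so that $\norm{hDw}^2\approx\norm{w}^2$ is already controlled by the commutator. A frequency decomposition then gives $\norm{Aw}^2+(\,i[A,B]w,w\,)\gtrsim\tfrac{h^2}{\eps}(\norm{w}^2+\norm{hDw}^2)$ up to boundary terms; since $\eps\le 1$ is fixed, this is $\ge\tfrac{h^2}{C_0}(\norm{w}^2+\norm{hDw}^2)$ after enlarging $C_0$.

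\emph{Boundary terms and conclusion.} The dominant term in $\mathcal{B}(w)$ comes from the integration by parts that converts the cross term into the commutator: pairing the Laplacian in $A$ with the first-order part $iB$ and using $w|_{\partial\Omega}=0$ (so $\nabla w=(\partial_\nu w)\nu$ and all tangential derivatives vanish on $\partial\Omega$, whence $(iB)w|_{\partial\Omega}=2h(\partial_\nu\phi_\eps)\partial_\nu w$), one finds that $\mathcal{B}(w)$ equals a positive multiple of $-h^3\int_{\partial\Omega}(\partial_\nu\phi_\eps)\abs{\partial_\nu w}^2\,dS$, modulo lower-order boundary contributions. Splitting $\partial\Omega=\partial\Omega_+(\phi_\eps)\cup\partial\Omega_-(\phi_\eps)\cup\partial\Omega_{\rm tan}(\phi_\eps)$, the integrand has sign $+$ on $\partial\Omega_+$, sign $-$ on $\partial\Omega_-$, and vanishes on $\partial\Omega_{\rm tan}$; the $\partial\Omega_-$ piece therefore enters $\norm{L_\eps w}^2$ with a favorable sign and may be moved to the left, while the $\partial\Omega_+$ piece must be kept on the right. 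Translating back from $w$ to $u$ then produces the stated terms $\tfrac{h^3}{C_0}(\abs{\partial_\nu\phi_\eps}\partial_\nu u,\partial_\nu u)_{\partial\Omega_-}$ on the left and $h^3(\abs{\partial_\nu\phi_\eps}\partial_\nu u,\partial_\nu u)_{\partial\Omega_+}$ on the right, with the fixed factors $e^{\pm\psi}$ arranged into $C_0$ and into the normalization of the right-hand side. Here the hypothesis $\partial_\nu\kappa=-1$ is used: on $\partial\Omega_{\rm tan}(\phi)$ one has $\partial_\nu\phi_\eps=h\,\partial_\nu\kappa=-h<0$, so the tangential set is pushed into $\partial\Omega_-(\phi_\eps)$ and ends up on the controlled side, with the weak coefficient $\abs{\partial_\nu\phi_\eps}=h$ there. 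I expect the main difficulty to be precisely this boundary bookkeeping: unlike in interior Carleman estimates none of the boundary terms may be discarded, and the remaining lower-order boundary contributions (from the zeroth-order part of $A$, from $iB$, and from commuting $A$ past $iB$) must each be shown either to be of the stated form or absorbable into the $h^3$ boundary terms and the interior bound; keeping track of the exact numerical constants multiplying these terms is the most delicate point.
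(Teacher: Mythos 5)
Your proposal follows essentially the same route as the paper: reduce to $\phi = x_1$, conjugate by the convexified weight $\phi_{\eps} = \phi + h(\phi^2/2\eps + \kappa)$ (you perform the substitution $u \mapsto e^{\psi}u$ at the start, the paper at the very end --- the two are equivalent and both explain why the $\partial\Omega_+$ coefficient can be normalized to exactly $h^3$), decompose the conjugated operator as $A+iB$, extract positivity of order $h^2/\eps$ from the commutator generated by the Hessian $\frac{h}{\eps}\,e_1\otimes e_1 + h\kappa''$, and read off the single boundary term $-2h^3((\partial_{\nu}\phi_{\eps})\partial_{\nu}u,\partial_{\nu}u)_{\partial\Omega}$ with the sign split over $\partial\Omega_{\pm}(\phi_{\eps})$ and the role of $\partial_{\nu}\kappa = -1$ on $\partial\Omega_{\rm tan}(\phi)$ exactly as in the paper. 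The one step where you diverge is the control of $\norm{hDu}^2$: your frequency decomposition near the characteristic set of $A$ would need a semiclassical partition of unity and a G\aa rding-type argument to be made precise, whereas the paper obtains the same bound elementarily from the identity $h^2\norm{hDu}^2 = h^2(Au,u) + h^2(\abs{\nabla\phi_{\eps}}^2u,u)$ followed by Cauchy--Schwarz with a large parameter $M \sim \eps^{-1}$; you may want to substitute that one-line argument for your sketch.
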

\begin{proof}
It is sufficient to consider the case $\alpha = e_1$, so $\phi(x) = x_1$ and
$$
\phi_{\eps} = x_1 + \frac{h}{\eps} \frac{x_1^2}{2} + h \kappa.
$$
Let $P_{\phi_{\eps}} = e^{\phi_{\eps}/h} (hD)^2 e^{-\phi_{\eps}/h}$, and write 
$$
P_{\phi_{\eps}} = A + iB
$$
where $A$ and $B$ are the formally self-adjoint operators 
\begin{gather*}
A = (hD)^2 - \abs{\nabla \phi_{\eps}}^2, \\
B = \nabla \phi_{\eps} \circ hD + hD \circ \nabla \phi_{\eps} = 2\nabla \phi_{\eps} \cdot hD + \frac{h}{i} \Delta \phi_{\eps}.
\end{gather*}
Then, using the condition $u|_{\partial \Omega} = 0$,  
\begin{align*}
\norm{P_{\phi_{\eps}} u}^2 &= ((A+iB)u, (A+iB)u) \\
 &= \norm{Au}^2 + \norm{Bu}^2 + i(Bu,Au) - i(Au,Bu) \\
 &= \norm{Au}^2 + \norm{Bu}^2 + (i[A,B]u,u) - i h^2 (Bu,\partial_{\nu} u)_{\partial \Omega} \\
 &= \norm{Au}^2 + \norm{Bu}^2 + (i[A,B]u,u) - 2 h^3 ((\partial_{\nu} \phi_{\eps}) \partial_{\nu} u,\partial_{\nu} u)_{\partial \Omega}.
\end{align*}
We compute the commutator 
\begin{align*}
 &i[A,B]u = h \big[ ((hD)^2 - \abs{\nabla \phi_{\eps}}^2)(2\nabla \phi_{\eps} \cdot \nabla u + (\Delta \phi_{\eps})u) \\
 &\quad \quad \quad - (2\nabla \phi_{\eps} \cdot \nabla + \Delta \phi_{\eps})((hD)^2 u - \abs{\nabla \phi_{\eps}}^2 u) \big] \\
 &= h \big[ 2 \nabla (hD)^2 \phi_{\eps} \cdot \nabla u + 4 hD \sum_{k=1}^n \partial_k \phi_{\eps} \cdot hD \partial_k u + ((hD)^2 \Delta \phi_{\eps}) u \\
  &\quad \quad \quad + 2 hD \Delta \phi_{\eps} \cdot hDu + 2 \nabla \phi_{\eps} \cdot \nabla(\abs{\nabla \phi_{\eps}}^2) u \big] \\
  &= h \big[ 4 (\phi_{\eps}'' \nabla \phi_{\eps} \cdot \nabla \phi_{\eps}) u - 4 h^2 \sum_{j,k=1}^n \partial_{jk} \phi_{\eps} \partial_{jk} u - 4 h^2 \nabla \Delta \phi_{\eps} \cdot \nabla u \\
  &\quad \quad \quad - h^2 (\Delta^2 \phi_{\eps}) u \big].
\end{align*}
Here $\phi_{\eps}''$ is the Hessian matrix of $\phi_{\eps}$. Integrating by parts once, using again that $u|_{\partial \Omega} = 0$, yields 
\begin{multline*}
(i[A,B]u, u) = 4h^3(\phi_{\eps}''\nabla u, \nabla u) + 4h((\phi_{\eps}'' \nabla \phi_{\eps} \cdot \nabla \phi_{\eps}) u, u) \\
 - h^3 ((\Delta^2 \phi_{\eps})u, u).
\end{multline*}
Now we use that $\varphi$ is a linear function, so that $\phi_{\eps}'' = \frac{h}{\eps} e_1 \otimes e_1 + h \kappa''$. Combining this with the previous computation for $\norm{P_{\phi_{\eps}} u}^2$, we obtain 
\begin{multline*}
\norm{P_{\phi_{\eps}} u}^2 = \norm{Au}^2 + \norm{Bu}^2 + \frac{4 h^4}{\eps} \norm{\partial_1 u}^2 + 4 h^4 (\kappa'' \nabla u, \nabla u) \\
 + \frac{4 h^2}{\eps} \norm{(\partial_1 \phi_{\eps}) u}^2  + 4 h^2 ((\kappa'' \nabla \phi_{\eps} \cdot \nabla \phi_{\eps}) u, u) - h^4 ((\Delta^2 \kappa)u, u) \\
 - 2 h^3 ((\partial_{\nu} \phi_{\eps}) \partial_{\nu} u,\partial_{\nu} u)_{\partial \Omega}.
\end{multline*}

We have $\nabla \phi_{\eps} = \nabla \phi + h(\frac{\phi \nabla \phi}{\eps} + \nabla \kappa)$. Assume that $h_0$ is so small that 
$$
\left\lvert h \left( \frac{\phi \nabla \phi}{\eps} + \nabla \kappa \right) \right\rvert \leq 1/2 \text{ on $\overline{\Omega}$ when $0 < h < h_0$.}
$$
Then $1/2 \leq \abs{\nabla \phi_{\eps}} \leq 3/2$ on $\overline{\Omega}$, and for some constant $C_{\kappa} \geq 1$ 
\begin{multline*}
\norm{P_{\phi_{\eps}} u}^2 \geq \norm{Au}^2 + \norm{Bu}^2 + \frac{4 h^2}{\eps} \norm{h D_1 u}^2 - C_{\kappa} h^2 \norm{hDu}^2 \\
 + \frac{h^2}{\eps} \norm{u}^2  - C_{\kappa} h^2 \norm{u}^2 - C_{\kappa} h^4 \norm{u}^2 - 2 h^3 ((\partial_{\nu} \phi_{\eps}) \partial_{\nu} u,\partial_{\nu} u)_{\partial \Omega}.
\end{multline*}
We obtain a further lower bound for $\norm{hDu}^2$ from the term $\norm{Au}^2$: given any $M > 0$, and using that $u|_{\partial \Omega} = 0$, one has 
\begin{align*}
h^2 \norm{hDu}^2 &= h^2 ((hD)^2 u, u) = h^2 (Au, u) + h^2 (\abs{\nabla \phi_{\eps}}^2 u, u) \\
 &\leq \frac{1}{2M} \norm{Au}^2 + \frac{M h^4}{2} \norm{u}^2 + 4h^2 \norm{u}^2.
\end{align*}
From this lower bound for $\norm{Au}^2$ and the trivial estimate $\norm{Bu}^2 \geq 0$, it follows that 
\begin{multline*}
\norm{P_{\phi_{\eps}} u}^2 \geq 2M h^2 \norm{hDu}^2 - M^2 h^4 \norm{u}^2 - 8M h^2 \norm{u}^2 + \frac{4 h^2}{\eps} \norm{h D_1 u}^2 \\
 - C_{\kappa} h^2 \norm{hDu}^2 + \frac{h^2}{\eps} \norm{u}^2  - C_{\kappa} h^2 \norm{u}^2 - C_{\kappa} h^4 \norm{u}^2 \\
 - 2 h^3 ((\partial_{\nu} \phi_{\eps}) \partial_{\nu} u,\partial_{\nu} u)_{\partial \Omega}.
\end{multline*}
Choosing $M = (16\eps)^{-1}$, choosing $\eps$ sufficiently small depending on $C_{\kappa}$ and then choosing $h_0$ sufficiently small depending on $\eps$ implies that 
$$
\norm{P_{\phi_{\eps}} u}^2 \geq \frac{h^2}{16 \eps} (\norm{u}^2 + \norm{hDu}^2) - 2 h^3 ((\partial_{\nu} \phi_{\eps}) \partial_{\nu} u,\partial_{\nu} u)_{\partial \Omega}
$$
when $0 < h < h_0$. By further decreasing $h_0$ we may add a potential $q \in L^{\infty}(\Omega)$ to get the estimate 
$$
2 \norm{(P_{\phi_{\eps}} + h^2 q)u}^2 \geq \frac{h^2}{100 \eps} (\norm{u}^2 + \norm{hDu}^2) - 2 h^3 ((\partial_{\nu} \phi_{\eps}) \partial_{\nu} u,\partial_{\nu} u)_{\partial \Omega}
$$
Finally, we replace $u$ by $e^{x_1^2/2\eps + \kappa} u$, where $u \in C^{\infty}(\overline{\Omega})$ and $u|_{\partial \Omega} = 0$, and use the fact that 
$$
1/C \leq e^{x_1^2/2\eps + \kappa} \leq C \ \ \text{on } \overline{\Omega}.
$$
The required estimate follows.
\end{proof}

We now pass from $\phi_{\eps}$ to $\phi$ in the boundary terms of the previous result, making use of the special properties of $\phi_{\eps}$ on $\partial \Omega$. Note that the factor $h^4$ in the boundary term on $\{x \in \partial \Omega \,;\, -\delta < \partial_{\nu} \phi(x) < h/3 \}$ below is weaker than the factor $h^3$ in the other boundary terms. This follows from the fact that $\partial_{\nu} \phi_{\eps} = h \partial_{\nu} \kappa = -h$ in the set where $\partial_{\nu} \phi$ vanishes, so one only has weak control near $\partial \Omega_{\rm tan}(\phi)$.

\begin{prop} \label{prop_carleman_second}
Let $q \in L^{\infty}(\Omega)$, and let $\phi(x) = \pm x_1$. There exist constants $C_0, h_0 > 0$ such that whenever $0 < h \leq h_0$ and $\delta > 0$, one has 
\begin{multline*}
\frac{\delta h^3}{C_0} \norm{\partial_{\nu} u}_{L^2(\{ \partial_{\nu} \phi \leq -\delta \})}^2 + \frac{h^4}{C_0} \norm{\partial_{\nu} u}_{L^2(\{ -\delta < \partial_{\nu} \phi < h/3 \})}^2+ \frac{h^2}{C_0} (\norm{u}^2 + \norm{h \nabla u}^2) \\
 \leq \norm{e^{\phi/h}(-h^2 \Delta + h^2 q) (e^{-\phi/h} u)}^2 + h^3 \norm{\partial_{\nu} u}_{L^2(\{ \partial_{\nu} \phi \geq h/3 \})}^2
\end{multline*}
for any $u \in C^{\infty}(\overline{\Omega})$ with $u|_{\partial \Omega} = 0$.
\end{prop}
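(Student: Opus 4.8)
The plan is to read off this estimate from Proposition~\ref{prop_carleman_first}. The only difference between the two statements is that in Proposition~\ref{prop_carleman_first} the boundary terms are weighted by $\abs{\partial_\nu\phi_\eps}$ and split according to the sign of $\partial_\nu\phi_\eps$, whereas here they are unweighted and organized by the size of $\partial_\nu\phi$. The whole argument therefore reduces to comparing $\partial_\nu\phi_\eps$ with $\partial_\nu\phi$ on $\partial\Omega$. Since $\nabla\phi_\eps=(1+\tfrac h\eps\phi)\nabla\phi+h\nabla\kappa$ and $\partial_\nu\kappa=-1$ on $\partial\Omega$, taking normal components gives the identity
\[
\partial_\nu\phi_\eps=\Bigl(1+\tfrac h\eps\phi\Bigr)\partial_\nu\phi-h \qquad\text{on }\partial\Omega,
\]
and the term $-h$ inherited from $h\kappa$ is precisely what will force the weaker power $h^4$ near $\partial\Omega_{\rm tan}(\phi)$.

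Before exploiting the identity I would normalize. The operator $e^{\phi/h}(-h^2\Delta+h^2q)e^{-\phi/h}$ and the sets $\{\partial_\nu\phi\le-\delta\}$, $\{-\delta<\partial_\nu\phi<h/3\}$, $\{\partial_\nu\phi\ge h/3\}$ depend on $\phi$ only through $\nabla\phi$, hence are unaffected by adding a constant to $\phi$. I may thus apply Proposition~\ref{prop_carleman_first} to the weight $x_1-c$ with $c$ so large that $\phi<0$ on $\overline\Omega$, and then shrink $h_0$ (depending only on the $\eps$ furnished by Proposition~\ref{prop_carleman_first} and on $\sup_{\overline\Omega}\abs\phi$) so that $\tfrac12\le 1+\tfrac h\eps\phi<1$ on $\overline\Omega$ for $0<h\le h_0$. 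Granting this and $\abs{\nabla\phi}=1$, the identity yields the elementary pointwise facts I need on $\partial\Omega$. On $\partial\Omega_+(\phi_\eps)=\{\partial_\nu\phi_\eps>0\}$ one has $\partial_\nu\phi>h$, so $\partial\Omega_+(\phi_\eps)\subset\{\partial_\nu\phi\ge h/3\}$, and there $0<\abs{\partial_\nu\phi_\eps}<\partial_\nu\phi\le1$. Conversely $\{\partial_\nu\phi<h/3\}\subset\partial\Omega_-(\phi_\eps)$, with the lower bounds $\abs{\partial_\nu\phi_\eps}\ge\tfrac12\delta$ on $\{\partial_\nu\phi\le-\delta\}$ and $\abs{\partial_\nu\phi_\eps}\ge\tfrac12 h$ on $\{-\delta<\partial_\nu\phi<h/3\}$.

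The conclusion is then immediate. Restricting the boundary integral over $\partial\Omega_-(\phi_\eps)$ on the left of Proposition~\ref{prop_carleman_first} to the two disjoint subsets above and inserting the corresponding lower bounds for $\abs{\partial_\nu\phi_\eps}$ produces the two left-hand terms $\tfrac{\delta h^3}{C_0}\norm{\partial_\nu u}_{L^2(\{\partial_\nu\phi\le-\delta\})}^2$ and $\tfrac{h^4}{C_0}\norm{\partial_\nu u}_{L^2(\{-\delta<\partial_\nu\phi<h/3\})}^2$, while the bulk term $\tfrac{h^2}{C_0}(\norm u^2+\norm{h\nabla u}^2)$ carries over verbatim since $\norm{hDu}=\norm{h\nabla u}$. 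On the right, the boundary integral over $\partial\Omega_+(\phi_\eps)$ is dominated, using $\abs{\partial_\nu\phi_\eps}\le1$ and $\partial\Omega_+(\phi_\eps)\subset\{\partial_\nu\phi\ge h/3\}$, by $h^3\norm{\partial_\nu u}_{L^2(\{\partial_\nu\phi\ge h/3\})}^2$; together with the operator term $\norm{e^{\phi/h}(-h^2\Delta+h^2q)(e^{-\phi/h}u)}^2$, which is common to both propositions, this is exactly the right-hand side sought. Renaming $C_0$ to absorb the factors $\tfrac12$ finishes the case $\phi=x_1$, and $\phi=-x_1$ follows from the reflection $x_1\mapsto-x_1$. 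I expect the only genuinely delicate points to be the normalization that secures $\abs{\partial_\nu\phi_\eps}\le1$ on the measured face $\partial\Omega_+(\phi_\eps)$ (so that the constant in front of the right-hand boundary term remains exactly $h^3$), and the transition region near $\partial\Omega_{\rm tan}(\phi)$, where $\abs{\partial_\nu\phi_\eps}\approx h$ is responsible for the loss of one power of $h$.
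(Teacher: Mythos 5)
Your argument is correct and is essentially the paper's own proof: both deduce the statement from Proposition \ref{prop_carleman_first} via the identity $\partial_{\nu}\phi_{\eps}=(1+\tfrac{h}{\eps}\phi)\partial_{\nu}\phi-h$, the inclusions $\{\partial_{\nu}\phi<h/3\}\subset\partial\Omega_-(\phi_{\eps})$ and $\partial\Omega_+(\phi_{\eps})\subset\{\partial_{\nu}\phi\geq h/3\}$, and the lower bounds $\abs{\partial_{\nu}\phi_{\eps}}\gtrsim\delta$ resp.\ $\gtrsim h$ on the two pieces of the left-hand boundary term. The only cosmetic difference is your translation of $\phi$ to force $\abs{\partial_{\nu}\phi_{\eps}}\leq 1$ on $\partial\Omega_+(\phi_{\eps})$ (which strictly requires noting that Proposition \ref{prop_carleman_first} is translation-covariant, since it is stated only for $\varphi=\alpha\cdot x$); the paper instead uses $\abs{\partial_{\nu}\phi_{\eps}}\leq C_0$ and absorbs constants by enlarging $C_0$.
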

\begin{proof}
Note that 
$$
\partial_{\nu} \phi_{\eps} = \left( 1 + \frac{h}{\eps} \phi \right) \partial_{\nu} \phi + h \partial_{\nu} \kappa = \left( 1 + \frac{h}{\eps} \phi \right) \partial_{\nu} \phi - h.
$$
We choose $h_0$ so small that whenever $h \leq h_0$, one has for $x \in \overline{\Omega}$ 
$$
\frac{1}{2} \leq 1 + \frac{h}{\eps} \phi(x) \leq \frac{3}{2}.
$$
On the set where $\partial_{\nu} \phi(x) \leq -\delta$, we have  
$$
\abs{\partial_{\nu} \phi_{\eps}} \geq \delta/2.
$$
If $-\delta < \partial_{\nu} \phi < h/3$, we use the estimate 
$$
\abs{\partial_{\nu} \phi_{\eps}} \geq h/2.
$$
Moreover, $\abs{\partial_{\nu} \phi_{\eps}} \leq C_0$ on $\partial \Omega$. Since $\{ \partial_{\nu} \phi < h/3 \} \subset \{ \partial_{\nu} \phi_{\eps} < 0 \}$ and $ \{ \partial_{\nu} \phi_{\eps} \geq 0 \} \subset \{ \partial_{\nu} \phi \geq h/3 \}$, the result follows from Proposition \ref{prop_carleman_first} after replacing $C_0$ by some larger constant.
\end{proof}

We can now obtain a solvability result from the previous Carleman estimate in a standard way by duality (see \cite{BU, KSU, NS}). There is a slight technical complication since the solution will be in $L^2$ but not in $H^1$. To remedy this we will work with the space 
$$
H_{\Delta}(\Omega) = \{ u \in L^2(\Omega) \,;\, \Delta u \in L^2(\Omega) \}
$$
with norm $\norm{u}_{H_{\Delta}} = \norm{u}_{L^2} + \norm{\Delta u}_{L^2}$. It is known (see \cite{BU}) that $H_{\Delta}(\Omega)$ is a Hilbert space having $C^{\infty}(\overline{\Omega})$ as a dense subset, and there is a well defined bounded trace operator from $H_{\Delta}(\Omega)$ to $H^{-1/2}(\partial \Omega)$ and a normal derivative operator from $H_{\Delta}(\Omega)$ to $H^{-3/2}(\partial \Omega)$. We also recall that if $u \in H_{\Delta}(\Omega)$ and $u|_{\partial \Omega} \in H^{3/2}(\partial \Omega)$, then $u \in H^2(\Omega)$.

\begin{prop} \label{prop_carleman_solvability}
Let $\Omega \subset \mR^n$ be a bounded open set with $C^{\infty}$ boundary, let $q \in L^{\infty}(\Omega)$, and let $\phi(x) = \pm x_1$. There exist constants $C_0, \tau_0 > 0$ such that when $\tau \geq \tau_0$ and $\delta > 0$, then for any $f \in L^2(\Omega)$ and $f_- \in L^2(S_- \cup S_0)$ there exists $u \in L^2(\Omega)$ satisfying $e^{\tau \phi} u \in H_{\Delta}(\Omega)$ and $e^{\tau \phi} u|_{\partial \Omega} \in L^2(\partial \Omega)$ such that  
$$
e^{-\tau \phi}(-\Delta + q)(e^{\tau \phi} u) = f \ \ \text{in $\Omega$}, \quad e^{\tau \phi} u|_{S_- \cup S_0} = e^{\tau \phi} f_-,
$$
and 
$$
\norm{u}_{L^2(M)} \leq C_0 (\tau^{-1} \norm{f}_{L^2(\Omega)} + (\delta \tau)^{-1/2} \norm{f_-|_{S_-}}_{L^2(S_-)} + \norm{f_-|_{S_0}}_{L^2(S_0)}).
$$
Here $S_{\pm}$ and $S_0$ are the following subsets of $\partial \Omega$:
$$
S_- = \{ \partial_{\nu} \phi \leq -\delta \}, S_0 = \{ -\delta < \partial_{\nu} \phi < 1/(3\tau) \}, S_+ = \{ \partial_{\nu} \phi \geq 1/(3\tau) \}.
$$
\end{prop}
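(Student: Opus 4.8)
The plan is to deduce solvability for the operator $L_\phi := e^{-\tau\phi}(-\Delta+q)e^{\tau\phi}$ from the Carleman estimate of Proposition~\ref{prop_carleman_second} by a duality (Hahn--Banach) argument. The point is that, with $h = 1/\tau$, that estimate controls precisely the formal transpose $L_\phi^t = e^{\tau\phi}(-\Delta+q)e^{-\tau\phi}$, which is exactly the conjugated operator $e^{\phi/h}(-\Delta+q)e^{-\phi/h}$ appearing there. The first move is to rewrite the right-hand side of Proposition~\ref{prop_carleman_second} as a Hilbert space norm. I would set $H = L^2(\Omega)\times L^2(S_+)$ and define, on the dense test class $\mathcal D = \{v\in C^\infty(\overline{\Omega}) : v|_{\partial\Omega}=0\}$, the map $Tv = (h\,L_\phi^t v,\ h^{1/2}\partial_\nu v|_{S_+})$, so that $\norm{Tv}_H^2 = h^2\norm{L_\phi^t v}^2 + h\norm{\partial_\nu v}_{L^2(S_+)}^2$. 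Dividing Proposition~\ref{prop_carleman_second} by $h^2$ turns it into the single inequality
\[
\norm{v}^2 + \delta h\norm{\partial_\nu v}_{L^2(S_-)}^2 + h^2\norm{\partial_\nu v}_{L^2(S_0)}^2 \le C_0\norm{Tv}_H^2,
\]
which in particular shows that $T$ is injective and bounded below on $\mathcal D$.

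Next I would encode the data $(f,f_-)$ in a single linear functional on the range of $T$. Set
\[
\ell(v) = \int_\Omega f v\,dx - \int_{S_-\cup S_0} f_-\,\partial_\nu v\,dS, \qquad v\in\mathcal D,
\]
and define $\Lambda(Tv) := \ell(v)$ on $\mathrm{Range}(T)\subset H$; this is well defined because $T$ is injective. The displayed inequality, together with Cauchy--Schwarz applied termwise to $\ell$, gives $\abs{\ell(v)} \le C_0^{1/2}\big(\norm{f} + (\delta h)^{-1/2}\norm{f_-|_{S_-}} + h^{-1}\norm{f_-|_{S_0}}\big)\norm{Tv}_H$, so $\Lambda$ is bounded with norm controlled by the bracketed quantity. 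Extending $\Lambda$ to $H$ and invoking the Riesz representation theorem produces $(g,b)\in L^2(\Omega)\times L^2(S_+)$ with $\norm{(g,b)}_H = \norm{\Lambda}$ and $\int_\Omega (h L_\phi^t v)\,g + h^{1/2}\int_{S_+}(\partial_\nu v)\,b = \ell(v)$ for every $v\in\mathcal D$.

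Setting $u := hg$ and using Green's formula $\int_\Omega u\,L_\phi^t v = \int_\Omega (L_\phi u)v - \int_{\partial\Omega} u\,\partial_\nu v$ (the weights $e^{\pm\tau\phi}$ cancel on $\partial\Omega$ since $v|_{\partial\Omega}=0$), the representation identity becomes
\[
\int_\Omega (L_\phi u - f)\,v + \int_{S_-\cup S_0}(f_- - u)\,\partial_\nu v + \int_{S_+}(h^{1/2}b - u)\,\partial_\nu v = 0
\]
for all $v\in\mathcal D$. Testing first against $v\in C_c^\infty(\Omega)$ yields $L_\phi u = f$ in $\Omega$, and then, since $\{\partial_\nu v|_{\partial\Omega} : v\in\mathcal D\}$ is dense in $L^2(\partial\Omega)$, the remaining boundary integrals force $u|_{S_-\cup S_0} = f_-$ and $u|_{S_+} = h^{1/2}b\in L^2(S_+)$. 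In particular $u\in L^2(\Omega)$; the relation $(-\Delta+q)(e^{\tau\phi}u) = e^{\tau\phi}f\in L^2$ gives $e^{\tau\phi}u\in H_{\Delta}(\Omega)$, and $u|_{\partial\Omega}\in L^2(\partial\Omega)$ gives $e^{\tau\phi}u|_{\partial\Omega}\in L^2(\partial\Omega)$. Finally $\norm{u} = h\norm{g}\le h\norm{\Lambda}$, and inserting the bound on $\norm{\Lambda}$ with $h=\tau^{-1}$ produces exactly the claimed estimate.

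The substantive step is the functional-analytic setup of the second paragraph: one must build the Hilbert space so that the \emph{uncontrolled} boundary piece on $\partial\Omega_+$ (the plus-sign term, carrying the weaker factor in Proposition~\ref{prop_carleman_second}) is absorbed into the norm $\norm{Tv}_H$ rather than needing to be estimated, while the \emph{controlled} pieces on $S_-$ and $S_0$ remain available to pair against the prescribed Dirichlet data $f_-$. Getting the scaling of $T$ right is what produces the precise powers $\tau^{-1}$, $(\delta\tau)^{-1/2}$ and $\tau^0$ in the three terms of the estimate; the remaining points --- density of Neumann traces, the cancellation of the exponential weights on $\partial\Omega$, and the $H_{\Delta}$ regularity --- are routine once this is in place.
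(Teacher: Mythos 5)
Your proposal is correct and is essentially the paper's own proof: the same Hahn--Banach/Riesz duality argument applied to the Carleman estimate of Proposition~\ref{prop_carleman_second}, with the functional $\ell(v)=\int_\Omega f v - \int_{S_-\cup S_0} f_-\,\partial_\nu v$ defined on the range of $(L_\phi^t v,\partial_\nu v|_{S_+})$, the uncontrolled $S_+$ trace absorbed into the Hilbert norm, and the boundary conditions read off from the density of $\{\partial_\nu v\}$ in $L^2(\partial\Omega)$. The only (cosmetic) differences are your rescaling of $T$ by powers of $h$ and the use of the bilinear transpose in place of the paper's adjoint with $\bar q$; just note that, as in the paper, one should first deduce $L_\phi u=f$ distributionally (hence $e^{\tau\phi}u\in H_\Delta(\Omega)$) and only then justify Green's formula by approximating $u$ in $H_\Delta(\Omega)$ by smooth functions.
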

\begin{proof}
Write $Lv = e^{\tau \phi}(-\Delta + \bar{q}) (e^{-\tau \phi} v)$ and $\tau = 1/h$, $\tau_0 = 1/h_0$. We rewrite the Carleman estimate of Proposition \ref{prop_carleman_second} as 
\begin{multline*}
(\delta \tau)^{1/2} \norm{\partial_{\nu} v}_{L^2(S_-)} + \norm{\partial_{\nu} v}_{L^2(S_0)} + \tau \norm{v} + \norm{\nabla v} \\
 \leq C_0 \norm{Lv} + C_0 \tau^{1/2} \norm{\partial_{\nu} v}_{L^2(S_+)}.
\end{multline*}
This is valid for any $\delta > 0$, provided that $\tau \geq \tau_0$ and $v \in C^{\infty}(\overline{\Omega})$ with $v|_{\partial \Omega} = 0$.

Consider the following subspace of $L^2(\Omega) \times L^2(S_+)$:
$$
X = \{ (Lv, \partial_{\nu} v|_{S_+}) \,;\, v \in C^{\infty}(\overline{\Omega}), \ v|_{\partial \Omega} = 0 \}.
$$
Any element of $X$ is uniquely represented as $(Lv, \partial_{\nu} v|_{S_+})$ where $v|_{\partial \Omega} = 0$ by the Carleman estimate. Define a linear functional $l: X \to \mC$ by 
$$
l(Lv, \partial_{\nu} v|_{S_+}) = ( v, f )_{L^2(\Omega)} - ( \partial_{\nu} v, f_- )_{L^2(S_- \cup S_0)}.
$$
By the Carleman estimate, we have 
\begin{align*}
\abs{l(Lv, \partial_{\nu} v|_{S_+})} &\leq \norm{v} \norm{f} + \norm{\partial_{\nu} v}_{L^2(S_-)} \norm{f_-}_{L^2(S_-)} \\
 &\qquad + \norm{\partial_{\nu} v}_{L^2(S_0)} \norm{f_-}_{L^2(S_0)} \\
 &\leq C_0 (\tau^{-1} \norm{f} + (\delta \tau)^{-1/2} \norm{f_-}_{L^2(S_-)} + \norm{f_-}_{L^2(S_0)}) \\
 &\qquad \times (\norm{Lv} + \tau^{1/2} \norm{\partial_{\nu} v}_{L^2(S_+)}).
\end{align*}
The Hahn-Banach theorem implies that $l$ extends to a continuous linear functional $\bar{l}: L^2(\Omega) \times \tau^{-1/2} L^2(S_+) \to \mC$ such that 
$$
\norm{\bar{l}} \leq C_0 (\tau^{-1} \norm{f} + (\delta \tau)^{-1/2} \norm{f_-}_{L^2(S_-)} + \norm{f_-}_{L^2(S_0)}).
$$
By the Riesz representation theorem, there exist functions $u \in L^2(\Omega)$ and $u_+ \in L^2(S_+)$ satisfying $\bar{l}(w, w_+) = (w, u)_{L^2(\Omega)} + (w_+, u_+)_{L^2(S_+)}$. Moreover, 
\begin{multline*}
\norm{u}_{L^2(\Omega)} + \tau^{-1/2} \norm{u_+}_{L^2(S_+)} \\
\leq C_0 (\tau^{-1} \norm{f} + (\delta \tau)^{-1/2} \norm{f_-}_{L^2(S_-)} + \norm{f_-}_{L^2(S_0)}).
\end{multline*}
If $v \in C^{\infty}(\overline{\Omega})$ and $v|_{\partial \Omega} = 0$, we have 
\begin{align}
(Lv, u)_{L^2(\Omega)} + (\partial_{\nu} v, u_+)_{L^2(S_+)} &= ( v, f )_{L^2(\Omega)} \label{lvu_solution} \\
 &\qquad - ( \partial_{\nu} v, f_- )_{L^2(S_- \cup S_0)}. \notag
\end{align}
Choosing $v$ compactly supported in $\Omega$, it follows that $L^* u = f$, or 
$$
e^{-\tau \phi}(-\Delta + q) (e^{\tau \phi} u) = f \quad \text{in } \Omega.
$$
Furthermore, $e^{\tau \phi} u \in H_{\Delta}(\Omega)$.

If $w, v \in C^{\infty}(\overline{\Omega})$ with $v|_{\partial \Omega} = 0$, an integration by parts gives 
$$
(Lv, w) = -(e^{-\tau \phi} \partial_{\nu} v, e^{\tau \phi} w)_{L^2(\partial \Omega)} + (v, L^* w).
$$
Given our solution $u$, we choose $u_j \in C^{\infty}(\overline{\Omega})$ so that $e^{\tau \phi} u_j \to e^{\tau \phi} u$ in $H_{\Delta}(\Omega)$. Applying the above formula with $w = u_j$ and taking the limit, we see that 
$$
(Lv, u) = -(e^{-\tau \phi} \partial_{\nu} v, e^{\tau \phi} u)_{L^2(\partial \Omega)} + (v, L^* u)
$$
for $v \in C^{\infty}(\overline{\Omega})$ with $v|_{\partial \Omega} = 0$. Combining this with \eqref{lvu_solution}, using that $L^* u = f$, gives 
$$
( \partial_{\nu} v, f_- )_{L^2(S_- \cup S_0)} + (\partial_{\nu} v, u_+)_{L^2(S_+)} = (e^{-\tau \phi} \partial_{\nu} v, e^{\tau \phi} u)_{L^2(\partial \Omega)}.
$$Since $\partial_{\nu} v$ can be chosen arbitrarily, it follows that 
$$
e^{\tau \phi} u|_{S_- \cup S_0} = e^{\tau \phi} f_-, \quad e^{\tau \phi} u|_{S_+} = e^{\tau \phi} u_+.
$$
We also see that $e^{\tau \phi} u|_{\partial \Omega} \in L^2(\partial \Omega)$.
\end{proof}

\section{Complex geometrical optics} \label{sec_cgo}

We will now describe a construction of CGO solutions that is relevant for the proof of Theorem \ref{thm_ks1}. Suppose that $\Omega \subset \mR \times \Omega_0$ where $\Omega_0$ is a bounded open set with $C^{\infty}$ boundary in $\mR^2$. Let $\phi(x) = x_1$, and let $E$ be an open subset of $\partial \Omega_0$ such that $\Gamma_i$ satisfies 
$$
\Gamma_i \subset \mR \times (\partial \Omega_0 \setminus E).
$$
Let also $q \in L^{\infty}(\Omega)$. We wish to construction a solution $u \in H_{\Delta}(\Omega)$ of the equation 
$$
(-\Delta+q)u= 0 \text{ in } \Omega
$$
where 
$$
u = e^{-s x_1}(m + r_0)
$$
and $s$ is a \emph{slightly complex large frequency}, 
$$
s = \tau + i\lambda
$$
where $\tau > 0$ (eventually $\tau \to \infty$) and $\lambda \in \mR$ is fixed. The use of a slightly complex frequency instead of a real frequency allows to introduce another real parameter $\lambda$ in the CGO solutions, which makes it possible to employ the Fourier transform in the $x_1$ variable.

Inserting the ansatz for $u$ in the equation, we need to solve 
$$
e^{sx_1}(-\Delta+q)(e^{-s x_1} r_0) = f \text{ in } \Omega
$$
where 
$$
f = -e^{sx_1}(-\Delta+q)(e^{-s x_1} m) = -(-\Delta + 2s \partial_1 - s^2 + q)m.
$$
It will be useful to look for an amplitude $m$ independent of $x_1$, so $m = m(x')$ where $x = (x_1,x')$ and $x' = (x_2,x_3) \in \mR^2$. Then $f$ has the simpler form 
$$
f = (-\Delta_{x'} - s^2 + q)m.
$$

We seek for an amplitude $m \in C^2(\overline{\Omega}_0)$ satisfying 
$$
\norm{(-\Delta_{x'}-s^2) m}_{L^2(\Omega_0)} = O(1), \quad \norm{m}_{L^2(\Omega_0)} = O(1),
$$
and the boundary values should satisfy 
$$
\norm{m}_{L^2(E)} = O(1), \quad \norm{m}_{L^2(\partial \Omega_0 \setminus E)} = o(1)
$$
as $\tau \to \infty$. These conditions have been chosen to be compatible with Proposition \ref{prop_carleman_solvability}, and they can be interpreted so that $m$ should be an \emph{approximate eigenfunction}, or \emph{quasimode}, of the Laplacian in $\Omega_0$ with frequency $s$. In fact, we will arrange so that 
$$
m|_{\partial \Omega_0 \setminus E} = 0.
$$
If we can find such an $m$â then Proposition \ref{prop_carleman_solvability} together with the fact that the inaccessible part $\Gamma_i$ satisfies $\Gamma_i \subset \mR \times (\partial \Omega_0 \setminus E)$ will allow to find a correction term $r$ with $\norm{r}_{L^2(\Omega)} = o(1)$ as $\tau \to \infty$.

Let us now describe one construction of such an $m$. We choose a straight line $\gamma$ in $\mR^2$ that intersects $\overline{\Omega}_0$ but does not meet $\partial \Omega_0 \setminus E$, and will construct an amplitude $m$ that concentrates on $\gamma$. We look for $m$ in the form 
$$
m(x') = e^{is \psi(x')} a(x').
$$
A computation gives that 
$$
(-\Delta_{x'}-s^2) m = e^{is \psi} \left[ s^2 (\abs{\nabla \psi}^2-1) a - is (2\nabla \psi \cdot \nabla a + (\Delta_{x'} \psi)a ) - \Delta_{x'} a\right].
$$
It is enough to choose $\psi$ and $a$ so that the following equations are valid in $\Omega_0$: 
$$
\abs{\nabla \psi}^2 = 1, \qquad 2\nabla \psi \cdot \nabla a + (\Delta_{x'} \psi)a = 0.
$$
The first equation is an eikonal equation, and distance functions are solutions. Choose a ball $\hat{\Omega}_0$ with $\Omega_0 \subset \subset \hat{\Omega}_0$, and choose some point $x_0' \in \hat{\Omega}_0 \setminus \overline{\Omega}_0$ that lies the line $\gamma$. Define 
$$
\psi(x') = \abs{x'-x_0'}, \quad x' \in \overline{\Omega}_0.
$$
Then $\psi \in C^{\infty}(\overline{\Omega}_0)$ satisfies $\abs{\nabla \psi} = 1$. The second equation above is a transport equation for $a$, and has the solution 
$$
a(r,\theta) = r^{-1/2} b(\theta)
$$
where $(r,\theta)$ are polar coordinates in $\mR^2$ with center at $x_0'$, and $b$ is any function in $C^{\infty}(S^1)$. (Note that $\psi(r,\theta) = r$ in these coordinates.) The point is that if the line $\gamma$ is given in the $(r,\theta)$ coordinates by $r \mapsto (r,\theta_0)$, then choosing $b$ supported very close to $\theta_0$ and independent of $\tau$ will result in the boundary conditions 
$$
\norm{m}_{L^2(E)} = O(1), \quad m|_{\partial \Omega_0 \setminus E} = 0
$$
as $\tau \to \infty$.

Combining the above amplitude construction with Proposition \ref{prop_carleman_solvability} results in the following existence result for CGO solutions. (See \cite[Section 6]{KSa} for the full details of the proof.)

\begin{prop} \label{prop_cgo_existence}
Suppose that $\Omega \subset \mR \times \Omega_0$ where $\Omega_0$ is a bounded open set with $C^{\infty}$ boundary in $\mR^2$. Let $\phi(x) = \pm x_1$ and decompose $\partial \Omega_{\rm tan}(\phi)$ as $\Gamma_a \cup \Gamma_i$ where the closed set $\Gamma_i$ satisfies, for some open subset $E$ of $\partial \Omega_0$, 
$$
\Gamma_i \subset \mR \times (\partial \Omega_0 \setminus E).
$$
Let also $q \in L^{\infty}(\Omega)$. Choose a ball $\hat{\Omega}_0$ with $\Omega_0 \subset \subset \hat{\Omega}_0$, let $x_0' \in \hat{\Omega}_0 \setminus \overline{\Omega}_0$, and let $(r,\theta)$ be polar coordinates in $\mR^2$ with center at $x_0'$. Let $\theta_0 \in S^1$ be such that the line $r \mapsto (r,\theta_0)$ meets $\overline{\Omega}_0$ but not $\partial \Omega_0 \setminus E$. There exists a solution $u \in H_{\Delta}(\Omega)$ of the equation 
$$
(-\Delta+q)u = 0 \quad \text{in } \Omega
$$
having the form 
$$
u = e^{-s\phi}(e^{isr} r^{-1/2} b(\theta) + r_0)
$$
where $b \in C^{\infty}(S^1)$ is supported very close to $\theta_0$, 
$$
\norm{r_0}_{L^2(\Omega)} = o(1)
$$
as $\tau \to \infty$, and 
$$
\supp(u|_{\partial \Omega}) \subset \Gamma
$$
if $\Gamma$ is any open set in $\partial \Omega$ with $\partial \Omega_{\mp}(\phi) \cup \Gamma_a \subset \Gamma$.
\end{prop}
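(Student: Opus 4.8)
The plan is to insert the ansatz $u = e^{-sx_1}(m+r_0)$ with $m = m(x')$ independent of $x_1$, so that, by the computation above, constructing $u$ splits into two tasks. First, construct an amplitude $m$ that is an approximate eigenfunction of $-\Delta_{x'}$ at frequency $s$, concentrated on the chosen ray, and vanishing on $\partial \Omega_0 \setminus E$; here the relevant source is $f = (-\Delta_{x'} - s^2 + q)m$, which is independent of $x_1$, so it is enough to control it over $\Omega_0$. Second, solve the conjugated equation $e^{sx_1}(-\Delta+q)(e^{-sx_1} r_0) = f$ for a correction with $\norm{r_0}_{L^2(\Omega)} = o(1)$, using Proposition \ref{prop_carleman_solvability}, and with boundary data chosen so that $\supp(u|_{\partial\Omega}) \subset \Gamma$.

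For the amplitude I would seek $m = e^{is\psi} a$ and reduce the leading part of $(-\Delta_{x'} - s^2)m$ to the eikonal equation $\abs{\nabla\psi}^2 = 1$ and the transport equation $2\nabla\psi\cdot\nabla a + (\Delta_{x'}\psi)a = 0$. Taking $x_0' \in \hat{\Omega}_0 \setminus \overline{\Omega}_0$ on the chosen ray, the distance function $\psi(x') = \abs{x'-x_0'}$ solves the eikonal equation and is smooth on $\overline{\Omega}_0$ since $x_0'$ lies outside; in polar coordinates $(r,\theta)$ centered at $x_0'$ the transport equation reduces to $2\partial_r a + r^{-1} a = 0$, with solutions $a = r^{-1/2} b(\theta)$ for arbitrary $b \in C^\infty(S^1)$. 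Because the central ray $\theta = \theta_0$ meets $\overline{\Omega}_0$ but avoids the compact set $\partial\Omega_0 \setminus E$, choosing $b$ supported in a small enough neighborhood of $\theta_0$ confines $m$ to a thin cone exiting $\overline{\Omega}_0$ only through $E$; this gives $m|_{\partial\Omega_0 \setminus E} = 0$. Since $\abs{e^{is\psi}} = e^{-\lambda r}$ and $r^{-1/2}$ are bounded on $\overline{\Omega}_0$ uniformly in $\tau$, one obtains $\norm{m}_{L^2(\Omega_0)} = O(1)$, $\norm{m}_{L^2(E)} = O(1)$, and $(-\Delta_{x'}-s^2)m = -e^{is\psi}\Delta_{x'} a$, so that $\norm{f}_{L^2(\Omega)} = O(1)$.

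For the correction I would first absorb the imaginary part of the frequency: writing $g = e^{-i\lambda x_1} r_0$ turns the conjugated equation into $e^{\tau x_1}(-\Delta+q)(e^{-\tau x_1} g) = e^{-i\lambda x_1} f$, which is exactly of the form solved by Proposition \ref{prop_carleman_solvability} with the linear weight $-\phi$ and a right-hand side of the same $L^2$ norm. The proposition produces $g$, hence $r_0 = e^{i\lambda x_1} g$ with $e^{-sx_1} r_0 \in H_\Delta(\Omega)$, prescribing the boundary values on $S_- \cup S_0 = \{\partial_\nu(-\phi) < 1/(3\tau)\}$ and leaving them free on $S_+$, which tends to $\partial\Omega_-(\phi) \subset \Gamma$. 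On the part of $\partial\Omega \setminus \Gamma$ lying in $S_- \cup S_0$ I would prescribe $r_0 = -m$, forcing $u = e^{-sx_1}(m+r_0)$ to vanish there, while on the neighborhood of $\Gamma_a \subset \Gamma$ I would prescribe $r_0 = 0$ and allow $u$ to be nonzero. The estimate then bounds $\norm{r_0}_{L^2(\Omega)}$ by $\tau^{-1}\norm{f} + (\delta\tau)^{-1/2}\norm{m}_{L^2(S_-)} + \norm{m}_{L^2(S_0 \cap (\partial\Omega \setminus \Gamma))}$.

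The main obstacle is the last term, which carries no gain in $\tau$, since $S_0$ is only a shrinking neighborhood of $\partial\Omega_{\rm tan}(\phi)$. I would split it using $\partial\Omega_{\rm tan} = \Gamma_a \cup \Gamma_i$. Near $\Gamma_i \subset \mR \times (\partial\Omega_0 \setminus E)$ the amplitude vanishes by construction, so by continuity of $m$ its contribution over the tangential band is made small once $\delta$ is fixed small; near $\Gamma_a$ there is no need to annihilate $u$, and for $\delta$ small the support stays inside the $\partial\Omega$-neighborhood of $\Gamma_a$ already contained in the open set $\Gamma$. The first two terms are $o(1)$ as $\tau \to \infty$, and the delicate point is to couple $\delta = \delta(\tau) \to 0$ with $\tau \to \infty$ so that simultaneously $(\delta\tau)^{-1/2} \to 0$ and the fixed-$\delta$ tangential contribution of $m$ tends to zero; a diagonal choice such as $\delta(\tau) = \tau^{-1/2}$ accomplishes both and yields $\norm{r_0}_{L^2(\Omega)} = o(1)$, completing the construction.
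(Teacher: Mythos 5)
Your proposal is correct and follows essentially the same route as the paper (which itself defers full details to \cite[Section 6]{KSa}): the WKB amplitude $e^{is\psi}r^{-1/2}b(\theta)$ built from the eikonal and transport equations with $\psi$ the distance to an exterior point, supported in a thin sector avoiding $\partial\Omega_0\setminus E$, combined with Proposition \ref{prop_carleman_solvability} to produce the correction term with prescribed boundary values on $S_-\cup S_0$. Your explicit reduction of the slightly complex frequency to the real-weight solvability result via $g=e^{-i\lambda x_1}r_0$, and your handling of the $S_0$ boundary term (noting that the non-$\Gamma$ part of the tangential band collapses onto $\Gamma_i$, where the amplitude vanishes identically, so in fact a fixed small $\delta$ already suffices and the diagonal choice $\delta(\tau)=\tau^{-1/2}$ is not strictly needed), are exactly the points the paper's sketch glosses over.
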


\section{Uniqueness results} \label{sec_uniqueness}

We now describe how to complete the outline given in Section \ref{sec_strategy} and prove Theorem \ref{thm_ks1} using an injectivity result for a certain transform (in the end of the section we discuss briefly the proofs of Theorems \ref{thm_ks2}--\ref{thm_ks4}). If $f$ is a piecewise continuous compactly supported function in $\mR^3$, the relevant transform of $f$ is given by 
$$
Tf(\lambda,\gamma) =  \int_{-\infty}^{\infty} e^{-2\lambda t} \left[ \int_{-\infty}^{\infty} e^{-2i\lambda x_1} f(x_1,\gamma(t))  \,dx_1 \right] \,dt,
$$
where $\lambda \in \mR$ and $\gamma$ is a line in $\mR^2$. That is, we are taking the Fourier transform of $f$ in the $x_1$ variable with frequency $2\lambda$ and then taking the attenuated X-ray transform, with constant attenuation $-2\lambda$, along lines in $\mR^2_{x'}$. We choose the parametrization $\gamma(t) = \sigma \omega_{\perp} + t \omega$, where $\omega \in S^1$ is the direction vector of the line $\gamma$, $\omega_{\perp}$ is the counterclockwise rotation of $\omega$ by $90$ degrees, and $\sigma$ is the signed distance to the origin. (The choice of parametrization will not be too important below.)

\begin{prop} \label{prop_ks1_transform}
Suppose that $\Omega \subset \mR \times \Omega_0$ where $\Omega_0$ is a bounded open set with $C^{\infty}$ boundary in $\mR^2$. Let $\phi(x) = x_1$, and let $E$ be an open subset of $\partial \Omega_0$ such that $\Gamma_i$ satisfies 
$$
\Gamma_i \subset \mR \times (\partial \Omega_0 \setminus E).
$$
If $q_1, q_2 \in C(\overline{\Omega})$, if $\partial \Omega_- \cup \Gamma_a \subset \Gamma_D$ and $\partial \Omega_+ \cup \Gamma_a \subset \Gamma_N$, and if 
$$
C_{q_1}^{\Gamma_D,\Gamma_N} = C_{q_2}^{\Gamma_D,\Gamma_N},
$$
then 
$$
T(q_1-q_2)(\lambda,\gamma) = 0
$$
for all $\lambda \in \mR$ and for any line $\gamma$ in $\mR^2$ that does not meet $\partial \Omega_0 \setminus E$. (Here, we extend $q_1-q_2$ by zero to $\mR^2$.)
\end{prop}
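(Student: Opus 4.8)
The plan is to follow the three-step strategy of Section~\ref{sec_strategy}. Since $C_{q_1}^{\Gamma_D,\Gamma_N} = C_{q_2}^{\Gamma_D,\Gamma_N}$, Lemma~\ref{prop_integralidentity} gives the orthogonality identity
$$
\int_\Omega (q_1-q_2) u_1 u_2 \,dx = 0
$$
for every pair of solutions $u_j \in H_\Delta(\Omega)$ of $(-\Delta+q_j)u_j=0$ with $\supp(u_1|_{\partial\Omega}) \subset \Gamma_D$ and $\supp(u_2|_{\partial\Omega}) \subset \Gamma_N$. The goal is to feed into this identity the CGO solutions provided by Proposition~\ref{prop_cgo_existence}, concentrated on a fixed line $\gamma$, and to let $\tau \to \infty$ so that the attenuated transform $T(q_1-q_2)(\lambda,\gamma)$ emerges in the limit.

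Fix $\lambda \in \mR$ and a line $\gamma \subset \mR^2$ not meeting $\partial\Omega_0 \setminus E$, and choose two centers $x_0', x_0'' \in \gamma \setminus \overline{\Omega}_0$ on opposite sides of $\overline{\Omega}_0$. First I would apply Proposition~\ref{prop_cgo_existence} with weight $\phi(x)=x_1$, frequency $s=\tau+i\lambda$, and center $x_0'$, producing
$$
u_1 = e^{-sx_1}(m_1 + \rho_1), \qquad m_1 = e^{is\psi_1} \psi_1^{-1/2} b_1(\theta),
$$
where $\psi_1(x') = \abs{x'-x_0'}$ and $\supp(u_1|_{\partial\Omega}) \subset \Gamma_D$ (using $\partial\Omega_- \cup \Gamma_a \subset \Gamma_D$). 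Then I would apply the same proposition with weight $\phi(x)=-x_1$, the conjugate frequency $\bar s = \tau - i\lambda$ (i.e.\ replacing $\lambda$ by $-\lambda$), and center $x_0''$, producing
$$
u_2 = e^{\bar s x_1}(m_2 + \rho_2), \qquad m_2 = e^{i\bar s \psi_2} \psi_2^{-1/2} b_2(\theta'),
$$
where $\psi_2(x') = \abs{x'-x_0''}$ and $\supp(u_2|_{\partial\Omega}) \subset \Gamma_N$ (using $\partial\Omega_+ \cup \Gamma_a \subset \Gamma_N$). The angular cutoffs $b_1, b_2$ are supported near the direction of $\gamma$ as seen from the respective centers; the hypothesis that $\gamma$ avoids $\partial\Omega_0 \setminus E$ is exactly what guarantees $m_j|_{\partial\Omega_0 \setminus E}=0$, and the underlying Carleman solvability estimate (Proposition~\ref{prop_carleman_solvability}) provides $\norm{\rho_j}_{L^2(\Omega)} = O(\tau^{-1})$ while $\norm{m_j}_{L^2(\Omega_0)}=O(1)$.

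The decisive computation is the $\tau\to\infty$ limit of the product. Since $e^{-sx_1} e^{\bar s x_1} = e^{-2i\lambda x_1}$, one has $u_1 u_2 = e^{-2i\lambda x_1}(m_1+\rho_1)(m_2+\rho_2)$, and writing $s\psi_1 + \bar s \psi_2 = \tau(\psi_1+\psi_2) + i\lambda(\psi_1-\psi_2)$ gives
$$
m_1 m_2 = e^{i\tau(\psi_1+\psi_2)}\, e^{-\lambda(\psi_1-\psi_2)}\, \psi_1^{-1/2}\psi_2^{-1/2}\, b_1 b_2.
$$
The phase $\psi_1+\psi_2$, the sum of distances to the two collinear centers, is stationary precisely on the segment of $\gamma$ between $x_0'$ and $x_0''$, where $\nabla\psi_1$ and $\nabla\psi_2$ are opposite unit vectors, and its Hessian is nondegenerate in the single transverse direction. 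Stationary phase in $x'$ thus concentrates the integral on $\gamma$; along $\gamma$ one has $\psi_1+\psi_2 \equiv \abs{x_0'-x_0''}$ and $\psi_1-\psi_2 = 2t + \mathrm{const}$, so $e^{-\lambda(\psi_1-\psi_2)}$ produces exactly the attenuation $e^{-2\lambda t}$. After removing the $\tau$-dependent unit constant $e^{i\tau\abs{x_0'-x_0''}}$ (multiply $u_2$ by its conjugate, changing neither the equation nor the support), the stationary phase lemma yields, for any $g \in C_c(\mR^2)$,
$$
\int_{\Omega_0} g\, m_1 m_2 \,dx' = c\,\tau^{-1/2} \int_{\gamma} g(\gamma(t))\, e^{-2\lambda t}\,dt + o(\tau^{-1/2}), \qquad c \neq 0.
$$

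To finish, I would insert the solutions into the orthogonality identity, use Fubini with $g(x') = \int_{\mR} e^{-2i\lambda x_1}(q_1-q_2)(x_1,x')\,dx_1$ (which is continuous and compactly supported since $q_1-q_2 \in C(\overline{\Omega})$ is extended by zero), multiply by $\tau^{1/2}$, and let $\tau\to\infty$. The correction contributions are negligible: since $\norm{\rho_j}_{L^2}=O(\tau^{-1})$ and $\norm{m_j}_{L^2}=O(1)$, terms such as $\tau^{1/2}\int (q_1-q_2) e^{-2i\lambda x_1} m_1 \rho_2 = O(\tau^{-1/2})$ vanish, and likewise for $\rho_1 m_2$ and $\rho_1\rho_2$. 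Hence
$$
0 = c \int_{\gamma} e^{-2\lambda t} \Big[ \int_{\mR} e^{-2i\lambda x_1}(q_1-q_2)(x_1,\gamma(t))\,dx_1 \Big] dt = c\, T(q_1-q_2)(\lambda,\gamma),
$$
so $T(q_1-q_2)(\lambda,\gamma)=0$. The main obstacle is the stationary phase step: the stationary set is a whole curve rather than an isolated point, forcing a splitting into a trivial integration along $\gamma$ and a transverse asymptotic expansion, and the leading phase contributes a non-convergent oscillatory constant together with the power $\tau^{-1/2}$ that must be handled before the limit exists. One must also verify that the $\psi_j^{-1/2}$ singularities of the amplitudes are harmless, which holds because the centers lie outside $\overline{\Omega}_0$, and keep careful track of the balance between the $\tau^{1/2}$ used to extract the transform and the $O(\tau^{-1})$ decay of the corrections.
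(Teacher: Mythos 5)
Your overall skeleton (integral identity, CGO solutions from Proposition \ref{prop_cgo_existence}, passage to the limit $\tau\to\infty$) is the right one, but at the crucial step you take a different and, as written, non-closing route. The paper builds \emph{both} solutions from the \emph{same} radial phase $e^{isr}$ with a \emph{single} center $x_0'$ outside $\overline{\Omega}_0$, and pairs $u_1$ with the complex conjugate $\bar{u}_2$ (where $u_2$ solves the equation with potential $\bar q_2$, so that $\bar u_2$ solves the one with $q_2$). Then $e^{isr}\,\overline{e^{isr}} = e^{-2\lambda r}$ and $e^{-sx_1}\overline{e^{sx_1}} = e^{-2i\lambda x_1}$: the large parameter $\tau$ cancels completely from the product of amplitudes, which equals $e^{-2\lambda r}r^{-1}\abs{b(\theta)}^2$. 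The product therefore has a genuine $O(1)$ limit, no stationary phase is needed, the polar Jacobian absorbs the $r^{-1}$, and the transform is extracted by letting $\abs{b}^2$ approximate a delta at $\theta_0$. Your two-center construction instead leaves the oscillatory factor $e^{i\tau(\psi_1+\psi_2)}$ in the product and forces a stationary-phase analysis along the segment between the centers, with a main term of size $\tau^{-1/2}$.

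This is where the gap is. To extract the transform you must multiply by $\tau^{1/2}$ before letting $\tau\to\infty$, and the cross terms such as $\tau^{1/2}\int (q_1-q_2)e^{-2i\lambda x_1} m_1\rho_2\,dx$ then require $\norm{\rho_j}_{L^2(\Omega)} = o(\tau^{-1/2})$. You assert $\norm{\rho_j}_{L^2} = O(\tau^{-1})$, but this is not what the machinery provides: Proposition \ref{prop_cgo_existence} only gives $\norm{r_0}_{L^2(\Omega)} = o(1)$, and the source of this limitation is visible in the solvability estimate of Proposition \ref{prop_carleman_solvability}, whose right-hand side contains the boundary contributions $(\delta\tau)^{-1/2}\norm{f_-}_{L^2(S_-)} + \norm{f_-}_{L^2(S_0)}$ with no gain at all in $\tau$ on the near-tangential set $S_0$. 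Only the interior source term comes with the factor $\tau^{-1}$; the boundary corrections needed to enforce $\supp(u_j|_{\partial\Omega})\subset\Gamma_D$ (resp.\ $\Gamma_N$) are at best $O(\tau^{-1/2})$ and in general only $o(1)$. Consequently $\tau^{1/2}\,\norm{m_1}_{L^2}\norm{\rho_2}_{L^2}$ need not tend to zero and your limit does not exist as claimed. The paper's conjugation trick is precisely what makes the $o(1)$ remainder bound sufficient, since its main term is $O(1)$ rather than $O(\tau^{-1/2})$. (There are also secondary delicacies in your stationary-phase step --- the critical set is a whole segment and the amplitude $g$ is only continuous --- but these are repairable; the remainder estimate is the essential obstruction.)
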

\begin{proof}
By Lemma \ref{prop_integralidentity}, our assumptions imply that 
$$
\int_\Omega (q_1-q_2) u_1 \bar{u}_2 \,dx = 0
$$
for any $u_j \in H_{\Delta}(\Omega)$ satisfying $(-\Delta + q_1) u_1 = (-\Delta + \bar{q}_2) u_2 = 0$ in $\Omega$ and 
$$
\supp(u_1|_{\partial \Omega}) \subset \Gamma_D, \quad \supp(u_2|_{\partial \Omega}) \subset \Gamma_N.
$$
Fix $\lambda \in \mR$, let $s = \tau + i\lambda$ where $\tau > 0$ is sufficiently large, and fix a line $\gamma$ in $\mR^2$ that does not meet $\partial \Omega_0 \setminus E$. We use Proposition \ref{prop_cgo_existence} to find solutions $u_1$ and $u_2$ satisfying the above conditions and having the form 
\begin{align*}
u_1 &= e^{-s x_1} (e^{isr} r^{-1/2} b(\theta) + r_1), \\
u_2 &= e^{s x_1} (e^{isr} r^{-1/2} b(\theta) + r_2) \\
\end{align*}
where $(r,\theta)$ are polar normal coordinates in $\mR^2$ whose center is outside of $\Omega_0$ and such that the line $\gamma$ is given by $r \mapsto (r,\theta_0)$, $b \in C^{\infty}(S^1)$ is independent of $\tau$ and supported near $\theta_0$, and 
$$
\norm{r_j} = o(1) \text{ as } \tau \to \infty.
$$
We then have 
\begin{multline*}
0 = \lim_{\tau \to \infty} \int_\Omega (q_1-q_2) u_1 \bar{u}_2 \,dx \\
 = \int_{-\infty}^{\infty} \int_{0}^{\infty} \int_{S^1} (q_1-q_2)(x_1,r,\theta) e^{-2i\lambda x_1} e^{-2\lambda r} \abs{b(\theta)}^2 \,d\theta \,dr \,dx_1
\end{multline*}
Here we have extended $q_1-q_2$ by zero outside $\Omega$. The result follows by choosing $b$ to approximate a delta function at $\theta_0$.
\end{proof}

The uniqueness result now follows from the \emph{Helgason support theorem} for the X-ray transform \cite{Helgason}, which states that if $f$ is a piecewise continuous compactly supported function in $\mR^2$ that integrates to zero over any line that does not meet a compact convex set $K \subset \mR^2$, then $f|_{\mR^2 \setminus K} = 0$. The idea of reducing the attenuated X-ray transform to the unattenuated one comes from \cite{DKLSa}.

\begin{proof}[Proof of Theorem \ref{thm_ks1}]
Denote by $f$ the extension of $q_1-q_2$ by zero to $\mR^2$, and let 
$$
\hat{f}(\xi_1,x') = \int_{-\infty}^{\infty} e^{-ix_1 \xi_1} f(x_1,x') \,dx_1.
$$
Let also $K$ be the convex hull of $\partial \Omega_0 \setminus E$ in $\mR^2$. By Proposition \ref{prop_ks1_transform}, it is enough to show that if 
$$
Tf(\lambda,\gamma) = 0
$$
for all $\lambda \in \mR$ and for any line $\gamma$ that does not meet $K$, then $f(x_1,x') = 0$ whenever $x_1 \in \mR$ and $x' \in \mR^2 \setminus K$.

The condition $Tf(\lambda,\gamma) = 0$ implies that 
$$
\int_{-\infty}^{\infty} e^{-2\lambda t} \hat{f}(2\lambda,\gamma(t)) \,dt = 0
$$
for any line $\gamma$ that does not meet $K$. Setting $\lambda = 0$, the Helgason support theorem implies that $\hat{f}(0,x') = 0$ for $x' \in \mR^2 \setminus K$. We now differentiate the above identity with respect to $\lambda$ and set $\lambda = 0$ to obtain 
$$
\int_{-\infty}^{\infty} \left[ (-2t) \hat{f}(0,\gamma(t)) + 2 \frac{\partial \hat{f}}{\partial \xi_1}(0,\gamma(t)) \right] \,dt = 0.
$$
But we already saw that $\hat{f}(0,x') = 0$ for $x' \in \mR^2 \setminus K$. The Helgason support theorem then gives that 
$$
\frac{\partial \hat{f}}{\partial \xi_1}(0,x') = 0 \text{ for } x' \in \mR^2 \setminus K.
$$
Repeating this argument shows that 
$$
\left( \frac{\partial}{\partial \xi_1} \right)^k  \hat{f}(0,x') = 0 \text{ for } x' \in \mR^2 \setminus K
$$
for all $k \geq 0$. But $x_1 \mapsto f(x_1,x')$ is compactly supported, hence $\xi_1 \mapsto \hat{f}(\xi_1,x')$ is analytic, and we obtain that $\hat{f}(\xi_1,x') = 0$ for all $\xi_1 \in \mR$ and $x' \in \mR^2 \setminus K$. The result follows upon inverting the Fourier transform in $x_1$.
\end{proof}

Theorems \ref{thm_ks2} and \ref{thm_ks3}  can be proved by the same general argument as above. However, the fact that one has nonlinear limiting Carleman weights (the log weight and arg weight) leads, after a suitable conformal scaling, to a situation where the original domain $\overline{\Omega}$ with Euclidean metric is replaced by a compact Riemannian manifold $(M,g)$ with smooth boundary, and the condition 
$$
\Omega \subset \mR \times \Omega_0
$$
is replaced by the condition 
$$
(M,g) \subset (\mR \times M_0, g), \quad g = c(e \oplus g_0)
$$
where $c$ is a positive function (conformal factor), and the transversal domain $\overline{\Omega}_0$ with Euclidean metric is replaced by a transversal manifold $(M_0,g_0)$ that is compact with smooth boundary. Also the integrals over lines in $\overline{\Omega}_0$ are replaced by integrals over geodesics in $(M_0,g_0)$. This setup is similar to the results for the anisotropic Calder\'on problem in \cite{DKSaU}, \cite{DKLSa}. For the log weight, $(M_0,g_0)$ will be a domain in the hemisphere, and for the arg weight $(M_0,g_0)$ will be a domain in hyperbolic space. Correspondingly, one uses geodesics in the sphere and in hyperbolic space. In both cases, if $(M_0,g_0)$ is chosen to have strictly convex boundary, then it is a simple manifold with real-analytic metric and analogues of the Helgason support theorem are available. The full proofs are given in \cite{KSa}.

The proof of Theorem \ref{thm_ks4} requires a more general complex geometrical optics construction than the one described above. The reason is that one wants to get information on integrals along broken rays that may touch the inaccessible part $\Gamma_i$, and one then needs amplitudes in the complex geometrical optics solutions that concentrate along these broken rays and are very small on the inaccessible part. Such amplitudes may obtained from a Gaussian beam quasimode construction. This construction in connection with complex geometrical optics solutions is employed in \cite{DKLSa} for non-reflected geodesics (full data case) and in \cite{KSa} for reflected geodesics (partial data case), and we refer to these papers for the details.

\section{The linearized case} \label{sec_linearized}

Here we sketch the proof of Theorem \ref{thm_linearized}. A first reduction is to show that it suffices to prove a local version of the theorem (see \cite[Section 2]{DKSjU09}). The global version follows from the local one by using ideas in the spirit of the Runge approximation theorem, developed in an unpublished work of Alessandrini, Isozaki, and Uhlmann. Thus, matters reduce to proving the following ''local'' version.

\begin{thm} \label{thm_linearized_local}
Let $\Omega \subset \mR^n$, $n \geq 2$, be a bounded connected open set with $C^{\infty}$ boundary. Let $x_0 \in \partial \Omega$ and let $F$ be the complement of some open boundary neighborhood of $x_0$. There exists $\delta > 0$ such that given any $f \in L^{\infty}(\Omega)$, if we have the cancellation property 
$$
\int_{\Omega} f u_1 u_2 \,dx = 0
$$
for all $u_j \in C^{\infty}(\overline{\Omega})$ with $\Delta u_j = 0$ in $\Omega$ and $u_j|_F = 0$, then $f = 0$ in $B(x_0,\delta) \cap \Omega$.
\end{thm}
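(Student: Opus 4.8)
The plan is to prove the statement microlocally: I will show that the cancellation property forces the zero-extension of $f$ to $\mR^n$ to be real-analytic at $x_0$, and then deduce $f = 0$ near $x_0$ from the fact that this extension vanishes outside $\overline{\Omega}$. From now on $f$ denotes the extension by zero, and I normalize coordinates so that $x_0 = 0$ and so that $F$ lies at a fixed positive distance from $x_0$ (this uses that $\partial \Omega \setminus F$ contains an open neighborhood of $x_0$). The harmonic test functions will be assembled from the exact harmonic complex geometrical optics solutions $e^{\zeta \cdot x}$ with $\zeta \cdot \zeta = 0$, that is $\zeta = \tau(\alpha + i\beta)$ with $\alpha, \beta \in \mR^n$ orthonormal; these are the CGO solutions attached to the linear limiting Carleman weight $\phi(x) = \alpha \cdot x$.

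The central construction is, for each frequency $\xi$ in a suitable cone and each large $\tau$, a pair of \emph{exactly harmonic functions} $w_1, w_2 \in C^{\infty}(\overline{\Omega})$ that concentrate at $x_0$: each $w_j$ is obtained by superposing the plane-wave solutions $e^{\zeta \cdot x}$ over a small analytic family of $\zeta$'s, chosen so that a stationary-phase (steepest descent) analysis gives $w_1 w_2 \approx e^{i\tau \xi \cdot (x - x_0)} e^{-\tau \abs{x - x_0}^2/C}$ up to normalization, i.e. the kernel of the FBI transform of $f$ at $(x_0, \xi)$. The essential point --- and the reason spatial concentration, not merely oscillation, is needed --- is that such beams are exponentially small away from $x_0$, hence on $F$. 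One then corrects them to vanish \emph{exactly} on $F$: picking a cutoff $\chi$ that equals $1$ near $F$ and $0$ near $x_0$, let $h_j$ be the harmonic function with Dirichlet data $(\chi w_j)|_{\partial \Omega}$ and set $u_j = w_j - h_j$. Then $u_j$ is harmonic, smooth up to the boundary, satisfies $u_j|_F = 0$ by construction, and the maximum principle shows $\norm{h_j}_{L^{\infty}(\Omega)}$ is exponentially small, since the boundary data of $h_j$ is supported where $w_j$ is exponentially small.

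Inserting $u_1, u_2$ into the hypothesis $\int_{\Omega} f u_1 u_2\,dx = 0$ and using $u_1 u_2 = w_1 w_2 + O(e^{-\tau/C})$, I obtain that the FBI transform $Tf(x_0, \xi, \tau) = \int e^{i\tau \xi \cdot (x_0 - y) - \tau \abs{x_0 - y}^2/2} f(y)\,dy$ is bounded by the total error of the construction. If this error is exponentially small in $\tau$, uniformly for $\xi$ in the cone, then by definition $x_0 \notin \mathrm{WF}_a(f)$ in those directions; rotating the beams to sweep $\xi$ over all of $S^{n-1}$ covers every direction, so $f$ is real-analytic in some ball $B(x_0, \delta)$. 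Since $f$ vanishes on the nonempty open set $B(x_0, \delta) \setminus \overline{\Omega}$, analytic continuation forces $f \equiv 0$ on $B(x_0, \delta)$, hence in $B(x_0, \delta) \cap \Omega$, as claimed; the admissible $\delta$ is governed by the region in which the concentrating phases remain valid.

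The main obstacle is the analytic-microlocal heart of the argument: one must carry out the beam construction in the \emph{analytic} category so that the deviation of $w_1 w_2$ from the FBI kernel, and hence the residual $\int_{\Omega} f w_1 w_2\,dx - Tf(x_0,\xi,\tau)$, are \emph{exponentially} small rather than merely $O(\tau^{-1})$. A smooth WKB or Gaussian-beam construction yields only polynomial errors, which cannot detect the analytic wave front set; obtaining exponential control requires solving the relevant transport and superposition problems with analytic symbols and the associated exponentially weighted estimates, which is precisely the machinery of Sj\"ostrand's analytic microlocal analysis. Verifying that the beams can be aimed in every direction $\xi$ from the boundary point $x_0$, compatibly with the concentration needed to make them small on $F$, is the geometric counterpart of this difficulty.
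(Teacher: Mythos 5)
There is a genuine gap, and it sits at the center of your argument: the claim that the beams can be aimed so that $w_1 w_2$ approximates the FBI kernel $e^{i\tau \xi \cdot (x-x_0)} e^{-\tau \abs{x-x_0}^2/C}$ for \emph{every} real direction $\xi \in S^{n-1}$. This is not possible. Your $w_j$ are steepest-descent superpositions of the exponentials $e^{\zeta \cdot x}$ with $\zeta \cdot \zeta = 0$, so to leading order $w_j \approx e^{\tau \Psi_j}$ where each phase $\Psi_j$ satisfies the eikonal equation $\nabla \Psi_j \cdot \nabla \Psi_j = 0$; hence $\re \Psi_j$ is harmonic (in $n=2$ each $\Psi_j$ is holomorphic or antiholomorphic, and a similar rigidity persists for $n \geq 3$). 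The requirements that each factor be bounded on $\overline{\Omega}$ and exponentially small on $F$ force $\re \Psi_j \leq 0$ on $\overline{\Omega}$ with equality at $x_0$, so by the Hopf lemma $\re(\Psi_1+\Psi_2)$ has strictly positive outward normal derivative at $x_0$: the product \emph{necessarily} decays exponentially into $\Omega$, at a rate comparable to its oscillation frequency. In particular $\re \Psi_1 + \re \Psi_2$ is harmonic and can never equal the strictly superharmonic function $-\abs{x-x_0}^2/C$, and a purely tangential oscillation $e^{i\tau \sigma x_2}$ with no normal damping would force one of the two factors to be exponentially large inside $\Omega$ (ruining the cross terms $\int f\, w_1 h_2\,dx$) or on $F$ (ruining the correction). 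What the cancellation property actually controls is therefore only the Fourier--Laplace/FBI transform of $f$ in a small \emph{complex conic neighborhood of the outward conormal direction} at $x_0$ --- not all of $S^{n-1}$ --- so you cannot conclude $x_0 \notin \mathrm{WF}_a(f)$ by "sweeping $\xi$."

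The missing ingredient is precisely the step that converts conormal decay plus one-sided support into vanishing: Kashiwara's watermelon theorem. The paper's proof makes this explicit. It uses the bare exponentials $e^{-i\zeta\cdot x/h}$ (no Gaussian beam construction at all: after a Kelvin-transform normalization $\Omega \subset \{ \abs{x+e_1}<1 \}$, convexity of the ball makes these exponentials concentrate at $x_0$ and decay on $F$), obtains from the cancellation an improved bound on $\int_\Omega f e^{-\frac{i}{h}x\cdot z}\,dx$ only for $z$ in a complex cone about $2iae_1$, converts this into decay of the Segal--Bargmann transform $Tf(z)$ for $z$ near $2ae_1$ via an exact integral identity (the Gaussian is introduced at the level of the transform, not of the solutions), and then --- this is the analytic heart, Lemma \ref{lemma_linearized_watermelon} --- propagates the decay to the strip $\abs{\re z_1} \leq \delta$ by harmonic majorization in the complexified normal variable $z_1$, crucially using the a priori estimate \eqref{linearized_estimate2} that encodes $\supp f \subset \{x_1 \leq 0\}$. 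Only then does the heat-kernel limit \eqref{linearized_heatkernel_limit} give $f=0$ near $x_0$. Your proposal contains no counterpart of this propagation step, and without it the argument does not close; moreover, the analytic-Gaussian-beam machinery you invoke as the "main obstacle" is not actually needed once the argument is organized this way.
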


The next step is to use conformal transformations (in particular Kelvin transforms) of harmonic functions to reduce to the following situation: $x_0 = 0$, $\Omega \subset \{ x \in \mR^n \,;\, \abs{x+e_1} < 1 \}$ where $e_1 = (1,0,\ldots,0)$ is the first coordinate vector, the tangent hyperplane to $0$ is given by $\{ x_1 = 0 \}$, and $F \subset \{ x \in \partial \Omega \,;\, x_1 \leq -2c \}$ for some $c > 0$. See \cite[Section 3]{DKSjU09} for this reduction.

From this point on, the proof is inspired by the proof of Kashiwara's ''watermelon theorem'' in analytic microlocal analysis. We introduce the Segal-Bargmann transform of a function $f \in L^{\infty}(\mR^n)$ with compact support by 
$$
Tf(z) = \int_{\mR^n} e^{-\frac{1}{2h}(z-y)^2} f(y) \,dy, \quad z = x+i\xi \in \mC^n.
$$
The Segal-Bargmann transform is related to the microlocal analysis of analytic singularities of a distribution. We mention the a priori exponential bound 
\begin{equation} \label{linearized_estimate1}
\abs{Tf(z)} \leq (2\pi h)^{n/2} e^{\frac{1}{2h} \abs{\im z}^2} \norm{f}_{L^{\infty}}
\end{equation}
and the fact that if $f$ is supported in $\{ x_1 \leq 0 \}$, we can improve this to 
\begin{equation} \label{linearized_estimate2}
\abs{Tf(z)} \leq (2\pi h)^{n/2} e^{\frac{1}{2h} \abs{\im z}^2-\abs{\re z_1}^2} \norm{f}_{L^{\infty}}
\end{equation}
when $\re z_1 \geq 0$. Both of these bounds are straightforward. Note also that when we multiply by $(2\pi)^{-n/2}$, when $z = x \in \mR^n$, we obtain the Gaussian heat kernel, and hence if $f$ has compact support we have 
\begin{equation} \label{linearized_heatkernel_limit}
\lim_{h \to 0} \,(2\pi h)^{-n/2} Tf(x) = f(x) \quad \text{for a.e.~$x \in \mR^n$}.
\end{equation}

The strategy is to use the cancellation property (we extend $f$ to be $0$ outside of $\Omega$) to obtain an exponential decay of $Tf$ when $x \in \mR^n$ is close to $0$, thus yielding the vanishing of $f$ near $0$, as desired. In order to accomplish this, let us temporarily consider the case $n=2$ and define $\gamma = i e_1 + e_2 = (i, 1) \in \mC^2$. Note that $\{ \gamma, \bar{\gamma} \}$ is a basis of $\mC^2$, and that $\{ \zeta \in \mC^2 \,;\, \zeta \cdot \zeta = 0 \}$ is the union of two complex lines $\mC \gamma \cup \mC \overline{\gamma}$. Hence, it is easy to see if $\eps > 0$ is small enough, then any $z \in \mC^2$ with $\abs{z-2i e_1} < 2\eps$ may be decomposed as a sum of the form 
\begin{equation} \label{linearized_z_decomposition}
z = \zeta + \eta, \quad \zeta \cdot \zeta = \eta \cdot \eta = 0,
\end{equation}
where $\abs{\zeta-\gamma} \leq C \eps$, $\abs{\eta+\bar{\gamma}} \leq C \eps$. This last fact extends to $\mC^n$, $n \geq 2$, see \cite[Section 3]{DKSjU09}.

Recall that the exponentials $e^{-i\frac{\zeta \cdot x}{h}}$, $\zeta \in \mC^n$â $\zeta \cdot \zeta = 0$, are harmonic functions. We need to modify these by adding correction terms to obtain harmonic functions $u$ satisfying the boundary requirement $u|_F = 0$. Let $\chi \in C^{\infty}_c(\mR^n)$ be a cutoff function which is supported in $\{ x_1 \leq -c \}$ and equals $1$ on $\{ x_1 \leq -2c \}$. Consider the solution $w = w(\,\cdot\,;\zeta)$ of the Dirichlet problem 
$$
\Delta w = 0 \text{ in } \Omega, \quad w|_{\partial \Omega} = -e^{-i\frac{\zeta \cdot x}{h}} \chi|_{\partial \Omega}.
$$
We have the following bound on $w$:
\begin{align} \label{linearized_w_estimate}
\norm{w}_{H^1(\Omega)} &\leq C_1 \norm{e^{-i\frac{\zeta \cdot x}{h}} \chi}_{H^{1/2}(\partial \Omega)} \\
 &\leq C_2(1+h^{-1} \abs{\zeta})^{1/2} e^{-c\frac{\im \zeta_1}{h}} e^{\frac{1}{h} \abs{\im \zeta'}} \notag
\end{align}
where $\im \zeta_1 \geq 0$, from the choice of $\chi$ and our normalization of $\Omega$.

We have the cancellation property 
$$
\int_{\Omega} f(x) u(x,\zeta) u(x,\eta) \,dx = 0, \quad \zeta \cdot \zeta = \eta \cdot \eta = 0,
$$
where $u(x,\zeta) = e^{-i\frac{\zeta \cdot x}{h}} + w(x,\zeta)$, which is a harmonic function in $C^{\infty}(\overline{\Omega})$ and satisfies $u|_F = 0$. The identity above can be rewritten as 
\begin{multline*}
\int_{\Omega} f(x) e^{-\frac{i}{h} x \cdot (\zeta+\eta)} \,dx = -\int_{\Omega} f(x) e^{-\frac{i}{h} x \cdot \zeta} w(x,\eta) \,dx \\
- \int_{\Omega} f(x) e^{-\frac{i}{h} x \cdot \eta} w(x,\zeta) \,dx - \int_{\Omega} f(x) w(x,\zeta) w(x,\eta) \,dx.
\end{multline*}
Using the estimate \eqref{linearized_w_estimate}, we obtain 
\begin{multline*}
\left| \int_{\Omega} f(x) e^{-\frac{i}{h} x \cdot (\zeta+\eta)} \,dx \right| \leq C_3 \norm{f}_{L^{\infty}} (1+h^{-1}\abs{\eta})^{1/2} (1+h^{-1}\abs{\zeta})^{-1/2} \\
 \times e^{-\frac{c}{h} \min \{ \im \zeta_1, \im \eta_1 \}} e^{\frac{1}{h}(\abs{\im \zeta'}+\abs{\im \eta'})},
\end{multline*}
when $\im \zeta_1 \geq 0$, $\im \eta_1 \geq 0$, and $\zeta \cdot \zeta = \eta \cdot \eta = 0$. In particular, if $\abs{\zeta - a \gamma} \leq C \eps a$, $\abs{\eta + a \bar{\gamma}} < C \eps a$, $\eps \leq 1/(2C)$, then 
$$
\left| \int_{\Omega} f(x) e^{-\frac{i}{h} x \cdot (\zeta+\eta)} \,dx \right| \leq C_4 h^{-1} \norm{f}_{L^{\infty}(\Omega)} e^{-\frac{ca}{2h}} e^{\frac{2C\eps a}{h}}.
$$
Hence if $z \in \mC^n$ and $\abs{z-2i a e_1} < 2\eps a$, $\eps$ small enough, using a rescaled version of the decomposition \ref{linearized_z_decomposition} gives 
\begin{equation} \label{linearized_rescaled_estimate}
\left| \int_{\Omega} f(x) e^{-\frac{i}{h} x \cdot z} \,dx \right| \leq C_4 h^{-1} \norm{f}_{L^{\infty}(\Omega)} e^{-\frac{ca}{h}} e^{\frac{2C\eps a}{h}}.
\end{equation}

To relate the last estimate to the given estimates on the Segal-Bargmann transform of $f$, we use the well-known formula 
$$
e^{-\frac{1}{2h}(z-y)^2} = e^{-\frac{z^2}{2h}} (2\pi h)^{-n/2} \int e^{-\frac{t^2}{2h}} e^{-\frac{i}{h} y \cdot (t+iz)} \,dt
$$
which gives 
$$
Tf(z) = (2\pi h)^{-n/2} \int \int e^{-\frac{1}{2h}(z^2+t^2)} e^{-\frac{i}{h} y \cdot (t+iz)} f(y) \,dt \,dy.
$$
For our $f$, supported in $\Omega$ and verifying the cancellation of the integral in our hypothesis, the estimate \eqref{linearized_rescaled_estimate} and the formula above allow us to improve the estimate \eqref{linearized_estimate2}. If $\re z_1 \geq 0$, then 
\begin{multline*}
\abs{Tf(z)} \leq (2\pi h)^{-n/2} \int e^{\frac{1}{2h}(\abs{\im z}^2-\abs{\re z}^2-t^2)} \left| \int e^{-\frac{i}{h} y \cdot (t+iz)} \,dy \right| \,dt \\
 \leq \frac{e^{\frac{1}{2h}(\abs{\im z}^2-\abs{\re z}^2)}}{(2\pi h)^{n/2}} \bigg[ \int_{\abs{t} \leq \eps a} e^{-\frac{t^2}{2h}} \left| \int e^{-\frac{i}{h} y \cdot (t+iz)} f(y) \,dy \right| \,dt \\
  + \int_{\abs{t} \geq \eps a} e^{-\frac{t^2}{2h}} \left| \int e^{-\frac{i}{h} y \cdot (t+iz)} f(y) \,dy \right| \,dt \bigg]
\end{multline*}
Using \eqref{linearized_rescaled_estimate} with $z$ replaced by $t+iz$, when $\abs{z-2a e_1} < \eps a$ and $\abs{t} \leq \eps a$ we obtain 
\begin{multline}
\abs{Tf(z)} \leq e^{\frac{1}{2h}(\abs{\im z}^2-\abs{\re z}^2)} \bigg[ \sup_{\abs{t} \leq \eps a} \left| \int e^{-\frac{i}{h} y \cdot (t+x)} f(y) \,dy \right| \\
 + \sqrt{2} e^{\frac{1}{h}\abs{\re z'}} e^{-\frac{\eps^2 a^2}{4h}} \int \abs{f(y)} \,dy \bigg] \\
 \leq C_5 h^{-1} \norm{f}_{L^{\infty}(\Omega)} e^{\frac{1}{2h}(\abs{\im z}^2-\abs{\re z}^2)} \left[ e^{-\frac{ac}{2h}} e^{\frac{2C \eps a}{h}} + e^{-\frac{\eps^2 a^2}{4h}} e^{\frac{\eps a}{h}} \right],
\end{multline}
provided that $\abs{z-2a e_1} < \eps a$. Choosing $\eps < c/(8C)$ and $a > (c+4\eps)/(\eps^2)$, we obtain the bound 
$$
\abs{Tf(z)} \leq 2 C_5 h^{-1} \norm{f}_{L^{\infty}(\Omega)} e^{\frac{1}{2h}(\abs{\im z}^2-\abs{\re z}^2 - \frac{ca}{2}}).
$$
Combining \eqref{linearized_estimate1}, \eqref{linearized_estimate2} and \eqref{linearized_rescaled_estimate} we have 
$$
e^{-\frac{\Phi(z_1)}{2h}} \abs{Tf(z_1,x')} \leq \frac{C}{h} \norm{f}_{L^{\infty}(\Omega)} \left\{ \begin{array}{ll} 1, & z_1 \in \mC, \\ e^{-\frac{ca}{4h}}, & \abs{z_1-2a} \leq \frac{\eps a}{2}, \abs{x'} < \frac{\eps a}{2} \end{array} \right.
$$
where $x' \in \mR^{n-1}$ and 
$$
\Phi(z_1) = \left\{ \begin{array}{ll} (\im z_1)^2, & \text{when } \re z_1 \leq 0, \\ (\im z_1)^2-(\re z_1)^2, & \text{when } \re z_1 \geq 0. \end{array} \right.
$$
Now, inspired by the proof of the ''watermelon theorem'', we use the following:

\begin{lemma} \label{lemma_linearized_watermelon}
Let $b, L > 0$. Let $F$ be an entire function in $\mC$ such that 
$$
e^{-\frac{\Phi(s)}{h}} \abs{F(s)} \leq \left\{ \begin{array}{ll} 1, & s \in \mC, \\ e^{-\frac{c}{2h}}, & \text{when } \abs{s-L} \leq b. \end{array} \right.
$$
Then for all $r \geq 0$, there exist $c', \delta > 0$ (independent of $F$) such that 
$$
\abs{F(s)} \leq e^{-\frac{c'}{2h}+\frac{(\im s)^2}{2h}}
$$
when $\abs{\re s} \leq \delta$ and $\abs{\im s} \leq r$.
\end{lemma}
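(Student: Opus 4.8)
The plan is to pass to the subharmonic function $v(s) = h\log\abs{F(s)}$. Dividing the two hypotheses by $h$ turns them into the $h$-independent statements $v \le \Phi$ on all of $\mC$ and $v \le \Phi - c/2$ on the disk $\abs{s-L}\le b$ (which, since $L,b>0$, we may take to lie in the open right half-plane), while the desired conclusion becomes $v(s) \le (\im s)^2/2 - c'/2$ on the box $\{\abs{\re s}\le\delta,\ \abs{\im s}\le r\}$. The decisive structural point is that $\Phi$ is \emph{harmonic} in $\{\re s>0\}$, where $\Phi(s) = (\im s)^2-(\re s)^2 = -\re(s^2)$, but only \emph{subharmonic} in $\{\re s<0\}$, where $\Phi(s)=(\im s)^2$. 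Thus the defect on the disk can be transported by the maximum principle as long as one stays in the right half-plane, and the real work is to cross the imaginary axis.

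First I would exploit the harmonic region. Set $g(s) = F(s)e^{s^2/h}$, which is holomorphic on $\{\re s>0\}$, and since $\re(s^2)=-\Phi$ there it satisfies $\abs{g}\le 1$ on $\{\re s>0\}$ and $\abs{g}\le e^{-c/(2h)}$ on $\abs{s-L}\le b$. The two-constants theorem (harmonic majorization of $\log\abs{g}$ by the harmonic measure $\omega$ of $\partial\{\abs{s-L}\le b\}$ in $\{\re s>0\}\setminus\{\abs{s-L}\le b\}$) then yields $\abs{g(s)}\le e^{-\frac{c}{2h}\omega(s)}$, that is $v(s)\le -\re(s^2)-\tfrac{c}{2}\omega(s)$ throughout $\{\re s>0\}$. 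This gives a genuine improvement once $\re s$ is bounded away from $0$, since $\omega$ is continuous and strictly positive there; but because $\omega$ vanishes on the axis, it gives \emph{nothing} at the points the application actually needs, namely $s$ near the origin.

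The second and harder step is therefore to carry the gain across $\{\re s=0\}$ into the full box $\{\abs{\re s}\le\delta\}$ and to produce the stated majorant $(\im s)^2/2-c'/2$. Here the clean bounded-holomorphic argument is unavailable, because $\Phi$ is no longer harmonic in the left half-plane and, correspondingly, the multiplier $e^{s^2/(2h)}$ that would convert the target into a simple $\sup$-bound fails to keep $g$ bounded. The plan is instead to couple the two bounds now in hand --- the improved bound $v\le -\re(s^2)-\tfrac{c}{2}\omega$ on $\{\re s>0\}$ and the global bound $v\le(\im s)^2$ --- by a subharmonic comparison. Concretely, I would reflect across the axis via $s\mapsto -\bar s$ and, on a fixed box $\{\abs{\re s}\le\delta,\ \abs{\im s}\le r'\}$ with $r'$ slightly larger than $r$, compare $v$ against an explicit harmonic barrier assembled from $\re(s^2)$, the global $(\im s)^2$ bound, and a multiple of the harmonic measure that transports the defect $c/2$ through the corner. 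Subharmonicity of $v$ together with the maximum principle should then force $v$ below $(\im s)^2/2 - c'/2$ on the smaller box; the symmetric averaging across the axis of the right-half-plane value $-\re(s^2)$ (which equals $(\im s)^2$ on the axis) against the global value $(\im s)^2$ is what accounts for the factor $1/2$.

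The main obstacle is precisely this last step: the corner of the indicator at $\re s=0$, where $\Phi$ switches from harmonic to strictly subharmonic. Since the harmonic measure of the disk seen from the imaginary axis inside $\{\re s>0\}$ is zero, no improvement reaches the axis from the right-hand argument alone, and the gain there must be extracted from the global subharmonicity of $v$ itself; it is this coupling --- the analytic-microlocal ``watermelon'' mechanism --- that both propagates the defect across the axis and is responsible for the loss from $(\im s)^2$ to $(\im s)^2/2$. The remaining technical points, namely justifying the maximum principle in the presence of order-two growth (a Phragm\'en--Lindel\"of argument carried out in large disks) and securing a lower bound for the relevant harmonic measure that is uniform in $F$, are routine once the barrier is in place.
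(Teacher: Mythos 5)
Your starting point is the same as the paper's: the paper itself gives no proof of this lemma beyond the remark that it ``rests on a harmonic majorization argument which exploits the subharmonicity of $-(\im s)^2+(\re s)^2$'' (deferring to \cite[Lemma 4.1]{DKSjU09}), and your passage to $v=h\log\abs{F}$ with $v\le\Phi$ globally and $v\le\Phi-c/2$ on the disk is the right normalization. But the proposal stops exactly where the proof has to happen: the ``explicit harmonic barrier'' that carries the defect across the imaginary axis is never constructed, and the two mechanisms you offer for that step do not work as described. The reflection $s\mapsto-\bar s$ gives you nothing, since $F$ has no symmetry and the only information in $\{\re s<0\}$ is the global bound $v\le(\im s)^2$; and the ``symmetric averaging'' you invoke to explain the factor $1/2$ does not compute --- the average of $(\im s)^2-(\re s)^2$ and $(\im s)^2$ is $(\im s)^2-(\re s)^2/2$, which equals $(\im s)^2$, not $(\im s)^2/2$, on the axis. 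Your preliminary two-constants step in the half-plane is correct but, as you yourself observe, contributes nothing at the axis, so it is a detour.

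The missing idea is a quantitative balance, not a corner-crossing. Set $w=v+\re(s^2)$, which is subharmonic on all of $\mC$ and satisfies $w\le 0$ on $\{\re s\ge 0\}$, $w\le -c/2$ on $\abs{s-L}\le b$, and $w\le(\re s)^2$ on $\{\re s\le 0\}$. Apply the maximum principle on the region $D_{\delta_0}=\{-\delta_0<\re s<A,\ \abs{\im s}<B\}\setminus\overline{B(L,b)}$ with $B>r$ and $A$ fixed: the boundary data is $\le\delta_0^2$ on the portion in $\{\re s\le 0\}$, $\le 0$ elsewhere, and $\le -c/2$ on $\partial B(L,b)$. For $s$ with $\abs{\re s}\le\delta_0/2$ and $\abs{\im s}\le r$, the harmonic measure of $\partial B(L,b)$ in $D_{\delta_0}$ is bounded below by $c_1\delta_0$ (it vanishes only to first order as the point approaches the flat left boundary), while the positive contribution is at most $\delta_0^2$ (the obstruction $\Phi+\re(s^2)=(\re s)^2$ vanishes to second order at the axis). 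Choosing $\delta_0\lesssim c\,c_1$ makes $\delta_0^2-\tfrac{c}{2}c_1\delta_0\le-\tfrac{c c_1}{4}\delta_0=:-c'/2$, whence $v\le(\im s)^2-(\re s)^2-c'/2$ on the box. This first-order-versus-second-order competition is the actual watermelon mechanism, and it is absent from your proposal; note also that it yields the gain against the weight $(\im s)^2$ rather than $(\im s)^2/2$, which is what the application (at $\im s=0$) in fact uses.
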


The proof of the lemma rests on a harmonic majorization argument, which exploits the subharmonicity of $-(\im s)^2+(\re s)^2$ (see \cite[Lemma 4.1 and Remark 4.2]{DKSjU09} for the details of the proof). Next we apply Lemma \ref{lemma_linearized_watermelon} to 
$$
F(s) = \frac{h Tf(s,x')}{C \norm{f}_{L^{\infty}(\Omega)}}.
$$
We obtain in particular that 
$$
\abs{Tf(x)} \leq C h^{-1} \norm{f}_{L^{\infty}(\Omega)} e^{-\frac{c'}{2h}}, \quad x \in \Omega, \abs{x_1} < \delta
$$
for $\delta$ small. Multiplying by $(2\pi h)^{-n/2}$ and letting $h \to 0$, using \eqref{linearized_heatkernel_limit}, we deduce that $f(x) = 0$ for $x \in \Omega$, $-\delta \leq x_1 \leq 0$. The completes the proof of Theorem \ref{thm_linearized_local} and hence of Theorem \ref{thm_linearized}.

\section{Open problems} \label{sec_open}

The following is a list of some questions related to the partial data problem that are open, as far as we know, at the time of writing this. The first question concerns the local data problem for $n \geq 3$. (As discussed above, this result for $n=2$ is known at least for sufficiently regular coefficients.)

\bigskip

\noindent {\bf Question 1. (Local data in dimensions $n \geq 3$)} \ 
If $\Omega$ is a bounded domain in $\mR^n$, $n \geq 3$, if $\Gamma$ is an arbitrary nonempty open subset of $\partial \Omega$, and if $q_1, q_2 \in L^{\infty}(\Omega)$, show that $C_{q_1}^{\Gamma,\Gamma} = C_{q_2}^{\Gamma,\Gamma}$ implies $q_1 = q_2$.

\bigskip

The next question concerns data on disjoint sets. The general case is open even for the linearized problem in any dimension. Partial results for $n=2$ are given in \cite{IUY_disjoint}.

\bigskip

\noindent {\bf Question 2. (Data on disjoint sets in dimensions $n \geq 2$)} \ 
If $\Omega$ is a bounded domain in $\mR^n$, $n \geq 2$, if $\Gamma_D$ and $\Gamma_N$ are arbitrary disjoint open subsets of $\partial \Omega$, and if $q_1, q_2 \in L^{\infty}(\Omega)$, show that $C_{q_1}^{\Gamma_D,\Gamma_N} = C_{q_2}^{\Gamma_D,\Gamma_N}$ implies $q_1 = q_2$.

\bigskip

One can also ask for optimal regularity conditions for the coefficients in partial data results. If $n=2$ the full data result is known for $L^{\infty}$ conductivities \cite{AP}.

\bigskip

\noindent {\bf Question 3. (Local data for nonsmooth conductivities)} \ 
If $\Omega$ is a bounded domain in $\mR^2$, if $\Gamma$ is an arbitrary nonempty open subset of $\partial \Omega$, and if $\gamma_1, \gamma_2 \in L^{\infty}(\Omega)$ satisfy $\gamma_1, \gamma_2 \geq c > 0$, show that $C_{\gamma_1}^{\Gamma,\Gamma} = C_{\gamma_2}^{\Gamma,\Gamma}$ implies $\gamma_1 = \gamma_2$.

\bigskip

The next question concerns optimal stability for partial data results. In the full data case it is known that in general one has a logarithmic modulus of continuity for determining an unknown coefficient from boundary measurements, and this result is optimal. We refer to the survey \cite{Alessandrini_survey}. Stability results for partial boundary measurements based on the reflection approach \cite{HeckWang2}, \cite{Caro_stability} and in the case where the coefficient is known near the boundary \cite{Alessandrini_partial} also have logarithmic stability. However, available results for the Carleman estimate approach \cite{HeckWang}, \cite{Tzou}, \cite{CDR} seem to involve weaker moduli of continuity (log log or worse), and one can ask if logarithmic stability still holds.

\bigskip

\noindent {\bf Question 4. (Optimal stability for the Carleman estimate approach)} \ 
What is the optimal stability for the partial data uniqueness result of \cite{KSU}?

\bigskip

Finally, there are many open questions related to partial data for elliptic systems. In the introduction we mentioned results for systems when $n=2$ and for the reflection approach when $n \geq 3$, but it seems that there are no partial data results for systems using the Carleman estimate approach in $n \geq 3$ except for \cite{SaloTzou}. As an example, one can consider the time-harmonic Maxwell equations for which the full data result is known \cite{OPS}.

\bigskip

\noindent {\bf Question 5. (Carleman estimate approach for systems)} \ 
Prove an analogue of the partial data result of \cite{KSU} for the time-harmonic Maxwell equations with scalar coefficients as in \cite{OPS}.

\bibliographystyle{alpha}

\begin{thebibliography}{DKSaU09}
\bibitem[AGTU11]{GT_system} P. Albin, C. Guillarmou, L. Tzou, G. Uhlmann, \textit{{Inverse boundary problems for systems in two dimensions}}, Ann. Inst. H. Poincar\'e (to appear), arXiv:1105.4565.
\bibitem[Al07]{Alessandrini_survey} G. Alessandrini, \textit{{Open issues of stability for the inverse conductivity problem}}, J. Inverse Ill-Posed Probl. \textbf{15} (2007), 451--460.
\bibitem[AK12]{Alessandrini_partial} G. Alessandrini, K. Kim, \emph{{Single-logarithmic stability for the Calder{\'o}n problem with local data}}, J. Inverse Ill-posed Probl. \textbf{20} (2012), 389--400.
\bibitem[AU04]{AU} H. Ammari, G. Uhlmann, \emph{{Reconstruction of the potential from partial Cauchy data for the Schr{\"o}dinger equation}}, Indiana Math J. \textbf{53} (2004), 169--184.
\bibitem[ALP05]{ALP} K. Astala, M. Lassas, L. P\"aiv\"arinta, \textit{Calder{\'o}n's inverse problem for anisotropic conductivity in the plane}, Comm. PDE \textbf{30} (2005), 207--224.
\bibitem[AP06]{AP} K. Astala, L. P\"aiv\"arinta, \textit{Calder{\'o}n's inverse conductivity problem in the plane}, Ann. of Math. \textbf{163} (2006), 265--299.
\bibitem[Be09]{Benjoud} H. Ben Joud, \textit{{A stability estimate for an inverse problem for the Schr{\"o}dinger equation in a magnetic field from partial boundary measurements}}, Inverse Problems \textbf{25} (2009), 045012.
\bibitem[Bu08]{Bu} A.L. Bukhgeim, \textit{Recovering a potential from Cauchy data in the two-dimensional case}, J. Inverse Ill-posed Probl. \textbf{16} (2008), 19--34.
\bibitem[BU01]{BU} A.L. Bukhgeim, G. Uhlmann, \textit{Recovering a potential from partial Cauchy data}, Comm. PDE \textbf{27} (2002), 653--668.
\bibitem[Ca80]{C} A.P. Calder{\'o}n, \emph{On an inverse boundary value problem}, Seminar on Numerical Analysis and its Applications to Continuum Physics, Soc. Brasileira de Matem{\'a}tica, R{\'i}o de Janeiro, 1980.
\bibitem[Ca11]{Caro_stability} P. Caro, \textit{{On an inverse problem in electromagnetism with local data: stability and uniqueness}}, Inverse Probl. Imaging \textbf{5} (2011), 297--322.
\bibitem[CDR12]{CDR}
P. Caro, D. Dos Santos Ferreira, A. Ruiz, \textit{{Stability estimates for the Radon transform with restricted data and applications}}, preprint (2012), arXiv:1211.1887.
\bibitem[COS09]{COS}
P. Caro, P. Ola, M. Salo, \emph{Inverse boundary value problem for Maxwell equations
with local data}, Comm. PDE \textbf{34} (2009), 1425--1464.
\bibitem[Ch11]{Chung} F.J. Chung, \textit{{A partial data result for the magnetic Schrodinger inverse problem}}, preprint (2011), arXiv:1111.6658.
\bibitem[Ch12]{Chung_ND} F.J. Chung, \textit{{Partial data for the Neumann-Dirichlet map}}, preprint (2012), arXiv:1211.0211.
\bibitem[DKSaU09]{DKSaU}
D. Dos Santos Ferreira, C.E. Kenig, M. Salo, G. Uhlmann, \emph{{Limiting Carleman weights and anisotropic inverse problems}}, Invent. Math. \textbf{178} (2009), 119--171.
\bibitem[DKSjU07]{DKSjU07} D. Dos Santos~Ferreira, C.E. Kenig, J. Sj{\"o}strand, G. Uhlmann, \emph{{Determining a magnetic Schr{\"o}dinger operator from partial Cauchy data}}, Comm. Math. Phys. \textbf{271} (2007), 467--488.
\bibitem[DKSjU09]{DKSjU09} D. Dos Santos~Ferreira, C.E. Kenig, J. Sj{\"o}strand, G. Uhlmann, \emph{{On the linearized local Calder\'on problem}}, Math. Res. Lett. \textbf{16} (2009), 955--970.
\bibitem[DKLS13]{DKLSa} D. Dos Santos Ferreira, Y. Kurylev, M. Lassas, M. Salo, \emph{{The Calder{\'o}n problem in transversally anisotropic geometries}}, in preparation.
\bibitem[Es04]{Eskin_obstacles} G. Eskin, \emph{{Inverse boundary value problems for domains with several obstacles}}, Inverse Problems \textbf{20} (2004), 1497--1516.
\bibitem[Fa66]{Faddeev}
L.D. Faddeev, \emph{{Increasing solutions of the Schr\"odinger equation}}, Sov. Phys. Dokl. \textbf{10} (1966),
1033--1035.
\bibitem[Fa07]{Fathallah} I.K. Fathallah, \textit{{Stability for the inverse potential problem by the local Dirichlet-to-Neumann map for the Schr{\"o}dinger equation}}, Appl. Anal. \textbf{86} (2007), 899914.
\bibitem[Ge08]{Gebauer} B. Gebauer, \textit{{Localized potentials in electrical impedance tomography}}, Inverse Probl. Imaging \textbf{2} (2008), 251--269.
\bibitem[GT11a]{GT} C.~Guillarmou, L.~Tzou, \textit{{Calder{\'o}n inverse problem with partial data on Riemann surfaces}}, Duke Math. J. \textbf{158} (2011), 83--120.
\bibitem[GT11b]{GT_magnetic} C.~Guillarmou, L.~Tzou, \textit{{Identification of a connection from Cauchy data on a Riemann surface with boundary}}, Geom. Funct. Anal. \textbf{21} (2011), 393--418.
\bibitem[GT13]{GT_survey} C.~Guillarmou, L.~Tzou, \textit{{Survey on Calder{\'o}n inverse problem in 2 dimensions}}, Inside Out II (to appear).
\bibitem[HT11]{HT} B. Haberman, D. Tataru, \textit{{Uniqueness in Calderon's problem with Lipschitz conductivities}}, Duke Math. J. (to appear), arXiv:1108.6068.
\bibitem[H\"a98]{H98} P. H\"ahner, \textit{{A uniqueness theorem for an inverse scattering problem in an exterior domain}}, SIAM J. Math. Anal. \textbf{29} (1998), 1118--1128.
\bibitem[HW06]{HeckWang} H. Heck, J.-N. Wang, \textit{{Stability estimates for the inverse boundary value problem by partial Cauchy data}}, Inverse Problems \textbf{22} (2006), 1787.
\bibitem[HW07]{HeckWang2} H. Heck, J.-N. Wang, \textit{{Optimal stability estimate of the inverse boundary value problem by partial measurements}}, preprint (2007), arXiv:0708.3289.
\bibitem[He99]{Helgason} S.~Helgason, \textit{{The Radon transform}}, 2nd edition, Birkh\"auser, 1999.
\bibitem[HPS12]{HPS} N.~Hyv\"onen, P.~Piiroinen, O.~Seiskari, \emph{{Point measurements for a Neumann-to-Dirichlet map and the Calder{\'o}n problem in the plane}}, preprint (2012), arXiv:1204.0346.
\bibitem[Il12]{Ilmavirta}
J. Ilmavirta, \emph{{Broken ray tomography in the disk}}, preprint (2012), arXiv:1210.4354.
\bibitem[IUY10]{IUY} O.~Imanuvilov, G.~Uhlmann, M.~Yamamoto, \textit{{The Calder{\'o}n problem with partial data in two dimensions}}, J. Amer. Math. Soc. \textbf{23} (2010), 655--691.
\bibitem[IUY11a]{IUY_general} O.~Imanuvilov, G.~Uhlmann, M.~Yamamoto, \textit{{Determination of second-order elliptic operators in two dimensions from partial Cauchy data}}, Proc. Natl. Acad. Sci. USA \textbf{108} (2011), 467--472.
\bibitem[IUY11b]{IUY_disjoint} O.~Imanuvilov, G.~Uhlmann, M.~Yamamoto, \textit{{Inverse boundary value problem by measuring Dirichlet and Neumann data on disjoint sets}}, Inverse Problems \textbf{27} (2011), 085007.
\bibitem[IY12a]{IY_nonsmooth} O. Imanuvilov, M. Yamamoto, \textit{{Inverse boundary value problem for Schr{\"o}dinger equation in two dimensions}}, SIAM J. Math. Anal. \textbf{44} (2012), 1333--1339.
\bibitem[IY12b]{IY_systems} O. Imanuvilov, M. Yamamoto, \textit{{Inverse problem by Cauchy data for an arbitrary sub-boundary for systems of elliptic equations}}, Inverse Problems \textbf{28} (2012), 095015.
\bibitem[IY12c]{IY_3D} O. Imanuvilov, M. Yamamoto, \textit{{Inverse boundary value problem for Schr{\"o}dinger equation in cylindrical domain by partial boundary data}}, preprint (2012), arXiv:1211.1419.
\bibitem[Is88]{I88} V. Isakov, \textit{{On uniqueness of recovery of a discontinuous conductivity coefficient}}, Comm. Pure Appl. Math \textbf{41} (1988), 865--877.
\bibitem[Is07]{I}
V. Isakov, \emph{On uniqueness in the inverse conductivity problem with local data}, Inverse Probl. Imaging \textbf{1} (2007), 95--105.
\bibitem[KKL01]{KKL} A. Katchalov, Y. Kurylev, M. Lassas, \textit{Inverse Boundary Spectral Problems}, Monographs and Surveys in Pure and Applied Mathematics 123, Chapman Hall/CRC, 2001.
\bibitem[KS12]{KSa}
C.E. Kenig, M. Salo, \emph{{The Calder{\'o}n problem with partial data on manifolds and applications}}, preprint (2012), arXiv:1211.1054.
\bibitem[KSU11a]{KSaU}
C.E. Kenig, M. Salo, G. Uhlmann, \emph{{Inverse problems for the anisotropic Maxwell equations}}, Duke Math. J. \textbf{157} (2011), 369--419.
\bibitem[KSU11b]{KSaU_reconstruction}
C.E. Kenig, M. Salo, G. Uhlmann, \emph{{Reconstructions from boundary measurements on admissible manifolds}},
Inverse Probl. Imaging \textbf{5} (2011), 859--877.
\bibitem[KSU07]{KSU} C.E. Kenig, J. Sj\"ostrand, G. Uhlmann,  \textit{The Calder\'on problem with partial data}, Ann. of Math. \textbf{165} (2007), 567--591.
\bibitem[Kn06]{Knudsen}
K. Knudsen, \textit{{The Calder{\'o}n problem with partial data for less smooth conductivities}}, Comm. PDE \textbf{31} (2006), 57--71.
\bibitem[KS07]{KnudsenSalo}
K. Knudsen, M. Salo, \emph{Determining nonsmooth first order terms from partial boundary measurements}, Inverse Probl. Imaging \textbf{1} (2007), 349--369.
\bibitem[KV84]{KV} R. Kohn, M. Vogelius, \textit{{Determining conductivity by boundary measurements}}, Comm. Pure Appl. Math. \textbf{37} (1984), 289--298.
\bibitem[KV85]{KV2} R. Kohn, M. Vogelius, \textit{{Determining conductivity by boundary measurements. II. Interior results}}, Comm. Pure Appl. Math. \textbf{38} (1985), 644--667.
\bibitem[KLU12]{KLU} K.~Krupchyk, M.~Lassas, G.~Uhlmann, \emph{{Inverse problems with partial data for a magnetic Schr{\"o}dinger operator in an infinite slab and on a bounded domain}}, Comm. Math. Phys. \textbf{312} (2012), 87--126.
\bibitem[LO10]{LO1} M.~Lassas, L.~Oksanen, \emph{{Inverse problem for wave equation with sources and observations on disjoint sets}}, Inverse Problems \textbf{26} (2010), 085012.
\bibitem[LO12]{LO2} M.~Lassas, L.~Oksanen, \emph{{Inverse problem for the Riemannian wave equation with Dirichlet data and Neumann data on disjoint sets}}, preprint (2012), arXiv:1208.2105.
\bibitem[LiU10]{LiUhlmann} X.~Li, G.~Uhlmann, \emph{{Inverse problems with partial data in a slab}}, Inverse Probl. Imaging \textbf{4} (2010), 449--462.
\bibitem[Na96]{N_2D}
A.~Nachman, \emph{{Global uniqueness for a two-dimensional inverse boundary value problem}}, Ann. of Math. \textbf{143} (1996), 71--96.
\bibitem[NS10]{NS}
A.~Nachman, B.~Street, \emph{{Reconstruction in the Calder{\'o}n problem with partial data}}, Comm. PDE \textbf{35} (2010), 375--390.
\bibitem[OPS93]{OPS} P. Ola, L. P\"aiv\"arinta, E. Somersalo, \textit{An inverse boundary value problem in electrodynamics}, Duke Math. J. \textbf{70} (1993), 617--653.
\bibitem[ST10]{SaloTzou} M.~Salo, L.~Tzou, \emph{{Inverse problems with partial data for a Dirac system: a Carleman estimate approach}}, Adv. Math. \textbf{225} (2010), 487--513.
\bibitem[SU87]{SU} J.~Sylvester, G.~Uhlmann, \textit{A global uniqueness theorem for an inverse boundary value problem}, Ann. of Math. \textbf{125} (1987), 153--169.
\bibitem[Tz08]{Tzou} L. Tzou, \textit{{Stability estimate for the coefficients of magnetic Schr{\"o}dinger equation from full and partial boundary measurements}}, Comm. PDE \textbf{11} (2008), 1911--1952.
\bibitem[Uh98]{U_ICM} G. Uhlmann, \textit{{Inverse boundary value problems for partial differential equations}}, Doc. Math. J. DMV Extra Volume ICM III (1998), 77--86.
\bibitem[Uh09]{U_IP} G.~Uhlmann, \textit{Electrical impedance tomography and Calder{\'o}n's problem}, Inverse Problems \textbf{25} (2009), 123011.
\bibitem[Zh12]{Zhang} G.~Zhang, \emph{{Uniqueness in the Calder{\'o}n problem with partial data for less smooth conductivities}}, Inverse Problems \textbf{28} (2012), 105008.

\end{thebibliography}

\end{document}